\documentclass{article}

\usepackage{authblk}

\usepackage[utf8]{inputenc}
\usepackage[T1]{fontenc}
\usepackage{textcomp}
\usepackage{amsmath,amssymb,amsthm}
\usepackage{lmodern}
\usepackage[a4paper]{geometry}
\usepackage{xcolor, pict2e}
\usepackage{microtype}
\usepackage{caption}
\usepackage{listings}
\usepackage{multicol}
\usepackage{moreverb}
\usepackage{hyperref}
\hypersetup{pdfstartview=XYZ}
\usepackage{wrapfig}
\usepackage[sans]{dsfont}

\usepackage{hyperref}
\hypersetup{
    colorlinks=true,
    linkcolor=blue,
    citecolor=red,
    urlcolor=blue,
    pdfborder={0 0 0}
}

\usepackage[font=sf, labelfont={sf,bf}, margin=1cm]{caption}

\usepackage{cite}

\newtheorem*{theorem*}{Theorem}
\newtheorem{theorem}{Theorem}[subsection]
\newtheorem{proposition}{Proposition}[subsection]
\newtheorem{lemma}{Lemma}[subsection]

\newtheorem{question}{Question}

\newtheorem{theoremm}{Theorem}

\theoremstyle{remark}
\newtheorem{remark}{Remark}[subsection]
\newtheorem{assumption}{Assumption}



\usepackage{tikz}

\title{Thick points of random walk and the Gaussian free field}
\author{Antoine Jego\thanks{Supported by the EPSRC grant EP/L016516/1 for the University of Cambridge Centre for Doctoral Training, the Cambridge Centre for Analysis. E-mail address: \href{mailto:apfj2@cam.ac.uk}{apfj2@cam.ac.uk}}
}
\affil{University of Cambridge}
\date {}

\setcounter{secnumdepth}{4}


\newcommand{\R}{\mathbb{R}}
\newcommand{\N}{\mathbb{N}}
\newcommand{\Z}{\mathbb{Z}}
\newcommand{\C}{\mathbb{C}}
\newcommand{\Bc}{\mathcal{B}}
\newcommand{\Cc}{\mathcal{C}}
\newcommand{\Mc}{\mathcal{M}}
\newcommand{\Nc}{\mathcal{N}}

\newcommand{\expect}{\mathbb{E}}
\newcommand{\Expect}[1]{\mathbb{E} \left[ #1 \right] }
\newcommand{\EXPECT}[2]{\mathbb{E}_{#1} \left[ #2 \right] }
\newcommand{\prob}{\mathbb{P}}
\newcommand{\Prob}[1]{\mathbb{P} \left( #1 \right) }
\newcommand{\PROB}[2]{\mathbb{P}_{#1} \left( #2 \right) }

\newcommand{\EXPECTprod}[2]{\mathbb{E}_{#1} \otimes \mathds{E} \left[ #2 \right] }
\newcommand{\PROBprod}[2]{\mathbb{P}_{#1} \otimes \mathds{P} \left( #2 \right) }

\newcommand{\abs}[1]{\left\vert #1 \right\vert}
\newcommand{\norme}[1]{\left\| #1 \right\| }
\newcommand{\scalar}[1]{\left\langle #1 \right\rangle }
\newcommand{\floor}[1]{\left\lfloor #1 \right\rfloor}
\newcommand{\indic}[1]{ \mathbf{1}_{ \left\{ #1 \right\} } }
\newcommand{\eps}{\varepsilon}


\begin{document}

\renewcommand{\theparagraph}{\thesubsection.\arabic{paragraph}} 

\maketitle

\begin{abstract}
We consider the thick points of random walk, i.e. points where the local time is a fraction of the maximum. In two dimensions, we answer a question of \cite{dembo2001} and compute the number of thick points of planar random walk, assuming that the increments are symmetric and have a finite moment of order two. The proof provides a streamlined argument based on the connection to the Gaussian free field and works in a very general setting including isoradial graphs. In higher dimensions, we study the scaling limit of the set of thick points. In particular, we show that the rescaled number of thick points converges to a nondegenerate random variable and that the centred maximum of the local times converges to a randomly shifted Gumbel distribution.
\end{abstract}


\tableofcontents

\section{Results}\label{sec: Introduction and Results}

For $d \geq 2$, consider a continuous time simple random walk $(Y_t)_{t \geq 0}$ on $\Z^d$ with rate 1. Let us denote $\prob_x$ the law of $(Y_t)_{t \geq 0}$ starting from $x$ and $\expect_x$ the associated expectation. Defining $V_N = \{-N, \dots, N\}^d$, we denote $\tau_N$ the first exit time of $V_N$ and $\left(\ell_x^t, x \in \Z^d, t \geq 0 \right)$ the local times defined by:
\begin{equation}\label{eq: def tau_N and local times}
\tau_N := \inf \left\{ t \geq 0, Y_t \notin V_N \right\} \mathrm{~and~} \forall x \in V_N, \forall t\geq 0, 
\ell_x^t := \int_0^{t} \indic{Y_s = x} ds.
\end{equation}

In 1960, Erd\H{o}s and Taylor \cite{erdos_taylor1960} studied the behaviour of the local time of the most frequently visited site. By translating their work to our context of continuous time random walk, they proved that
\begin{align}
\mathrm{if~} d=2,&~ \frac{1}{\pi} \leq \liminf_{N \rightarrow \infty} \frac{\sup_{x \in V_N} \ell_x^{\tau_N}}{(\log N)^2} \leq \limsup_{N \rightarrow \infty} \frac{\sup_{x \in V_N} \ell_x^{\tau_N}}{(\log N)^2} \leq \frac{4}{\pi} \quad \quad \prob_0 \mathrm{-a.s.}, \nonumber \\
\mathrm{if~} d \geq 3,&~ \lim_{N \rightarrow \infty} \frac{\sup_{x \in V_N} \ell_x^{\tau_N}}{\log N} = 2 \EXPECT{0}{ \ell_0^\infty } \quad \quad \prob_0 \mathrm{-a.s.} \label{eq: d=3 erdos-taylor result}
\end{align}
and conjectured that the limit also exists in dimension two and is equal to the upper bound. This conjecture was proved forty years later in a landmark paper \cite{dembo2001}. Estimates on the number of thick points, which are the points where the local times are larger than a fraction of the maximum, are also given in this paper. Briefly, their proof establishes the analogous results for the thick points of occupation measure of planar Brownian motion; taking in particular advantages of symmetries such as rotational invariance and certain exact computations on Brownian excursions. The discrete case is then deduced from the Brownian case through strong coupling/KMT arguments. This method requires all the moments of the increments to be bounded but the authors suspected that only finite second moments are needed. Later, the article \cite{rosen2006} showed that the paper \cite{dembo2001} can be entirely rewritten in terms of random walk giving a proof without using Brownian motion. The strategy of \cite{rosen2006} has then been refined in \cite{BassRosen2007} to treat the case of random walks on $\Z^2$ with symmetric increments having finite moment of order $3+\eps$. A crucial aspect of this latter article consists in controlling the jumps over discs. Such a control is achieved by developing Harnack inequalities requiring further assumptions on the walk (Condition A of \cite{BassRosen2007}).

This paper has two purposes. Firstly, we exploit the links between the local times and the Gaussian free field (GFF) provided by Dynkin-type isomorphisms to give a simpler and more robust proof of the two-dimensional result. The proof works in a very general setting (Theorem \ref{th: frequent points, general case}). In particular, we weaken the assumptions of \cite{BassRosen2007} answering the question of \cite{dembo2001} about walks with only finite second moments and we also treat the case of random walks on isoradial graphs.
Secondly, we obtain more precise results in dimension $d \geq 3$. Namely, we show that the field $\left\{ \ell_x^{\tau_N}, x \in V_N \right\}$ behaves like the field composed of i.i.d. exponential variables with mean $\EXPECT{0}{\ell_0^\infty}$ located at each site visited by the walk. In particular, we show that the centred supremum of the local times as well as the rescaled number of thick points converge to nondegenerate random variables.

We first state two results for the planar case. Both are in fact corollaries of a more general theorem (Theorem \ref{th: frequent points, general case}) which will be stated later. We will then present the result in dimension $d \geq 3$.

\subsection{Dimension two}\label{subsec: dim 2}

Consider $Y_t = S_{N_t}, t \geq 0,$ a continuous time random walk on $\Z^2$ starting from the origin where $S_n = \sum_{i = 1}^n X_i, n \geq 0,$ is the jump process with i.i.d. increments $X_i \in \Z^2$ and $(N_t)_{t \geq 0}$ is an independent Poisson process of parameter $1$. As before, we consider the square $V_N$ of side length $2N+1$, the first exit time $\tau_N$ of $V_N$ and the local times $\left( \ell_x^t,x \in \Z^2, t \geq 0 \right)$ defined as in \eqref{eq: def tau_N and local times}. For any thickness parameter $0 \leq a \leq 1$, we call $\Mc_N(a)$ the set of $a$-thick points
\[
\Mc_N(a) := \left\{ x \in V_N: \ell_x^{\tau_N} \geq \frac{2}{\pi \sqrt{\det \mathcal{G}}}a ( \log N)^2  \right\}
\]
where $\mathcal{G}$ is defined below.
Then we have the following:

\begin{theorem}\label{thm: lattices}
Assume that the law of the increments is symmetric (i.e. $-X \stackrel{d}{=} X$), with a finite variance and denote $\mathcal{G} = \Expect{XX'}$ the covariance matrix of the increments. Then we have the following two a.s. limits:
\[
\lim_{N \rightarrow \infty} \frac{\max_{x \in V_N} \ell_x^{\tau_N}}{(\log N)^2} = \frac{2}{\pi \sqrt{\det \mathcal{G}}}
\mathrm{~and~}
\forall a \in [0,1),
\lim_{N \rightarrow \infty} \frac{\log \abs{\Mc_N(a)}}{\log N} = 2(1 - a).
\]
\end{theorem}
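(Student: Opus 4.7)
The plan is to reduce the theorem to the analogous statement for the discrete Gaussian free field (GFF) on $V_N$ via a Dynkin-type isomorphism, and then exploit the well-developed theory of log-correlated Gaussian fields. Under the symmetry and finite variance hypotheses, the local central limit theorem yields the Green's function asymptotic
$$G_{V_N}(x,x)=\frac{1}{\pi\sqrt{\det\mathcal{G}}}\log N+O(1)$$
uniformly for $x$ in the bulk. The zero-boundary GFF $\phi$ with covariance $G_{V_N}$ is then a log-correlated Gaussian field with pointwise variance $\sigma_N^2\sim\frac{1}{\pi\sqrt{\det\mathcal{G}}}\log N$; consequently $\max\phi\sim 2\sigma_N\sqrt{\log N}$, so $\max\tfrac12\phi^2\sim\frac{2}{\pi\sqrt{\det\mathcal{G}}}(\log N)^2$, and the cardinality of its $a$-thick points is $N^{2(1-a)+o(1)}$ almost surely. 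These match the target constants in the theorem exactly.

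The next step is to invoke a Dynkin / second Ray-Knight-type isomorphism (in the spirit of Eisenbaum--Kaspi--Marcus--Rosen--Shi, suitably adapted to killing at $\partial V_N$): coupling an independent GFF $\phi$ with $\ell^{\tau_N}$ yields an identity in law of the form
$$\ell_x^{\tau_N}+\tfrac12\phi_x^2\stackrel{d}{=}\tfrac12(\phi_x+s_x)^2,$$
where $s$ is an explicit deterministic shift bounded on compact subsets of the bulk. Since $\ell^{\tau_N}\ge 0$, every $a$-thick point of $\ell^{\tau_N}$ is an $a$-thick point of $\ell^{\tau_N}+\tfrac12\phi^2$, and so $\abs{\Mc_N(a)}$ is stochastically dominated by the number of $a$-thick points of $\tfrac12(\phi+s)^2$. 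A direct first moment computation using the Gaussian tail together with the Green's function asymptotic then yields $\Expect{\abs{\Mc_N(a)}}\le N^{2(1-a)+o(1)}$, and a Borel--Cantelli argument along a sparse sequence of scales, together with the near-monotonicity of $\tau_N$ in $N$, gives $\limsup_N(\log N)^{-1}\log\abs{\Mc_N(a)}\le 2(1-a)$ almost surely; the case $a=1$ provides the upper bound on the maximum.

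The lower bound is the substantive part. Here the idea is to adapt the truncated multi-scale second moment argument developed for thick points of the two-dimensional GFF (Daviaud, Bolthausen--Deuschel--Giacomin) to the sum field $\ell^{\tau_N}+\tfrac12\phi^2$, using the isomorphism to replace it by the Gaussian field $\tfrac12(\phi+s)^2$ whose joint law on nested dyadic subboxes admits a tractable branching-random-walk decomposition and a clean Cameron--Martin tilt at the height one is trying to reach. This produces a matching lower bound on the number of $a$-thick points of $\ell^{\tau_N}+\tfrac12\phi^2$. The main obstacle is to transfer this back to $\ell^{\tau_N}$ alone, i.e.\ to ensure that the $\tfrac12\phi^2$ term cannot itself account for the bulk of the thickness on the left-hand side. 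This is done by applying the GFF first-moment bound at a slightly lower threshold to show that the set on which $\tfrac12\phi^2$ is anywhere near the thick-point level is too small, and combining with a pigeonhole / conditional decomposition; making this precise uniformly in $a\in[0,1)$ and upgrading convergence in probability to the almost-sure statement is where most of the technical work resides.
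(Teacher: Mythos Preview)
Your high-level plan (isomorphism to the GFF, first-moment upper bound, truncated second moment for the lower bound) is the right one and matches the paper. But the lower-bound part, as you describe it, contains a real gap, and the form of the isomorphism you invoke is not quite right.

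\textbf{The isomorphism.} There is no identity in law of the type $\ell_x^{\tau_N}+\tfrac12\phi_x^2 \stackrel{d}{=}\tfrac12(\phi_x+s_x)^2$ with the \emph{same} zero-boundary GFF on both sides and a bounded deterministic shift. Eisenbaum's isomorphism, which the paper uses, has a constant $s>0$ and a Radon--Nikodym factor $1+\phi_N(x_0)/s$ on the pure-GFF side; the generalised second Ray--Knight theorem gives a genuine identity in law, but with a GFF pinned at $x_0$ (not the Dirichlet GFF) and with the walk stopped at an inverse local time, not at $\tau_N$. The paper discusses exactly this and explains why Eisenbaum is preferable: the pinning adds a global $\sqrt{\log N}$ noise that ruins the second-moment computation unless removed by hand.

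\textbf{The transfer step.} More seriously, your plan is to run the truncated second moment on the thick points of $\ell+\tfrac12\phi^2$ and then ``subtract'' $\tfrac12\phi^2$. The naive pigeonhole you sketch does not close: if $\ell_x+\tfrac12\phi_x^2\ge 2ag(\log N)^2$ then either $\ell_x\ge 2(a-\delta)g(\log N)^2$ or $\tfrac12\phi_x^2\ge 2\delta g(\log N)^2$, but the latter set has size $N^{2-2\delta+o(1)}$, which dominates $N^{2(1-a)}$ for every $\delta<a$. And the isomorphism only gives you access to functionals of the \emph{sum} $\ell+\tfrac12(\phi+s)^2$, so you cannot simply condition on the $\phi$-part inside the transformed measure. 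The paper avoids this entirely by running the truncated second moment \emph{directly} on thick points of $\ell$: the good event at $x$ constrains the circle averages of $\ell$ \emph{and} of $\tfrac12(\phi+s)^2$ (the latter with a vanishing threshold $\varepsilon_N(\log N)^2$), and the isomorphism is used only as a one-sided tool, via $\ell\le \ell+\tfrac12(\phi+s)^2$, to \emph{upper bound} the bad-event and two-point probabilities. This sidesteps the subtraction problem completely.

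\textbf{The bootstrap.} A second ingredient you are missing is the paper's Lemma~\ref{lem: from not too small proba to proba 1}: the Radon--Nikodym factor and the scale cutoff $\eta$ mean the second-moment argument only yields $\prob_{x_0}(|\Mc_N(a)|\ge N^{2(1-a)-\varepsilon})\ge N^{-o(1)}$, not a probability bounded away from zero. The paper then bootstraps this sub-polynomial bound to an almost-sure statement by cutting the walk into many independent sub-walks in balls of radius $K_N=N^{1-o(1)}$; this is where Assumption~\ref{ass4} on overshoots is used, and it is the only place the finite-variance hypothesis on jumps enters beyond the Green-function asymptotic. (Incidentally, under a bare second-moment assumption the potential-kernel error is $o(\log|x|)$, not $O(1)$; this is harmless but worth noting.)
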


This theorem answers a question asked in the last section of \cite{dembo2001} with the additional assumption of symmetry. The assumption of symmetry is needed in our approach since otherwise we cannot define an associated GFF.

\bigbreak
Our approach is sufficiently general that it can handle random walks with a very different flavour; for instance we discuss here the case of random walk on isoradial graphs.

We recall briefly the definitions and introduce some notation (we use the same one as \cite{Chelkak_Smirnov2011}).
Let $\Gamma = (V,E)$ be any connected infinite isoradial graph, with common radius 1, i.e. $\Gamma$ is embedded in $\C$ and each face is inscribed into a circle of radius 1.
Note that if $x, y \in V$ are adjacent then $x$ and $y$, together with the centres of the two faces adjacent to the edge $\{x,y\}$, form a rhombus. We denote by $2 \theta_{x,y}$ the interior angle of this rhombus at $x$ (or at $y$). See Figure \ref{fig: rhombic half-angle} for an example. For instance, the square (resp. triangular, hexagonal, etc) lattice is an isoradial graph with $\theta_{x,y} = \pi/4$ (resp. $\pi / 6, \pi / 3$, etc) for all $x \sim y$. We assume the following ellipticity condition:
\[
\exists \eta \in \left( 0, \frac{\pi}{4} \right), \forall x \sim y, \theta_{x,y} \in \left( \eta, \frac{\pi}{2} - \eta \right).
\]

\begin{figure}
\centering
\begin{tikzpicture}[scale=1.3]
\coordinate (A) at (1,1);
\coordinate (B) at (0.357,2.766);
\coordinate (C) at (1.643,2.766);
\coordinate (D) at (2.53,1.3);
\tikzset{shift={(A)}}
\coordinate (A1) at (-28:1cm);
\coordinate (A2) at (50:1cm);
\coordinate (A3) at (90:1cm);
\coordinate (A4) at (130:1cm);
\coordinate (A5) at (250:1cm);
\tikzset{shift={(B)}}
\coordinate (B1) at (180:1cm);
\coordinate (B2) at (50:1cm);
\tikzset{shift={(D)}}
\coordinate (D1) at (90:1cm);
\coordinate (D2) at (45:1cm);
\coordinate (D3) at (270:1cm);
\coordinate (D4) at (320:1cm);
\tikzset{shift={(D1)}}
\coordinate (E) at (45:1cm);
\tikzset{shift={(E)}}
\coordinate (E1) at (152:1cm);
\coordinate (G1) at (60:1cm);

\fill (G1) circle (0.06cm);
\fill (D3) circle (0.06cm);
\fill (D4) circle (0.06cm);
\fill (A5) circle (0.06cm);

\fill (A1) circle (0.06cm);
\fill (A2) circle (0.06cm);
\fill (A3) circle (0.06cm);
\fill (A4) circle (0.06cm);
\fill (B1) circle (0.06cm);
\fill (B2) circle (0.06cm);
\fill (D1) circle (0.06cm);
\fill (D2) circle (0.06cm);
\fill (E1) circle (0.06cm);
\fill[gray] (B) circle (0.06cm);
\fill[gray] (C) circle (0.06cm);

\draw (A1) -- (A2) -- (A3) -- (A4) -- (A5) -- (A1);
\draw (B1) -- (B2) -- (A3) -- (A4) -- (B1);
\draw (D1) -- (A2) -- (A1) -- (D3) -- (D4) -- (D2) -- (D1);
\draw (B2) -- (E1) -- (D1);
\draw (D2) -- (G1) -- (E1);

\draw[dashed] (B) -- (B2) -- (C) -- (A3) -- (B);

\draw[dotted] (A) circle (1cm);
\draw[dotted] (B) circle (1cm);
\draw[dotted] (C) circle (1cm);
\draw[dotted] (D) circle (1cm);
\draw[dotted] (E) circle (1cm);

\draw (A3) node [below] {$x$};
\draw (B2) node [above] {$y$};
\tikzset{shift={(B2)}}
\draw (0,-0.7) arc (-90:-130:0.7);
\draw (-0.3,-0.8) node {$\theta_{x,y}$};

\end{tikzpicture}
\caption{Isoradial graph and rhombic half-angle. The solid lines represent the edges of the graph. Each face is inscribed into a dotted circle of radius 1.
The centres of the two faces adjacent to the edge $\{x,y\}$ are in grey.}\label{fig: rhombic half-angle}
\end{figure}

Define $\forall x \sim y \in V$ the conductance $c_{x,y} = \tan(\theta_{x,y})$ and let $(Y_t)_{t \geq 0}$ be a Markov jump process with conductances $(c_e)_{e \in E}$. $Y$ is a continuous time walk which waits an exponential with mean $1/ \sum_{y \sim x} c_{x,y}$ time in each vertex $x$ and then jumps from $x$ to $y$ with probability $c_{x,y} / \sum_{z \sim x} c_{x,z}$. Take a starting point $x_0 \in V$ and denoting $d_\Gamma$ the graph distance we define for all $N \in \N,$
\[
V_N := \{x \in V: d_\Gamma(x,x_0) \leq N \}
\]
and as before (equation \eqref{eq: def tau_N and local times}), we consider the first exit time $\tau_N$ of $V_N$ and the local times.
We will denote $\prob_x$ the law of the walk $(Y_t)_{t \geq 0}$ starting from $x \in V$ and $\expect_x$ the associated expectation.

As confirmed by the theorem below, a sensible definition of $a$-thick points is given by
\[
\Mc_N(a) := \left\{ x \in V_N: \ell_x^{\tau_N} \geq \frac{a}{\pi} ( \log N)^2  \right\}.
\]

\begin{theorem}\label{th: frequent points, isoradial graphs}
We have the following two $\prob_{x_0}$-a.s. limits:
\[
\lim_{N \rightarrow \infty} \frac{\max_{x \in V_N} \ell_x^{\tau_N}}{(\log N)^2} = \frac{1}{\pi}
\mathrm{~and~}
\forall a \in [0,1),
\lim_{N \rightarrow \infty} \frac{\log \abs{\Mc_N(a)}}{\log N} = 2(1 - a).
\]
\end{theorem}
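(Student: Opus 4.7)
The strategy is to deduce Theorem \ref{th: frequent points, isoradial graphs} from the general Theorem \ref{th: frequent points, general case} that the introduction promises. That theorem extracts the maximum and the thick-point asymptotics of $(\ell_x^{\tau_N})$ from three inputs: (i) a sharp on-diagonal asymptotic $G_N(x,x) = g\log N + O(1)$ for the Green's function of the walk killed on leaving $V_N$, (ii) a matching off-diagonal estimate $G_N(x,y) = g\log(N/|x-y|) + O(1)$ uniformly in $x,y$ at macroscopic graph-distance from $\partial V_N$, and (iii) Harnack and exit-time regularity strong enough to run a multi-scale decomposition of the associated Gaussian free field via the second Ray--Knight isomorphism.

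For the isoradial walk with conductances $c_{x,y} = \tan(\theta_{x,y})$, the generator is exactly Kenyon's isoradial Laplacian. Using Kenyon's integral formula for the full-plane potential kernel together with its refinements (e.g.\ Chelkak--Smirnov), I would show that the killed Green's function satisfies
\[
G_N(x,x) = \tfrac{1}{2\pi}\log N + O(1), \qquad G_N(x,y) = \tfrac{1}{2\pi}\log\frac{N}{|x-y|\vee 1} + O(1),
\]
uniformly in $x,y$ at graph-distance $\geq \eta' N$ from $\partial V_N$. The constant $1/(2\pi)$ is the one that falls out of the rhombus weights in Kenyon's formula. Uniformity of the $O(1)$ errors is supplied by the ellipticity condition $\theta_{x,y}\in(\eta,\pi/2-\eta)$, which gives uniformly bounded conductances, bounded graph degree, elliptic-type Harnack estimates, and the required moment control on $\tau_N$.

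With these inputs in hand, the second Ray--Knight isomorphism identifies $(\ell_x^{\tau_N})$ with (half) the square of a GFF $\varphi$ on $V_N$ with Dirichlet boundary condition and covariance $G_N$, up to an explicit lower-order shift. Since $\var(\varphi_x) = \tfrac{1}{2\pi}\log N + O(1)$, the field $\varphi$ is log-correlated with essentially the same small-scale structure as the continuum planar GFF, and the multi-scale thick-point analysis packaged into Theorem \ref{th: frequent points, general case}, applied with $g = 1/(2\pi)$, yields
\[
\lim_{N\to\infty}\frac{\max_{x\in V_N}\ell_x^{\tau_N}}{(\log N)^2} = 2g = \frac{1}{\pi}, \qquad \lim_{N\to\infty}\frac{\log|\Mc_N(a)|}{\log N} = 2(1-a),
\]
which is exactly the statement of Theorem \ref{th: frequent points, isoradial graphs}.

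The main obstacle is establishing the uniform Green's function asymptotics on an arbitrary isoradial graph: unlike on $\Z^2$, there is no translation invariance, no direct Fourier representation, and $V_N$ is a ball for the graph distance rather than a Euclidean square. I would handle this by combining Kenyon's integral formula for the full-plane potential kernel with a harmonic-measure/Poisson-kernel comparison between $V_N$ and its Euclidean neighbourhood, using ellipticity to control boundary oscillations and to compare graph-distance with Euclidean distance. Once these Green's function estimates are in place, feeding them into Theorem \ref{th: frequent points, general case} is essentially bookkeeping and no further probabilistic input is required.
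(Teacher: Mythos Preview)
Your reduction strategy is correct and matches the paper: Theorem~\ref{th: frequent points, isoradial graphs} is deduced from the general Theorem~\ref{th: frequent points, general case} by verifying the Green's function hypotheses with $g=1/(2\pi)$, and the paper, like you, appeals directly to Kenyon and Chelkak--Smirnov for the potential-kernel asymptotics. The verification is actually lighter than you suggest: since the isoradial walk is nearest-neighbour, the jump-control assumption is trivial, and the paper simply quotes the cited references for \eqref{eq: statement lemma isoradial graphs 1}--\eqref{eq:lem_16c2} rather than building a harmonic-measure comparison from scratch.

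There is, however, a substantive error in your description of the mechanism inside Theorem~\ref{th: frequent points, general case}. The paper does \emph{not} use the generalised second Ray--Knight isomorphism, and explains in Remark~\ref{rem: eisenbaum's isomorphism} why that choice would cause trouble: second Ray--Knight forces you to stop the walk at an inverse local time $\tau_u$ at $x_0$ rather than at $\tau_N$, and the resulting GFF is pinned at $x_0$ rather than having Dirichlet boundary on $\partial V_N$. That pinning introduces a global Gaussian noise of order $\sqrt{\log N}$ which ruins the second-moment computation unless removed by hand. The paper uses Eisenbaum's isomorphism (Theorem~\ref{th: Eisenbaum isomorphism}) instead; this keeps the Dirichlet GFF with covariance $G_N$ at the cost of a change of measure $(1+\phi_N(x_0)/s)$, which is controlled crudely via Lemma~\ref{lem: upper bound event Eisenbaum}. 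Your sentence ``identifies $(\ell_x^{\tau_N})$ with (half) the square of a GFF~$\varphi$ on $V_N$ with Dirichlet boundary condition\ldots up to an explicit lower-order shift'' is therefore not an accurate description of either isomorphism. This does not invalidate your deduction of Theorem~\ref{th: frequent points, isoradial graphs} from the general theorem, but it misrepresents the key probabilistic input.
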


\begin{remark} Theorems \ref{thm: lattices} and \ref{th: frequent points, isoradial graphs} also hold when we consider the walk stopped at a deterministic time, $N^2$ say, rather than the first exit time $\tau_N$ of $V_N$, since
\[
\lim_{N \rightarrow \infty} \frac{\log \tau_N}{\log N} = 2 \quad \quad \mathrm{a.s.}
\]
(easy to check but can also be seen from these two theorems). They also hold if we consider discrete time random walks rather than continuous time random walks. In that case, we have to multiply the discrete local times by the average time the continuous time walk stays in a given vertex before its first jump. See Remark \ref{rem:discrete time versus continuous time} ending Section \ref{subsec: dim 3} for a short discussion about this.

Let us just confirm that Theorems \ref{thm: lattices} and \ref{th: frequent points, isoradial graphs} are coherent: in the square lattice case, the average time between successive jumps by the walk $Y$ of Theorem \ref{th: frequent points, isoradial graphs} is $1/4$ rather than $1$.
We also mention that it is plausible that the arguments of \cite{rosen2006} can be adapted to show Theorem \ref{th: frequent points, isoradial graphs}. However, we include it here since it is a straightforward consequence of our approach (Theorem \ref{th: frequent points, general case}).
\end{remark}

\subsection{Higher dimensions}\label{subsec: dim 3}

We now come back to the setting of the beginning of Section \ref{sec: Introduction and Results} for $d \geq 3$ and we denote $g := \EXPECT{0}{\ell_0^\infty}$. In this section, the walk starts at the origin of $\Z^d$.

We describe thick points through a more precise encoding by considering for $a \in [0,1]$ the point measure:
\begin{equation}\label{eq: dim 3 def nu^a_N}
\nu^a_N := \frac{1}{N^{2(1-a)}} \sum_{x \in V_N} \delta_{ \left( x/N , \ell_x^{\tau_N} - 2g a \log N \right) }.
\end{equation}
Let us emphasise that the normalisation factor is equal to 1 when $a = 1$ and that
$\nu_N^a$ is viewed as a random measure on $[-1,1]^d \times \R$. We compare the thick points of random walk with the thick points of i.i.d. exponential random variables with mean $g$ located at each site visited by the walk. More precisely, we denote $\Mc_N(0) := \left\{ x \in V_N: \ell_x^{\tau_N} > 0 \right\}$ and taking $E_x, x \in \Z^d,$ i.i.d. exponential variables with mean $g$ independent of $\Mc_N(0)$, we define
\[
\mu^a_N := \frac{1}{N^{2(1-a)}} \sum_{x \in \Mc_N(0)} \delta_{ \left( x/N , E_x - 2g a \log N \right) }.
\]

We finally denote by $\tau$ the first exit time of $[-1,1]^d$ of Brownian motion starting at the origin and by
$\mu_{\mathrm{occ}}$ the occupation measure of Brownian motion starting at the origin and killed at $\tau$.
Then we have:

\begin{theorem}\label{th: dim 3 convergence measures}
For all $a \in [0,1]$ there exists a random Borel measure $\nu^a$ on $[-1,1]^d \times \R$ such that, with respect to the topology of vague convergence of measures on $[-1,1]^d \times \R$ (on $[-1,1]^d \times (0,\infty)$ if $a=0$), we have:
\[
\lim_{N \rightarrow \infty} \nu^a_N = \lim_{N \rightarrow \infty} \mu^a_N = \nu^a \mathrm{~in~law.}
\]
Moreover, for all $a \in [0,1)$ the distribution of $\nu^a$ does not depend on $a$ and
\begin{equation}\label{eq:thm measure subcritical}
\nu^a(dx,d\ell) \overset{\mathrm{(d)}}{=} \frac{1}{g} \mu_{\mathrm{occ}}(dx) \otimes e^{-\ell/g} \frac{d \ell}{g}.
\end{equation}
At criticality, $\nu^1$ is a Poisson point process:
\begin{equation}\label{eq:thm measure critical}
\nu^1 \overset{\mathrm{(d)}}{=} \mathrm{PPP} \left( \frac{1}{g} \mu_{\mathrm{occ}}(dx) \otimes e^{-\ell/g} \frac{d \ell}{g} \right).
\end{equation}
\end{theorem}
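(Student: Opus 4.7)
The plan is to prove vague convergence in law via convergence of the Laplace functionals $L_N^\nu(f) := \Expect{e^{-\int f\, d\nu^a_N}}$ and $L_N^\mu(f) := \Expect{e^{-\int f\, d\mu^a_N}}$ to a common limit, for every compactly supported continuous $f \geq 0$ on $[-1,1]^d \times \R$ (respectively $[-1,1]^d \times (0,\infty)$ when $a=0$). I would handle $\mu^a_N$ first: conditioning on the walk, the independence of $\{E_x\}_{x \in \Mc_N(0)}$ factorizes the Laplace functional, so
\[
L_N^\mu(f) = \Expect{ \prod_{x \in \Mc_N(0)} h_N(x/N) }, \qquad h_N(y) := \Expect{\exp(-N^{-2(1-a)} f(y, E - 2ga \log N))},
\]
with $E \sim \mathrm{Exp}(1/g)$. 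A direct computation, using that $f$ has compact support in the $\ell$ variable, gives $h_N(y) = 1 - N^{-2} \Phi(y) + o(N^{-2})$ with $\Phi(y) = \int f(y,\ell)\, g^{-1} e^{-\ell/g}\, d\ell$ for $a \in [0,1)$ and $\Phi(y) = \int (1 - e^{-f(y,\ell)})\, g^{-1} e^{-\ell/g}\, d\ell$ for $a = 1$; in the subcritical range, the Taylor expansion of $1 - e^{-\cdot}$ is valid because each contribution is suppressed by $N^{-2(1-a)}$. Taking logarithms reduces the problem to
\[
\frac{1}{N^2} \sum_{x \in \Mc_N(0)} \Phi(x/N) \overset{\mathrm{law}}{\longrightarrow} \frac{1}{g} \int \Phi \, d\mu_{\mathrm{occ}},
\]
which in turn follows from Donsker's invariance principle together with the observation that each bulk visited site accrues expected local time close to $g$: thus the range measure asymptotically equals $g^{-1}$ times the walk's occupation measure, with the error controlled by second-moment estimates using the Green's function.

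To transfer the conclusion to $\nu^a_N$, the key input is that in continuous time with unit-rate jumps, the strong Markov property gives a clean representation: the number of visits to $x$ before $\tau_N$ is geometric with success probability $p_x^N := \PROB{x}{\tau_N < T_x^+}$, and each visit contributes an independent $\mathrm{Exp}(1)$ holding time, so conditional on $\{x \text{ visited}\}$ the local time $\ell_x^{\tau_N}$ is \emph{exactly} $\mathrm{Exp}(p_x^N)$. By transience, $p_x^N = g^{-1}(1 + o(1))$ uniformly for $x$ at macroscopic distance from $\partial V_N$. For the joint law at $k$ distinct bulk points I would iterate the strong Markov property at successive first-visit times: conditional on the order of first visits, the residual local times decompose into excursion sums that are asymptotically independent, with coupling error to $k$ independent $\mathrm{Exp}(1/g)$ variables bounded by pairwise Green's function terms $G_{V_N}(x_i,x_j) = O(|x_i - x_j|^{2-d})$, which vanish when $d \geq 3$ for macroscopically separated points. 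Plugging this into the Laplace functional of $\nu^a_N$ shows $L_N^\nu(f)$ has the same limit as $L_N^\mu(f)$.

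The main obstacle is the Poisson case $a = 1$, where any residual covariance between thick points at macroscopic separation would destroy the limit. Concretely, the key estimate to establish is
\[
\Prob{\ell_{x_1}^{\tau_N} > 2g\log N + \ell_1, \dots, \ell_{x_k}^{\tau_N} > 2g\log N + \ell_k} = (1 + o(1)) \prod_{i=1}^k \Prob{x_i \text{ visited}} \cdot N^{-2k} \prod_i e^{-\ell_i/g},
\]
uniformly over macroscopically separated $x_i$. This should follow from an excursion-theoretic decomposition combined with sharp hitting-probability bounds in $d \geq 3$: each excursion from $x_i$ has only an $O(|x_i - x_j|^{2-d})$ chance of reaching $x_j$, which yields the factorization with an acceptable error. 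Once the factorial moments are under control, the method of moments identifies $\nu^1$ as the Cox point process $\mathrm{PPP}(g^{-1} \mu_{\mathrm{occ}} \otimes g^{-1} e^{-\ell/g} d\ell)$, giving \eqref{eq:thm measure critical}. The subcritical cases $a \in [0,1)$ are easier: the limit is a random absolutely continuous measure, so a conditional second moment argument given $\mu_{\mathrm{occ}}$ yields convergence in probability of $\int f\, d\nu^a_N$ to the deterministic value $g^{-1} \int f(x,\ell)\, \mu_{\mathrm{occ}}(dx)\, g^{-1} e^{-\ell/g}\, d\ell$, which gives \eqref{eq:thm measure subcritical}.
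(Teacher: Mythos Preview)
Your Laplace-functional treatment of $\mu_N^a$ is a clean alternative to the paper's direct moment computation and would work: the factorisation over independent $E_x$'s, the expansion of $h_N$, and the reduction to convergence of the normalised range measure are all sound (the last step is essentially the $a=0$ case of the paper's Proposition, but can also be obtained from classical range-of-random-walk results). For $\nu_N^a$ you end up, as the paper does, computing joint tail probabilities for $\ell_{x_1}^{\tau_N},\dots,\ell_{x_k}^{\tau_N}$ and invoking the method of (factorial) moments, so the two arguments are closer than they first appear.

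There is, however, a genuine gap in your treatment of the critical case $a=1$. Your key factorisation estimate is stated ``uniformly over macroscopically separated $x_i$'', but the $k$-th factorial moment of $\nu_N^1(A\times T)$ is a sum over \emph{all} distinct $k$-tuples in $A_N$, including tuples with $|x_i-x_j|$ bounded. At $a=1$ there is no factor $N^{-2(1-a)}$ to suppress these configurations: the number of $k$-tuples containing a pair at distance $O(r_N)$ is comparable to $r_N^d$ times the number of well-separated $(k-1)$-tuples, and since the latter contributes $O(1)$ to the moment, the close-pair contribution is $O(r_N^d)\cdot\PROB{0}{\ell_x^{\tau_N},\ell_y^{\tau_N}>2g\log N}$ for nearby $x\neq y$. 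Your bound $G_N(x,y)=O(|x-y|^{2-d})$ gives nothing here because $|x-y|$ is bounded. What is needed is the paper's Lemma~\ref{lem: dim 3 two close points are not both thick}: for \emph{any} $x\neq y$ (even neighbours), the probability that both are $1$-thick is at most $N^{-2-\varepsilon}$ for some $\varepsilon>0$. This is a genuinely dynamical statement---it rests on the fact that $\PROB{x}{\tau_y<\infty}<1$ uniformly in $x\neq y$, so the excursion count between $x$ and $y$ has a strictly sub-critical geometric tail, which after a careful summation beats the naive $N^{-2}$ bound. Without this ingredient the critical-case moments are not under control and the identification of $\nu^1$ as a Poisson point process does not go through.

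A secondary issue: your ``conditional second moment argument given $\mu_{\mathrm{occ}}$'' in the subcritical case is not well-posed as stated, since $\mu_{\mathrm{occ}}$ is a limiting object; you would need to condition on the discrete walk and then control the conditional variance of $\int f\,d\nu_N^a$, which again requires the two-point estimates above (though here only for well-separated points, so this part is recoverable).
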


We will see that this statement will imply the following two theorems:

\begin{theorem}\label{th: dim 3 convergence thick points}
If we define for every $a \in [0,1]$ the set of $a$-thick points:
\[
\Mc_N(a) := \left\{ x \in V_N: \ell_x^{\tau_N} > 2ga \log N \right\},
\]
then there exist random variables $M_a$ such that for all $a \in [0,1]$
\[
\frac{\abs{\Mc_N(a)}}{N^{2(1-a)}} \xrightarrow[N \rightarrow \infty]{\mathrm{(d)}}
M_a
.
\]
Moreover, for all $a \in [0,1)$ the distribution of $M_a$ does not depend on $a$ and
\begin{equation}\label{eq:thm law number thick points subcritical}
M_a \overset{\mathrm{(d)}}{=} \tau / g.
\end{equation}
$M_1$ is a Poisson variable with parameter $\tau / g$: for all $k \geq 0$
\begin{equation}\label{eq:thm law number thick points critical}
\Prob{M_1 = k} = \frac{1}{k!} \Expect{ e^{-\frac{\tau}{g}}  \left( \frac{\tau}{g} \right)^k }.
\end{equation}
\end{theorem}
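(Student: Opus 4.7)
The plan is to deduce this statement directly from Theorem~\ref{th: dim 3 convergence measures} by reading off the mass that $\nu_N^a$ assigns to the half-space $[-1,1]^d \times (0,\infty)$. Comparing the definition \eqref{eq: dim 3 def nu^a_N} of $\nu_N^a$ with the definition of $\Mc_N(a)$, one has the exact identity
\[
\frac{|\Mc_N(a)|}{N^{2(1-a)}} = \nu_N^a \bigl([-1,1]^d \times (0,\infty) \bigr),
\]
so it suffices to show that this total mass converges in law to $\nu^a([-1,1]^d \times (0,\infty))$ and then to compute the latter in each regime.

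The subtlety is that vague convergence of $\nu_N^a$ to $\nu^a$ on $[-1,1]^d \times \R$ does not automatically control the mass of an unbounded set, so I would close this gap by a two-sided approximation. For the lower bound, pick an increasing sequence of nonnegative functions $f_K \in C_c([-1,1]^d \times \R)$ supported in $[-1,1]^d \times (0,K)$ with $f_K \uparrow \indic{(0,\infty)}$; then Theorem~\ref{th: dim 3 convergence measures} gives $\nu_N^a(f_K) \to \nu^a(f_K)$ in law, while monotone convergence on the limit yields $\nu^a(f_K) \uparrow \nu^a([-1,1]^d \times (0,\infty))$, almost surely finite in both regimes ($=\tau/g$ subcritically, Poisson with parameter $\tau/g$ at criticality). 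For the upper bound I would establish the uniform-in-$N$ tail estimate
\[
\lim_{K \to \infty} \limsup_{N \to \infty} \Prob{ \nu_N^a \bigl([-1,1]^d \times [K,\infty) \bigr) > \delta } = 0 \qquad \text{for every } \delta > 0.
\]
By Markov's inequality this reduces to bounding $N^{-2(1-a)} \EXPECT{0}{\# \{ x \in V_N : \ell_x^{\tau_N} \geq 2ga\log N + K \}}$ by something of the form $Ce^{-K/g}$, uniformly in $N$. This is the step I expect to be the main obstacle; however, the exponential tail needed for the individual local times $\ell_x^{\tau_N}$ is precisely of the type that should already have been produced to prove Theorem~\ref{th: dim 3 convergence measures}, so I would reuse those first-moment inputs rather than develop a new argument. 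Combining the two bounds, a standard Portmanteau argument yields convergence in distribution of $|\Mc_N(a)|/N^{2(1-a)}$ to $\nu^a([-1,1]^d \times (0,\infty))$.

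It remains to identify this limit. In the subcritical regime $a \in [0,1)$, formula~\eqref{eq:thm measure subcritical} gives
\[
\nu^a \bigl([-1,1]^d \times (0,\infty)\bigr) = \frac{1}{g} \mu_{\mathrm{occ}}([-1,1]^d) \int_0^\infty e^{-\ell/g} \frac{d\ell}{g} = \frac{\tau}{g},
\]
which is exactly \eqref{eq:thm law number thick points subcritical}. At criticality, \eqref{eq:thm measure critical} identifies $\nu^1$ as a Poisson point process, so conditionally on the driving Brownian path, its mass on $[-1,1]^d \times (0,\infty)$ is Poisson distributed with mean $\tau/g$ by the same integral, and unconditioning produces \eqref{eq:thm law number thick points critical}.
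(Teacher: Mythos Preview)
Your argument is correct, but it takes a longer route than the paper. You work only from the \emph{statement} of Theorem~\ref{th: dim 3 convergence measures} (vague convergence of $\nu_N^a$) and then have to upgrade this to convergence on the non-compact set $[-1,1]^d\times(0,\infty)$ by a tightness/truncation argument. The tail bound you need is indeed easy: since under $\prob_x$ the local time $\ell_x^{\tau_N}$ is exponential with mean $G_N(x,x)\le g$, one gets $N^{-2(1-a)}\EXPECT{0}{\nu_N^a([-1,1]^d\times[K,\infty))}\le Ce^{-K/g}$ uniformly in $N$, so your plan goes through.

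The paper, by contrast, does not start from vague convergence at all. In Step~1 of the proof of Theorem~\ref{th: dim 3 convergence measures} it is shown directly---via the moment computation of Proposition~\ref{prop: dim 3 moment number thick points}---that $\nu_N^a(A\times T)$ converges in law for every Borel $A\subset[-1,1]^d$ with $\partial A$ of Lebesgue measure zero and every Borel $T\subset\R$ bounded below. The pair $A=[-1,1]^d$, $T=(0,\infty)$ satisfies these hypotheses, so the convergence of $\abs{\Mc_N(a)}/N^{2(1-a)}=\nu_N^a([-1,1]^d\times(0,\infty))$ is already contained in that intermediate step, and no separate tightness argument is required. Your identification of the limit in the two regimes is the same as the paper's.
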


\begin{theorem}\label{th: dim 3 convergence supremum}
There exists an almost surely finite random variable $L$ such that
\[
\sup_{x \in V_N} \ell_x^{\tau_N} - 2g \log N \xrightarrow[N \rightarrow \infty]{\mathrm{(d)}} L.
\]
Moreover, $L$ is a Gumbel variable with mode $g \log (\tau/g)$ (location of the maximum) and scale parameter $g$, i.e. for all $t \in \R$
\[
\Prob{L \leq t} = \Expect{ \exp \left( - \frac{\tau}{g} e^{-t/g} \right) }.
\]
\end{theorem}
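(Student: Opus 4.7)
The plan is to deduce Theorem \ref{th: dim 3 convergence supremum} directly from Theorem \ref{th: dim 3 convergence measures} applied at the critical level $a=1$, where the normalisation factor in \eqref{eq: dim 3 def nu^a_N} equals one. The key tautology is
\[
\Prob{\sup_{x \in V_N} \ell_x^{\tau_N} - 2g \log N \leq t} = \Prob{\nu_N^1 \bigl( [-1,1]^d \times (t, \infty) \bigr) = 0},
\]
so it is enough to understand the limit of the atom count of $\nu_N^1$ in the upper half-strip.

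The main technical obstacle is that $[-1,1]^d \times (t,\infty)$ is not relatively compact in $[-1,1]^d \times \R$, so the vague convergence $\nu_N^1 \to \nu^1$ supplied by Theorem \ref{th: dim 3 convergence measures} cannot be invoked directly. I would resolve this by truncating at a large height $T > t$. On the bounded rectangle $[-1,1]^d \times (t,T]$, whose boundary is almost surely $\nu^1$-null, vague convergence already gives convergence of the void probability. The truncation error $\Prob{\nu_N^1([-1,1]^d \times (T,\infty)) \geq 1}$ must be controlled uniformly in $N$, and a first-moment estimate suffices: by the strong Markov property the tail of $\ell_x^{\tau_N}$ is dominated by that of $\ell_x^\infty$ under $\prob_x$, and a standard computation for transient walks shows $\ell_x^\infty$ is exponential with mean $g$. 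Therefore
\[
\Expect{\nu_N^1([-1,1]^d \times (T,\infty))} \leq \sum_{x \in V_N} \Prob{x \text{ is visited}} \cdot N^{-2} e^{-T/g} \leq C e^{-T/g},
\]
where the last inequality uses $\sum_x \Prob{x \text{ visited}} = \Expect{|\Mc_N(0)|} = O(N^2)$, again by transience. Letting first $N \to \infty$ and then $T \to \infty$ eliminates this error (and simultaneously controls the atoms of $\nu^1$ at infinity).

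The second step is a one-line Poisson computation. By \eqref{eq:thm measure critical}, conditionally on $\mu_{\mathrm{occ}}$ the limit $\nu^1$ is a Poisson point process of intensity $g^{-1} \mu_{\mathrm{occ}}(dx) \otimes g^{-1} e^{-\ell/g} d\ell$, and its total intensity on the upper strip is
\[
\frac{1}{g^2} \int_{[-1,1]^d} \mu_{\mathrm{occ}}(dx) \int_t^\infty e^{-\ell/g} d\ell = \frac{\tau}{g} e^{-t/g},
\]
since $\mu_{\mathrm{occ}}([-1,1]^d) = \tau$. Hence the conditional void probability equals $\exp\bigl(-(\tau/g) e^{-t/g}\bigr)$, and integrating out $\tau$ yields $\Prob{L \leq t} = \Expect{\exp(-(\tau/g) e^{-t/g})}$, which is exactly the CDF of a Gumbel variable with scale $g$ and mode $g \log (\tau/g)$.

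The only real substance is therefore the uniform upper-tail bound on the maximum local time, which is needed to upgrade vague convergence to a statement about the full supremum. After that, the Gumbel law appears for free from the Poisson structure of $\nu^1$.
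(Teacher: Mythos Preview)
Your proposal is correct and follows the same core idea as the paper: identify $\{\sup_x \ell_x^{\tau_N} - 2g\log N \leq t\}$ with $\{\nu_N^1([-1,1]^d\times(t,\infty))=0\}$, then read off the void probability from the Poisson description \eqref{eq:thm measure critical}.

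The only difference is in how the convergence of $\nu_N^1([-1,1]^d\times(t,\infty))$ is justified. You treat Theorem~\ref{th: dim 3 convergence measures} as a black-box vague-convergence statement, which forces you to truncate at height $T$ and control the overshoot by the first-moment bound $\EXPECT{0}{\nu_N^1([-1,1]^d\times(T,\infty))}\leq Ce^{-T/g}$. The paper instead invokes the stronger fact, established in Step~1 of the proof of Theorem~\ref{th: dim 3 convergence measures} (via Proposition~\ref{prop: dim 3 moment number thick points}, which allows any $T\in\Bc(\R)$ with $\inf T>-\infty$), that the integer-valued random variables $\nu_N^1([-1,1]^d\times(t,\infty))$ themselves converge in law to a mixed Poisson with parameter $(\tau/g)e^{-t/g}$. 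This makes the paper's proof two lines long, with no truncation needed. Your extra step is harmless and your tail bound is correct, but it is not required given what the proof of Theorem~\ref{th: dim 3 convergence measures} actually delivers.
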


To the best of our knowledge, this result is not present in the current literature. A detailed study of the local times of random walk in dimension greater than two has been done in a series of papers by Cs\'aki, F\"{o}ldes, R\'{e}v\'{e}sz, Rosen and Shi (see \cite{CFR_2007} for a survey of this work). 
In particular, Theorem 1 of \cite{Revesz_2004} and the corollary following the main theorem of \cite{CFR_2006} improved the estimate of Erd\H{o}s and Taylor (equation \eqref{eq: d=3 erdos-taylor result}). By translating their work to our setting of continuous time random walk (see the next remark), they showed that a.s. for all $\eps > 0$, there exists $N_0 < \infty$ a.s. such that for all $N \geq N_0$,
\[
-(4+\eps)g \log \log N \leq \sup_{x \in V_N} \ell_x^{\tau_N} - 2 g \log N \leq (2+\eps) g \log \log N.
\]
Let us also mention the fact that Theorem 2 of \cite{Revesz_2004} states that for all $\eps>0$, almost surely we have $\sup_{x \in V_N} \ell_x^{\tau_N} - 2g \log N \geq ( 2(d-4)/(d-2) - \eps ) \log \log N$ for infinitely many $N$. This is not in contradiction with our Theorem \ref{th: dim 3 convergence supremum} because we only give the typical behaviour (i.e. at a fixed time) of $\sup_{x \in V_N} \ell_x^{\tau_N} - 2g \log N$.

\begin{remark}\label{rem:discrete time versus continuous time}
We have stated our results in the case of continuous time random walk but they hold as well for discrete time random walk. As already mentioned, the statements in the planar case do not need to be changed. The reason for this is because in dimension two we were essentially comparing exponential (continuous time) or geometrical (discrete time) variables with mean $g \log N$ to $a g (\log N)^2$ for some $g >0$ and $a \in (0,1)$. In both cases, if we divide these variables by $g \log N$ then they converge to exponential variables with parameter 1. Thus there is no difference between the continuous time case and the discrete time one. On the contrary, in higher dimensions, we are comparing exponential or geometrical variables with mean $g$ to $g a \log N$ and these two distributions have slightly different behaviour. In the discrete time setting, our results claim that the field composed of the local times behaves like the field composed of independent geometrical variables with mean $g$ located at each site visited by the walk. Theorems \ref{th: dim 3 convergence measures}--\ref{th: dim 3 convergence supremum} then have to be modified accordingly.
\end{remark}

\section{Outline of proofs and literature overview}\label{sec: Literature Overview and Organization of the Paper}

Section \ref{sec:Dimension 2} will be dedicated to the dimension two whereas Section \ref{sec:Dimension 3} will deal with the dimensions greater or equal to three. Let us first describe the two dimensional case.

We first recall the definition of the GFF on the square lattice. With the notations of Theorem \ref{th: frequent points, isoradial graphs} in the square lattice case, the Gaussian free field is the centred Gaussian field $\phi_N$, indexed by the vertices in $V_N$, whose covariances are given by the Green function:
\[
\mathds{E}[\phi_N(x) \phi_N(y)] = \EXPECT{x}{\ell_y^{\tau_N}}.
\]
See \cite{berestycki}, \cite{zeitouni_notes} for introductions to the GFF.
Our argument will simply relate the thick points of the random walk to those of the GFF: see \cite{kahane}, \cite{HuMillerPeres2010} in the continuum and \cite{bolthausen2001}, \cite{daviaud2006} in the discrete case.

We now explain the interest of exploiting the connection to the GFF. As usual, the proofs of Theorems \ref{thm: lattices} and \ref{th: frequent points, isoradial graphs} rely on the method of (truncated) second moment. That is, a first moment estimate on $\abs{\Mc_N(a)}$ gives us the upper bound, while a matching upper bound on the second moment of $\abs{\Mc_N(a)}$ would supply the lower bound. Moreover, it is necessary to first consider a truncated version of $\abs{\Mc_N(a)}$, where we consider points that are never too thick at all scales (this is similar to the idea in \cite{berestycki2017}). Computing the corresponding correlations is not easy with the random walk, but is essentially straightforward with the GFF as this is basically part of the definition. As only an upper bound on the second moment is needed, comparisons to the GFF with Dynkin-type isomorphisms go in the right direction. We will see that the Eisenbaum's version will be the most convenient to work with.

We now state this isomorphism. Consider $\Gamma = (V,E)$ a non-oriented connected infinite graph without loops, not necessary planar, equipped with symmetric conductances $(W_{xy})_{x,y \in V}$. Let $E'$ be the edge set $E' = \{ \{x,y\} : x,y \in V, W_{xy} >0 \}$. Let $\prob_x$ be the law under which $(Y_t)_{t \geq 0}$ is a symmetric Markov jump process with conductances $(W_{yz})_{y,z \in V}$ (i.e. jump rates $W_{yz}$ from $y$ to $z$) starting at $x$ at time 0. $Y$ is thus a nearest neighbour random walk on $(V,E')$ but not necessary on $\Gamma = (V,E)$.
As in the isoradial case, we denote $\ell_x^t, x \in V, t \geq 0$, its local times, $x_0$ a starting point, $V_N$ the ball of radius $N$ and centre $x_0$ for the graph distance of $\Gamma$, $\tau_N$ the first exit time of $V_N$.
Because $Y$ is a \textit{symmetric} Markov process, the following expression is symmetric in $x,y$:
\[
\EXPECT{x}{\ell_y^{\tau_N}} = \EXPECT{y}{\ell_x^{\tau_N}}.
\]
This allows us to define a centred Gaussian field $\phi_N$ whose covariances are given by the previous expression. $\phi_N$ is called Gaussian free field and we will denote $\mathds{P}$ its law.
The following theorem establishes a relation between the local times and the GFF (see lectures notes \cite{rosen_2014} for a good overview of this topic)

\begin{theoremm}[Eisenbaum's isomorphism]\label{th: Eisenbaum isomorphism}
For all $s>0$ and all measurable bounded function $f : \R^{V_N} \rightarrow \R$,
\begin{align*}
&\EXPECTprod{x_0}{ f \left\{ \left( \ell_x^{\tau_N} + \frac{1}{2} (\phi_N(x) + s)^2 \right)_{x \in V_N} \right\} } \\
& =
\mathds{E} \left[ \left(1 + \frac{\phi_N(x_0)}{s} \right) f \left\{ \left( \frac{1}{2} (\phi_N(x) + s)^2 \right)_{x \in V_N} \right\} \right].
\end{align*}
\end{theoremm}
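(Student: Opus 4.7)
My approach is a direct Laplace-transform comparison. By uniqueness of Laplace transforms for finite signed measures on $[0,\infty)^{V_N}$ together with a standard monotone-class extension, it suffices to verify the identity for the exponential test functions $f(u) = \exp(-\sum_{x \in V_N} \lambda_x u_x)$ indexed by $\lambda \in [0,\infty)^{V_N}$. By the independence of the walk $Y$ and the free field $\phi_N$, for such $f$ the left-hand side factorises as
\[
\EXPECT{x_0}{e^{-\sum_x \lambda_x \ell_x^{\tau_N}}}\; \mathds{E}\bigl[e^{-\tfrac12\sum_x \lambda_x(\phi_N(x)+s)^2}\bigr],
\]
so the two factors must be identified separately.

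For the walk factor I would invoke Feynman--Kac. Let $G = (G(x,y))_{x,y\in V_N}$ denote the Green matrix $G(x,y) = \EXPECT{x}{\ell_y^{\tau_N}}$ of the walk killed at $\tau_N$, set $\Lambda := \mathrm{diag}(\lambda)$ and $A := G^{-1}+\Lambda$. The function $u(x) := \EXPECT{x}{e^{-\sum_y \lambda_y \ell_y^{\tau_N}}}$ satisfies $(\mathcal{L}-\Lambda)u = 0$ on $V_N$ with $u\equiv 1$ on $V_N^c$, where $\mathcal{L}$ is the generator of $Y$. Writing $v := \mathbf{1} - u$, which vanishes outside $V_N$, and using both $\mathcal{L}\mathbf{1} = 0$ and the fact that $\mathcal{L} v$ restricted to $V_N$ coincides with the killed generator applied to $v$, i.e.\ with $-G^{-1}v$, one obtains $A v = \Lambda\mathbf{1}$, hence
\[
u(x_0) = 1 - (A^{-1}\Lambda\mathbf{1})(x_0).
\]

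For the Gaussian factor I would complete the square. The density of $\phi_N$ is proportional to $\exp(-\tfrac12 \phi^\top G^{-1}\phi)$, so inserting the factor $e^{-\tfrac12\sum_x \lambda_x(\phi(x)+s)^2}$ produces the quadratic form $\phi^\top A\phi + 2s\phi^\top \Lambda\mathbf{1} + s^2 \mathbf{1}^\top \Lambda\mathbf{1}$, which completes to $(\phi + sA^{-1}\Lambda\mathbf{1})^\top A(\phi + sA^{-1}\Lambda\mathbf{1})$ up to an additive constant. Consequently, under the corresponding tilted probability measure $\phi_N$ is Gaussian with mean $-sA^{-1}\Lambda\mathbf{1}$ and covariance $A^{-1}$, which gives
\[
\frac{\mathds{E}\bigl[(1+\phi_N(x_0)/s)\,e^{-\tfrac12\sum_x\lambda_x(\phi_N(x)+s)^2}\bigr]}{\mathds{E}\bigl[e^{-\tfrac12\sum_x\lambda_x(\phi_N(x)+s)^2}\bigr]} = 1 - (A^{-1}\Lambda\mathbf{1})(x_0) = u(x_0).
\]
Multiplying through by $\mathds{E}\bigl[e^{-\tfrac12\sum_x \lambda_x(\phi_N(x)+s)^2}\bigr]$ recovers the desired identity for exponential $f$, and the opening Laplace/monotone-class reduction extends it to all bounded measurable $f$.

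The main obstacle is the Feynman--Kac bookkeeping: one must align the sign convention for the killed generator with the quadratic form defining the GFF, so that the same matrix $G^{-1}$ appears on both sides, and handle the boundary-values of $\mathbf{1}$ carefully so that the key identity $Av = \Lambda\mathbf{1}$ comes out cleanly. Once both sides have been reduced to the common expression $1 - (A^{-1}\Lambda\mathbf{1})(x_0)$, the equality is immediate and no additional stochastic-calculus input is required.
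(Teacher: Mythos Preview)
The paper does not supply its own proof of this statement: Theorem~\ref{th: Eisenbaum isomorphism} is quoted as a known result with a reference to the lecture notes \cite{rosen_2014}, and is then used as a black box in Section~\ref{sec: Lower Bound}. So there is nothing to compare against on the paper's side.

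Your argument is correct and is essentially the classical route to Eisenbaum-type identities. The Laplace-transform reduction is legitimate: the right-hand side defines a finite signed measure on $[0,\infty)^{V_N}$ (the weight $1+\phi_N(x_0)/s$ has finite first moment under $\mathds{P}$ and total mass one), so equality of Laplace transforms over $\lambda\in[0,\infty)^{V_N}$ does determine equality of the measures, and the extension to bounded measurable $f$ follows by a monotone-class / Stone--Weierstrass argument. The Feynman--Kac computation is right: with $v=\mathbf 1-u$ vanishing on $V_N^c$, the identity $(-\mathcal L)v = G^{-1}v$ on $V_N$ holds precisely because $v$ has Dirichlet boundary values, and combining with $(\mathcal L-\Lambda)u=0$ on $V_N$ and $\mathcal L\mathbf 1=0$ gives $Av=\Lambda\mathbf 1$, hence $u(x_0)=1-(A^{-1}\Lambda\mathbf 1)(x_0)$. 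The Gaussian side is a straightforward completion of the square, and the tilted mean $-sA^{-1}\Lambda\mathbf 1$ reproduces the same expression. The one point worth stating explicitly in a written-up version is why $A=G^{-1}+\Lambda$ is invertible (it is symmetric positive definite for $\lambda\ge 0$ since $G$ is), so that both $A^{-1}$ and the tilted Gaussian law are well defined.
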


\begin{remark}\label{rem: eisenbaum's isomorphism}
We are now going to explain why we chose to use this isomorphism instead of the maybe more well-known generalised second Ray-Knight theorem. To ease the comparison, we are going to state this other isomorphism in the setting that is of interest to us. 
Consider the graph $(V_N,E_N)$ with $E_N = \{ \{x,y\} : x,y \in V_N, W_{xy} > 0 \}$. Let $\prob_x$ be the law under which $(Y_t)_{t \geq 0}$ is a symmetric Markov jump process with conductances $(W_e)_{e \in E_N}$ starting at $x$ at time 0. Let $\ell_x^t,x \in V_N, t>0$, be the associated local times and for $u>0$, define 
$\tau_u := \inf \{ t>0: \ell_{x_0}^t \geq u \}$ and $\tau_{x_0} := \inf \{t >0: Y_t = x_0 \}$. We can now define $\mathds{P}$ the law under which $(\psi_N(x), x \in V_N)$ is the GFF in $V_N$ with zero-boundary condition at $x_0$, i.e. $\psi_N$ is a centred Gaussian vector whose covariance matrix is given by
\[
\mathds{E} [\psi_N(x) \psi_N(y)] = \EXPECT{x}{\ell_y^{\tau_{x_0}}}.
\]
The generalised second Ray-Knight theorem states that (see again the lecture notes \cite{rosen_2014}):
\begin{equation}
\label{eq:secondRayKnight}
\left( \ell_x^{\tau_u} + \frac{1}{2} \psi_N(x)^2 \right)_{x \in V_N}
\overset{\mathrm{(d)}}{=}
\left( \frac{1}{2} \left( \psi_N(x) + \sqrt{2u} \right)^2 \right)_{x \in V_N}
\end{equation}
under $\prob_{x_0} \otimes \mathds{P}$ and $\mathds{P}$.

It would have been possible to use this isomorphism to show Theorems \ref{thm: lattices} and \ref{th: frequent points, isoradial graphs}. Compared to the Eisenbaum's isomorphism above, this has the advantage that the laws of the GFFs on the left hand side and right hand side are the same. However this has a drawback: indeed it is necessary to stop the walk where it starts, i.e. at $x_0$. This isomorphism then leads to a GFF $\psi_N$ pinned at $x_0$. This is essentially equivalent to adding a global noise to the Dirichlet GFF $\phi_N$ of order $\sqrt{\log N}$ which is sufficient to ruin second moment approach. This noise would have to be removed by hand in order to apply the method of second moment. This is possible but makes the proof substantially longer.

The generalised second Ray-Knight isomorphism has been used several times to study problems related to local times (see for instance \cite{CoverBlanketTimes}). We now mention two works that are maybe the most relevant to us.
The isomorphism \eqref{eq:secondRayKnight} immediatly gives the following stochastic domination:
\[
\left( \sqrt{\ell_x^{\tau_u}} \right)_{x \in V_N} \prec \left( \frac{1}{\sqrt{2}} \abs{ \psi_N(x) + \sqrt{2u} } \right)_{x \in V_N}
\]
under $\prob_{x_0}$ and $\mathds{P}$.
One can actually show a stronger result and replace the absolute value on the right hand side by $\max( \cdot ,0)$ (Theorem 3.1 of \cite{Zhai2014}). Abe \cite{abe2015RayKnight} exploited this and used the symmetry of the GFF to make links between what was called thin points and thick points of the random walk on the two-dimensional torus, up to a multiple of the cover time.

Let us also mention that Abe and Biskup \cite{AbeBiskup} have announced a work in preparation which relates the thick points of random walk to the Liouville quantum gravity in dimension two. This is in the same spirit as this paper as they also rely on a connection to the GFF. However, we emphasise some important differences. First, the walk they consider is on a box and has wired boundary conditions, meaning that the walk is effectively re-randomised every time it hits the boundary of the box. Second, they consider the local time profile at a regime comparable to the cover time, so that the comparison to the GFF is perhaps more clear.
\end{remark}

\paragraph*{Organisation - planar case:}

The two-dimensional part of the paper will be organised as follows. In Section \ref{sec: General Framework and Upper Bound} we will present the general framework that we treat (Theorem \ref{th: frequent points, general case}). We will then show that Theorems \ref{thm: lattices} and \ref{th: frequent points, isoradial graphs} are simple corollaries. The upper bound, which is the easy part, will be briefly proved at the end of the same section.
Section \ref{sec: Lower Bound} is devoted to the lower bound.
We first show that the probability to have a lot of thick points does not decay too quickly. This is the heart of our proof and makes use of the comparison to the GFF.
We then bootstrap this argument to obtain the same statement with high probability, see Lemma \ref{lem: from not too small proba to proba 1} at the beginning of Section \ref{sec: Lower Bound}.
This lemma is a key feature of our proof and allows us to use the comparison to the GFF. Indeed, since we do not require very precise estimates, we can deal with the change of measure coming from the isomorphism through very rough bounds, such as: $\abs{\phi_N(x_0)} \leq (\log N)^2$ with high probability (see Lemma \ref{lem: upper bound event Eisenbaum}). This only introduces a poly-logarithmic multiplicative error in the estimate of the probabilities that two given points are thick, and so does not matter for the computation of the fractal dimension of the number of thick points on a polynomial scale.

If we want more accurate estimates, more ideas are required. For instance, for the simple random walk on the square lattice, the comparison between the number of thick points for the random walk and for the GFF breaks down: the two following expectations converge as $N$ goes to infinity:
\begin{gather}
\label{eq: approximation first moment thick points RW}
\lim_{N \rightarrow \infty}
\frac{\log N}{N^{2(1-a)}} \EXPECT{0}{ \# \left\{ x \in V_N: \ell_x^{\tau_N} \geq \frac{4a}{\pi} ( \log N)^2 \right\} }
\in (0, \infty),
\\
\label{eq: approximation first moment thick point GFF}
\lim_{N \rightarrow \infty}
\frac{\sqrt{\log N}}{N^{2(1-a)}} \mathds{E} \left[ \# \left\{ x \in V_N: \tfrac{1}{2} \phi_N(x)^2 \geq \frac{4a}{\pi} ( \log N)^2 \right\} \right]
\in (0, \infty).
\end{gather}
In the article \cite{BiskupLouidor} the thick points of the discrete GFF $\phi_N$ were encoded in point measures of a similar form as the one we defined in \eqref{eq: dim 3 def nu^a_N}. The authors showed the convergence of such measures. As a consequence, they went beyond the estimate \eqref{eq: approximation first moment thick point GFF} and showed that
\begin{equation}\label{eq:LouidorBiskup}
\frac{\sqrt{\log N}}{N^{2(1-a)}} \# \left\{ x \in V_N: \tfrac{1}{2} \phi_N(x)^2 \geq \frac{4a}{\pi} ( \log N)^2 \right\}
\end{equation}
converges in law to a nondegenerate random variable.

\begin{question}
In the case of simple random walk on the square lattice starting at the origin, does
\begin{equation}\label{eq:myquestion}
\frac{\log N}{N^{2(1-a)}} \# \left\{ x \in V_N: \ell_x^{\tau_N} \geq \frac{4a}{\pi} ( \log N)^2 \right\}
\end{equation}
converge to a nondegenerate random variable as $N$ goes to infinity?
\end{question}

Notice that the renormalisations are different in \eqref{eq:LouidorBiskup} and in \eqref{eq:myquestion}. These differences suggest scraping the GFF approach if we want optimal estimates. This is what we will do in higher dimensions.

\textit{
Update: after this work was completed, this question has been solved in \cite{jegoGMC}, \cite{AbeBiskup} and \cite{jegoRW}. 
The framework of \cite{jegoRW} is the above-described setting of planar random walk stopped upon hitting the boundary of $V_N$ for the first time, whereas \cite{jegoGMC} works in an analogue setting for planar Brownian motion. 
The article \cite{AbeBiskup} considers different type of walks that are run up to a time proportional to the cover time of a planar graph and that have wired boundary condition (see Remark \ref{rem: eisenbaum's isomorphism}).
}

\bigbreak

We have finished to discuss the two-dimensional case and we now describe the situation in higher dimensions. The article \cite{DPRZ_2000_spatial} studied the thick points of occupation measure of Brownian motion in dimensions greater or equal to three. They obtained the leading order of the maximum and computed the Hausdorff dimension of the set of thick points. The article \cite{CFRRS_2005}, as well as \cite{CFR_2005}, \cite{CFR_2006}, \cite{CFRbis_2007}, \cite{CFRter_2007} (again, see \cite{CFR_2007} for a survey on this series of paper), studied the case of symmetric transient random walk on $\Z^d$ with finite variance.
One of their results computed the leading order of the maximum of the local times too. In both \cite{DPRZ_2000_spatial} and \cite{CFRRS_2005}, a key feature of the proofs is a localisation property (Lemma 3.1 of \cite{DPRZ_2000_spatial} and Lemma 2.2 of \cite{CFRRS_2005}) which roughly states that a thick point accumulates most of its local time in a short interval of time. This property allows them to consider independent variables and makes the situation simpler compared to the two-dimensional case.

Let us also mention the paper \cite{chiarini2015} which studied the scaling limit of the discrete GFF in dimension greater or equal to three. The authors obtained a result similar to Theorem \ref{th: dim 3 convergence measures}. Namely, they showed that in the limit the field behaves as independent Gaussian variables. More precisely, they defined a point process analogue to $\nu_N^1$ (see \eqref{eq: dim 3 def nu^a_N}) which encodes the thickest points of the GFF. They showed that this point process converges to a Poisson point process. Their situation is simpler because the intensity measure is governed by the Lebesgue measure rather than the occupation measure of Brownian motion. In particular, they could use the Stein-Chen method which allowed them to consider only the two first moments.

\paragraph*{Organisation - higher dimensions:}
Let us now present the main lines of our proofs and the organisation of the paper.
In Section \ref{sec: dim 3 proofs theorems}, Theorems \ref{th: dim 3 convergence measures}, \ref{th: dim 3 convergence thick points} and \ref{th: dim 3 convergence supremum} will all be obtained from the joint convergence of the sequences of real-valued random variables $\nu^a_N(A_1 \times T_1), \dots, \nu^a_N(A_r \times T_r)$, for all suitable $A_i \subset [-1,1]^d$ and $T_i \subset \R$. We will obtain this fact by computing explicitly all the moments of these variables (Proposition \ref{prop: dim 3 moment number thick points}). This is actually the heart of our proofs and Section \ref{sec: dim 3 proof proposition} will be entirely dedicated to it.
To compute the $k$-th moment of $\nu^a_N(A \times T)$, we will estimate the probability that the local times in $k$ different points, say $x_1, \dots, x_k$, belong to $2ga \log N + T$. In the subcritical regime ($a < 1$), we will be able to assume that these points are far away from each other. In that case, Lemma \ref{lem: dim 3 number of excursions E} will show that we can restrict ourselves to the event that there exists a permutation $\sigma$ of the set of indices $\{1, \dots, k\}$ which orders the vertices so that we have the following: the walk first hits $x_{\sigma(1)}$, accumulates a big local time in $x_{\sigma(1)}$, then hits $x_{\sigma(2)}$, accumulates a big local time in $x_{\sigma(2)}$, etc. When the walk has visited $x_{\sigma(i)}$ it does not come back to the vertices $x_{\sigma(1)}, \dots, x_{\sigma(i-1)}$. The local times can thus be treated as if they were independent.

At criticality ($a=1$), we do not renormalise the number of thick points and we will a priori have to take into account points which are close to each other. Here, the key observation - contained in Lemma \ref{lem: dim 3 two close points are not both thick} and already present in Corollary 1.3 of \cite{CFRRS_2005} - is that if two distinct points are close to each other, then the probability that they are both thick is much smaller than the probability that one of them is thick, even if they are neighbours! This is specific to the dimension greater or equal to 3 and tells that the thick points do not cluster. Thus, only the points which are either equal or far away from each other will contribute to the $k$-th moment.

Section \ref{sec: dim 3 proof lemmas} will contain the proofs of four intermediate lemmas that are needed to prove Proposition \ref{prop: dim 3 moment number thick points} on the convergence of the moments of $\nu^a_N(A_1 \times T_1), \dots, \nu^a_N(A_r \times T_r)$ for suitable $A_i \subset [-1,1]^d$ and $T_i \subset \R$.

\section{Dimension two}\label{sec:Dimension 2}

\subsection{General framework and upper bound}\label{sec: General Framework and Upper Bound}

We now describe the general setup for the theorem. Consider $\Gamma = (V,E)$ a non-oriented connected infinite graph without loops, not necessary planar, equipped with \textit{symmetric} conductances $(W_{xy})_{x,y \in V}$.
As before, we take $x_0 \in V$ a starting point and write $d_\Gamma$ for the graph distance. We will also write
\[
\forall N \in \N, V_N(x_0) := \{x \in V: d_\Gamma(x,x_0) \leq N \}.
\]
Let $\prob_x$ be the law under which $(Y_t)_{t \geq 0}$ is a symmetric Markov jump process with conductances $(W_{yz})_{y,z \in V}$ (i.e. jump rates $W_{yz}$ from $y$ to $z$) starting at $x$ at time 0. $Y$ is thus a nearest neighbour random walk on $(V,E')$, where $E' = \{ \{x,y\} : x,y \in V, W_{xy} >0 \}$, but not necessary on $\Gamma$.
We introduce the first exit time of $V_N(x_0)$ and the local times:
\[
\tau_N(x_0) := \inf \left\{ t \geq 0, Y_t \notin V_N(x_0) \right\} \mathrm{~and~} \forall x \in V, \forall t\geq 0, 
\ell_x^t := \int_0^{t} \indic{Y_s = x} ds.
\]
Finally we will denote $G_N^{x_0}$ the Green function, i.e.:
\begin{equation}\label{eq: definition Green function}
G_N^{x_0}(x,y) := \EXPECT{x}{\ell_y^{\tau_N(x_0)}}.
\end{equation}
If there is no confusion, we will simply write $V_N, \tau_N$ and $G_N$ instead of $V_N(x_0)$, $\tau_N(x_0)$ and $G_N^{x_0}$.

\textbf{Notation:} For two real-valued sequences $(u_N)_{N \geq 1}$ and $(v_N)_{N \geq 1}$ and for some parameter $\alpha$, we will denote $u_N = o_\alpha(v_N)$ if
\begin{align*}
\forall \eps>0, \exists N_0 = N_0(\alpha, \eps) >0, \forall N \geq N_0, \abs{u_N} \leq \eps \abs{v_N},
\end{align*}
and we will denote $u_N = O_\alpha(v_N)$ if
\begin{align*}
\exists C = C(\alpha) > 0, \exists N_0 = N_0(\alpha), \forall N \geq N_0, \abs{u_N} \leq C \abs{v_N}.
\end{align*}

We now make the following assumptions on the graph $\Gamma$ and on the walk $Y$:

\paragraph{Assumptions}\label{para: Assumptions}

We start with two assumptions on the geometry of the graph $\Gamma$.

\begin{assumption}\label{ass1}
$\# V_N(x_0) = N^{2+o(1)}$ and for all $x'_0 \in V_N(x_0)$
there exists a subset $Q_N(x'_0) \subset V_N(x'_0)$ with $N^{2+o(1)}$ points such that
\begin{equation}
\forall \alpha < 2, \sum_{x,y \in Q_N(x'_0)} \left( \frac{N}{d_\Gamma(x,y) \vee 1} \right)^\alpha = N^{4+o_\alpha(1)}. \label{eq: hypothesis on Q_N 2}
\end{equation}
\end{assumption}

\begin{assumption}\label{ass3}
For all $\eta \in (0,1)$, $x_0' \in V_N(x_0)$, $x \in Q_N(x'_0)$ and $R \in [1, N^{1-\eta}]$, we can find a subset $C_R(x) \subset Q_N(x'_0)$ which can be thought of as a circle of radius $R$ centred at $x$:
\begin{subequations} \label{eq: hypothesis existence circles}
\begin{align}
\forall y \in C_R(x), \log \frac{R}{d_\Gamma(x,y)} & = o_\eta(\log N), \label{eq: hypothesis existence circles, distance} \\
\frac{1}{\# C_R(x)^2} \sum_{y,y' \in C_R(x)} \log \left( \frac{R}{d_\Gamma(y,y') \vee 1 } \right) & = o_\eta( \log N ). \label{eq: hypothesis existence circles, control sum}
\end{align}
\end{subequations}
\end{assumption}

We now assume that we have good controls on the Green function:

\begin{assumption}\label{ass2}
There exists $g > 0$ such that:
\begin{subequations}\label{eq: hypothesis on the Green functions}
\begin{align}
\forall x \in V_N(x_0), G_N^{x_0}(x,x) & \leq g \log N + o(\log N), \label{eq: hypothesis upper bound G_N} \\
\forall x_0' \in V_N(x_0), \forall x, y \in Q_N(x'_0), G_N^{x_0'}(x,y) & = g \log \left( \frac{N}{d_\Gamma(x,y) \vee 1} \right) + o(\log N), \label{eq: hypothesis G_N on Q_N} \\
\forall x_0' \in V_N(x_0), \forall x \in Q_N(x'_0), G_N^{x_0'}(x'_0,x) & \geq (1/N)^{o(1)}. \label{eq: hypothesis G_N starting point}
\end{align}
\end{subequations}
\end{assumption}

Finally, we assume that the jumps are not unreasonable:

\begin{assumption}\label{ass4}
For all $K_N = N^{1-o(1)} \leq N$, $x_0' \in V_{N-K_N}(x_0)$ and $M >0$,
\begin{equation}\label{eq: hypothesis jumps}
\PROB{x_0'}{ d_\Gamma \left( x_0', Y_{\tau_{K_N}(x_0')} \right) \geq K_N+M} \leq K_N N^{o(1)} / M.
\end{equation}
where $\tau_{K_N}(x_0')$ is the first exit time of $V_{K_N}(x_0')$.
\end{assumption}

We now briefly discuss the above assumptions.
Note that we have assumed that all the bounds do not depend on the starting point $x'_0 \in V_N(x_0)$. This will be important for our Lemma \ref{lem: from not too small proba to proba 1}. Assumption \ref{ass3} is needed to go beyond the $L^2$ phase whereas Assumption \ref{ass4} is needed to bootstrap the probability to have a lot of thick points (Lemma \ref{lem: from not too small proba to proba 1}). This latter assumption can be weakened. We could replace $K_N N^{o(1)}/M$ by $f(K_N N^{o(1)}/M)$ with a function $t \in (0, \infty) \mapsto f(t) \in (0,\infty)$ which goes to zero quickly enough as $t$ goes to zero. For instance, any positive power of $t$ would do.

As confirmed by the theorem below, a sensible definition of $a$-thick points is given by
\[
\Mc_N(a) := \left\{ x \in V_N: \ell_x^{\tau_N} \geq 2ag ( \log N)^2  \right\}.
\]

\begin{theorem}\label{th: frequent points, general case}
Assuming the above assumptions we have the following two $\prob_{x_0}$-a.s. convergences:
\[
\lim_{N \rightarrow \infty} \frac{\max_{x \in V_N} \ell_x^{\tau_N}}{(\log N)^2} = 2g
\mathrm{~and~}
\forall a \in [0,1),
\lim_{N \rightarrow \infty} \frac{\log \abs{\Mc_N(a)}}{\log N} = 2(1 - a).
\]
\end{theorem}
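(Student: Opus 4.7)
The plan is to prove both limits by the (truncated) second moment method, with the twist that the hard second-moment estimate is transferred to the Gaussian free field via Eisenbaum's isomorphism. The upper bound in both limits comes from a first-moment calculation (the maximum bound following by taking $a = 1+\eps$); the lower bound in both limits then follows from a lower bound on $\abs{\Mc_N(a)}$ for subcritical $a<1$ (applied for $a$ close to $1$ to handle the maximum).

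\textbf{Upper bound.} For fixed $x \in V_N$, by the strong Markov property at the first visit to $x$ and the fact that under $\prob_x$ the variable $\ell_x^{\tau_N}$ is exponential with mean $G_N(x,x)$, we have
\[
\PROB{x_0}{\ell_x^{\tau_N} \geq 2ag(\log N)^2} \leq \exp\bigl(-2ag(\log N)^2 / G_N(x,x)\bigr) \leq N^{-2a + o(1)}
\]
by Assumption \ref{ass2}. Summing over $V_N$ and using $\#V_N = N^{2+o(1)}$ from Assumption \ref{ass1} gives $\EXPECT{x_0}{\abs{\Mc_N(a)}} \leq N^{2(1-a) + o(1)}$. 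Markov's inequality along a polynomial subsequence $N_k = 2^k$ combined with Borel--Cantelli and a monotonicity interpolation between consecutive $N_k$ yields $\limsup_N \log \abs{\Mc_N(a)} / \log N \leq 2(1-a)$ almost surely; applied at $a = 1 + \eps$, the same input gives $\max_x \ell_x^{\tau_N}/(\log N)^2 \leq 2g + o(1)$ a.s.

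\textbf{Lower bound.} A direct second moment of $\abs{\Mc_N(a)}$ is too large because of points which are exceptionally thick at some intermediate scale. The fix is to truncate, keeping only those $x \in \Mc_N(a)$ whose local time inside the ball $B(x,R)$ does not exceed $2ag(\log(N/R))^2$ by more than a fixed $\delta$-margin for every dyadic $R \in [1,N]$, in the spirit of the multiscale refinement of \cite{berestycki2017}. The first moment of the truncated set $\tilde{\Mc}_N(a)$ is still $N^{2(1-a)+o(1)}$; the aim is to prove $\EXPECT{x_0}{\abs{\tilde{\Mc}_N(a)}^2} \leq N^{4(1-a)+o(1)}$. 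I would apply Eisenbaum's isomorphism with $s$ of order $(\log N)^2$: on the high $\mathds{P}$-probability event $\{\abs{\phi_N(x_0)} \leq (\log N)^2\}$ (whose complement is negligible since $\phi_N(x_0)$ has variance $O(\log N)$ by \eqref{eq: hypothesis upper bound G_N}), the change-of-measure factor $1+\phi_N(x_0)/s$ is $1+o(1)$, and the event $\{x,y \in \tilde{\Mc}_N(a)\}$ is contained, up to a harmless shift of thresholds, in the analogous event for $\tfrac{1}{2}(\phi_N(\cdot)+s)^2$. That latter event is a Gaussian two-point probability evaluated through the Green function estimates of Assumption \ref{ass2} and the multiscale decoupling furnished by the circles $C_R(x)$ of Assumption \ref{ass3}. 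Summing over pairs in $Q_N(x_0)$ and invoking \eqref{eq: hypothesis on Q_N 2} closes the second-moment bound, and Paley--Zygmund yields $\PROB{x_0}{\abs{\Mc_N(a)} \geq N^{2(1-a)-\eps}} \geq c(\eps) > 0$ uniformly in $N$.

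\textbf{Bootstrap and main obstacle.} To upgrade from positive probability to probability one, I would invoke Lemma \ref{lem: from not too small proba to proba 1}: since the previous estimate is uniform in the starting point $x_0' \in V_N(x_0)$, one can force the walk to enter many disjoint sub-boxes $V_{N'}(x_0')$ with $N' = N^{1-o(1)}$, each sub-box furnishing an essentially independent trial of the positive-probability lower bound, with Assumption \ref{ass4} used to control the overshoot of the walk upon entering each sub-box. Polynomially many such trials boost the probability to $1-o(1)$, and Borel--Cantelli along a geometric subsequence in $N$ gives the almost sure lower bound on $\abs{\Mc_N(a)}$ for $a < 1$, and on the maximum by letting $a \uparrow 1$. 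The chief obstacle is the truncated second moment via Eisenbaum, where one must simultaneously handle the crude control of the $1+\phi_N(x_0)/s$ factor (tolerable precisely because the fractal dimension is on a polynomial scale and thus survives a poly-logarithmic multiplicative error) and evaluate the multiscale Gaussian two-point probability, which is the exact point at which Assumption \ref{ass3} becomes indispensable; the bootstrap step is the second technical centrepiece and is the reason Assumption \ref{ass4} on the size of jumps must be imposed.
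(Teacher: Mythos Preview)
Your overall architecture matches the paper's: first-moment upper bound, truncated second moment via Eisenbaum for the lower bound, then the bootstrap of Lemma \ref{lem: from not too small proba to proba 1}. There is however one concrete error and one over-optimistic claim.

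\textbf{The choice of $s$.} Taking $s$ of order $(\log N)^2$ does not work. After the isomorphism, the two-point event you need to bound is
\[
\left\{ \tfrac{1}{2}(\phi_N(x)+s)^2 \geq 2ga(\log N)^2 \right\} \cap \left\{ \tfrac{1}{2}(\phi_N(y)+s)^2 \geq 2ga(\log N)^2 \right\}
\]
intersected with the truncation. If $s \gg \log N$ then $\tfrac{1}{2}(\phi_N(\cdot)+s)^2 \sim \tfrac{1}{2}s^2 \gg (\log N)^2$ deterministically, so the threshold is vacuous and the resulting second-moment bound is no better than $N^{4+o(1)}$. The paper takes $s$ \emph{fixed} (e.g.\ $s=1$) and handles the change-of-measure factor the other way round: on $\{|\phi_N(x_0)| \leq s((\log N)^2-1)\}$ one has $1+\phi_N(x_0)/s \leq (\log N)^2$, and the complement has probability $N^{-\log N}$ (this is Lemma \ref{lem: upper bound event Eisenbaum}). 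The resulting poly-log loss is exactly the ``poly-logarithmic multiplicative error'' you correctly anticipate; the point is that you must \emph{accept} that loss rather than try to kill it by enlarging $s$, because enlarging $s$ destroys the threshold.

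\textbf{The Paley--Zygmund output.} You claim $\PROB{x_0}{|\Mc_N(a)| \geq N^{2(1-a)-\eps}} \geq c(\eps)>0$. In the paper's argument the truncation uses circles $C_R(x)$ only for $R \leq N^{1-\eta}$ (this is forced by Assumption \ref{ass3}), and pairs with $d_\Gamma(x,y)>N^{1-\eta}$ are controlled by a crude bound that costs a factor $N^{4a\eta}$ in the second moment. The Paley--Zygmund bound is therefore only $\geq N^{-4a\eta+o_\eta(1)}$, i.e.\ sub-polynomial after sending $\eta\downarrow 0$, not a fixed constant. This is precisely why Lemma \ref{lem: from not too small proba to proba 1} is stated for $p_N$ with $\log p_N = o(\log N)$ rather than $p_N$ bounded away from $0$; your bootstrap paragraph should allow for this.

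Apart from these two points, your sketch is faithful to the paper's proof. Note also that the paper's truncation is on the \emph{circle averages} $M_R^x \ell^{\tau_N}$ (and correspondingly $M_R^x(\phi_N+s)^2$), not on the local time inside a ball; this is what makes the Cameron--Martin shift computation tractable, since under the tilt by $e^{\delta(\phi_N(x)+\phi_N(y))}$ the mean of $M_R^x\phi_N$ has a clean expression via Assumption \ref{ass2}.
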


We now check that Theorems \ref{thm: lattices} and \ref{th: frequent points, isoradial graphs} are consequences of this last theorem. Theorem \ref{th: frequent points, isoradial graphs} naturally fits into the setting of continuous time random walks defined using symmetric conductances, whereas the setting of Theorem \ref{thm: lattices} corresponds to the above-described general framework with $\Gamma$ being the square lattice equipped with weights $W_{xy} = \Prob{X = y-x}$. These weights are symmetric thanks to the assumption $X \overset{\mathrm{(d)}}{=} - X$. 
We now need to check that these two setups satisfy Assumptions \ref{ass1} - \ref{ass4} above.

For the isoradial case, the walk is a nearest-neighbour random walk so Assumption \ref{ass4} is clear. The following lemma checks that all the other assumptions are fulfilled if we define
\[
\forall x'_0 \in V_N(x_0), Q_N(x'_0) := \left\{
\begin{array}{cc}
V_{N/R_N}(x'_0) & \mathrm{~in~the~square~lattice~case}, \\
V_{\eps N}(x'_0) & \mathrm{~in~the~isoradial~case},
\end{array}
\right. \\
\]
where $R_N$ and $\eps$ are defined Lemma \ref{lem:check} below,
and if we define in both cases
\begin{align*}
\forall x'_0 \in V_N(x_0), \forall x \in Q_N(x'_0), \forall R \geq 1, C_R(x) & := \{ y \in Q_N(x'_0): d_\Gamma(x,y) = R \}.
\end{align*}

\begin{lemma}\label{lem:check}
\begin{enumerate}
\item Square Lattice.
Consider a walk $Y$ as in Theorem \ref{thm: lattices} and denote by $\mathcal{G}$ the covariance matrix of the increments. Let $x_0' \in \Z^2$ be a starting point. Then there exists $C>0$ independent of $x_0'$ such that for all $M>0$,
\begin{equation}\label{eq: first statement lemma}
\PROB{x_0'}{ d_\Gamma \left( x_0', Y_{\tau_N(x_0')} \right) \geq N+M} \leq C N / M.
\end{equation}
Moreover for all $\eta \in (0,1)$,
\begin{align}
\forall x,y \in V_N(x_0'), G_N^{x_0'}(x,y) & \leq \frac{1}{\pi \sqrt{\det \mathcal{G}}} \log \left( \frac{N}{\abs{x-y} \vee 1} \right) + o( \log N ), \label{eq: second statement lemma1} \\
\forall x,y \in V_{(1-\eta)N}(x_0'), G_N^{x_0'}(x,y) & \geq \frac{1}{\pi \sqrt{\det \mathcal{G}}} \log \left( \frac{N}{\abs{x-y} \vee 1} \right) + o_\eta( \log N ) \label{eq: second statement lemma2}
\end{align}
and there exists a sequence $R_N = N^{o(1)}$ such that
\begin{equation}
\label{eq:lem_16c1}
\forall x \in V_{N/R_N}(x_0'), G_N^{x_0'}(0,x_0') \geq N^{o(1)}.
\end{equation}

\item Isoradial Graphs.
Consider a walk $Y$ as in Theorem \ref{th: frequent points, isoradial graphs}. Let $x_0' \in V$ be a starting point. Then for all $\eta \in (0,1)$,
\begin{align}
\forall x,y \in V_N(x_0'), G_N^{x_0'}(x,y) & \leq \frac{1}{2 \pi} \log \left( \frac{N}{\abs{x-y} \vee 1} \right) + C, \label{eq: statement lemma isoradial graphs 1} \\
\forall x,y \in V_{(1-\eta)N}(x_0'), G_N^{x_0'}(x,y) & \geq \frac{1}{2 \pi} \log \left( \frac{N}{\abs{x-y} \vee 1} \right) - C(\eta) \label{eq: statement lemma isoradial graphs 2}
\end{align}
for some $C, C(\eta) >0$ independent of $x_0'$. Moreover, there exist $c, \eps>0$ independent of $x_0'$ such that 
\begin{equation}\label{eq:lem_16c2}
\forall x \in V_{\eps N}(x_0'), G_N^{x_0'}(x_0',x) \geq c.
\end{equation}
\end{enumerate}
\end{lemma}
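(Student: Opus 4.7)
The unified strategy is to reduce every Green-function estimate to asymptotics of a potential kernel $a$ on the infinite graph. Optional stopping applied to the off-diagonal martingale $(a(Y_t,y) - \ell_y^t)_t$ gives the representation
\[
G_N^{x_0'}(x,y) = \EXPECT{x}{a(Y_{\tau_N(x_0')},y)} - a(x,y),
\]
and then all the claimed bounds follow from inserting asymptotics of $a$ and controlling the overshoot at the boundary of $V_N(x_0')$.

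For the square lattice I would first prove the overshoot bound \eqref{eq: first statement lemma}. A standard martingale computation (applied to $\norme{Y_t - x_0'}^2 - t \, \mathrm{tr}(\mathcal{G})$) gives $\EXPECT{x_0'}{\tau_N(x_0')} = O(N^2)$, so the expected number of jumps before exit is $O(N^2)$; since a single jump has probability at most $\Expect{\abs{X}^2}/M^2$ of exceeding $M$ in norm (Chebyshev), a union bound yields overshoot probability $\leq CN^2/M^2 \leq CN/M$ for $M \geq N$ (and the bound is trivial for $M < N$). Next, Spitzer's theorem for symmetric mean-zero walks on $\Z^2$ with finite variance supplies a potential kernel with $a(z) = \frac{1}{\pi \sqrt{\det \mathcal{G}}} \log\abs{z} + O(1)$ as $\abs{z}\to\infty$. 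Combined with the representation above, this immediately gives \eqref{eq: second statement lemma1} (the overshoot bound handles the rare event that $\abs{Y_{\tau_N}}$ is much larger than $N$), and the matching lower bound \eqref{eq: second statement lemma2}, because for $y \in V_{(1-\eta)N}(x_0')$ the distance $\abs{Y_{\tau_N} - y}$ is at least of order $\eta N$ deterministically. The bound \eqref{eq:lem_16c1} then drops out of \eqref{eq: second statement lemma2} with $y = x_0'$: for $\abs{x-x_0'} \leq N/R_N$ one gets $G_N^{x_0'}(x_0',x) \geq c \log R_N - o(\log N)$, which suffices for any $R_N \to \infty$ with $R_N = N^{o(1)}$.

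For the isoradial case the analogous potential kernel on the infinite graph satisfies $a(x,y) = \frac{1}{2\pi} \log\abs{x-y} + O(1)$ uniformly under the ellipticity assumption, going back to Kenyon's work on the discrete Laplacian on critical planar graphs (see also \cite{Chelkak_Smirnov2011}). The walk is nearest-neighbour, so the overshoot is $O(1)$ automatically, and the representation above directly yields \eqref{eq: statement lemma isoradial graphs 1} and \eqref{eq: statement lemma isoradial graphs 2}. For \eqref{eq:lem_16c2} the same asymptotic at moderate distances gives $G_N^{x_0'}(x_0',x) \geq \frac{1}{2\pi}\log(1/\eps) - O(1) \geq c$ once $\eps$ is chosen small enough depending only on $\eta$.

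The hard part will be ensuring that every constant above is uniform in the base point $x_0'$, which is essential because Lemma \ref{lem: from not too small proba to proba 1} applies all these estimates to sub-balls centred at arbitrary $x_0' \in V_N(x_0)$. For the square lattice this is automatic by translation invariance. For isoradial graphs it follows from the ellipticity condition $\theta_{xy} \in (\eta, \pi/2 - \eta)$, but one must quote the Kenyon/Chelkak--Smirnov potential-kernel asymptotics in a form that is explicitly uniform over the admissible class of graphs; this is the only place where a nontrivial external input is really needed. A secondary technical point worth flagging is that Spitzer's leading-order asymptotic holds under just finite second moment and symmetry -- classical, but deserves an explicit pointer.
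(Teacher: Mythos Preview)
Your proposal is essentially correct and follows the same architecture as the paper: the representation $G_N(x,y)=\EXPECT{x}{a(Y_{\tau_N},y)}-a(x,y)$ combined with potential-kernel asymptotics and overshoot control. Two small points deserve correction.

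First, the martingale $\norme{S_n}^2 - n\,\mathrm{tr}(\mathcal{G})$ does \emph{not} directly give $\EXPECT{x_0'}{\tau_N}=O(N^2)$: optional stopping yields $\mathrm{tr}(\mathcal{G})\,\mathbb{E}[\tau_N\wedge n]=\mathbb{E}\norme{S_{\tau_N\wedge n}}^2$, and the right side involves the very overshoot you are trying to bound, so the argument is circular. The paper sidesteps this entirely: it never bounds $\mathbb{E}[\tau_N]$, but instead splits
\[
\PROB{0}{\text{overshoot}\geq M}\leq \PROB{0}{\exists i\leq \lambda N^2:\ \abs{X_i}\geq M}+\PROB{0}{\tau_N>\lambda N^2},
\]
bounds the second term by $\PROB{0}{S_{\lambda N^2}\in V_N}\leq C/\lambda$ via the heat-kernel estimate $\PROB{0}{S_n=x}\leq C/n$ (Theorem~2.3.9 of \cite{lawler_limic_2010}), and optimises $\lambda=M/N$. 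Your Wald-type route is fine \emph{once} you have $\mathbb{E}[\tau_N]=O(N^2)$, but you need a separate input (e.g.\ Donsker plus geometric trials) to get it.

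Second, under only finite second moment the potential kernel satisfies $a(z)=\frac{1}{\pi\sqrt{\det\mathcal{G}}}\log\abs{z}+o(\log\abs{z})$, not $+O(1)$; the sharper $O(1)$ requires higher moments. The paper uses (and cites, Theorem~4.4.6 of \cite{lawler_limic_2010}) the weaker $o(\log\abs{z})$ form, which is all that is needed for \eqref{eq: second statement lemma1}--\eqref{eq: second statement lemma2}. Your isoradial argument matches the paper's, which likewise just cites Kenyon and Chelkak--Smirnov for the uniform potential-kernel asymptotics.
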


\begin{proof}
\emph{Square lattice.}
We first start to prove \eqref{eq: first statement lemma}. By translation invariance, we can assume that $x_0' = 0$. We consider the discrete time random $(S_i)_{i \geq 0}$ associated and we are going to abusively write $\tau_N$ to denote the first time the discrete time walk exits $V_N$.
Take $\lambda >0$ to be chosen later on. The probability we are interested in is not larger than
\begin{align*}
\PROB{0}{ d_\Gamma \left( S_{\tau_N-1}, S_{\tau_N} \right) \geq M }
& \leq \PROB{0}{ \exists i \leq \tau_N -1, d_\Gamma (S_i, S_{i+1}) \geq M } \\
& \leq \PROB{0}{ \exists i \leq \lambda N^2 -1, d_\Gamma (S_i, S_{i+1}) \geq M } + \PROB{0}{\tau_N > \lambda N^2}.
\end{align*}
As the increments have a finite variance, the first term on the right hand side is not larger than $C \lambda N^2 / M^2$ for some $C>0$ by the union bound. Secondly,
\[
\PROB{0}{\tau_N > \lambda N^2} \leq \PROB{0}{d_\Gamma\left(0,S_{\lambda N^2} \right) \leq N}.
\]
Theorem 2.3.9 of \cite{lawler_limic_2010} gives estimates on the heat kernel and in particular implies that there exists $C>0$ such that for all $x \in \Z^2$, $\PROB{0}{S_i = x} \leq C/i$. Hence
\[
\PROB{0}{\tau_N > \lambda N^2} \leq C' / \lambda.
\]
We obtain \eqref{eq: first statement lemma} by taking $\lambda = M/N$.

Now, \eqref{eq: second statement lemma1} and \eqref{eq: second statement lemma2} are consequences of the estimate on the potential kernel $a(x)$ made in Theorem 4.4.6 of \cite{lawler_limic_2010}:
\[ a(x) = \frac{1}{\pi \sqrt{\det \mathcal{G}}} \log \abs{x} + o(\log \abs{x}) \mathrm{~as~} \abs{x} \rightarrow \infty \]
which is linked to the Green function by:
\begin{equation}\label{eq:green and potential kernel}
G_N(x,y) = \sum_{z \in V_N^c} \PROB{x}{Y_{\tau_N} = z} a(y-z) - a(y-x).
\end{equation}
If $z \in V_N^c$ is such that $d_\Gamma(x_0,z) \leq N (\log N)^2$, then
\[
\frac{1}{\pi \sqrt{\det \mathcal{G}}} \log N + o_\eta (\log N) \leq  a(y-z) \leq \frac{1}{\pi \sqrt{\det \mathcal{G}}} \log N + o (\log N)
\]
where the lower bound (resp. upper bound) is satisfied by all $y \in V_{(1-\eta) N}$ (resp. $V_N$). \eqref{eq: first statement lemma} implying that $\PROB{x}{d_\Gamma \left( x_0, Y_{\tau_N} \right) \leq N (\log N)^2} = 1 + o(1)$, we are thus left to show that the elements $z$ such that $d_\Gamma(x_0, z) > N (\log N)^2$ do not contribute to the sum in the equation \eqref{eq:green and potential kernel}. Thanks to \eqref{eq: first statement lemma}, we have
\begin{align*}
& \sum_{\substack{z \in \Z^2 \\ d_\Gamma(x_0,z) > N (\log N)^2}} \PROB{x}{Y_{\tau_N} = z} \log \abs{z} \\
& \leq \sum_{p = 0}^\infty \PROB{x}{ 2^p \leq d_\Gamma \left( x_0, Y_{\tau_N} \right) / (N (\log N)^2) < 2^{p+1} } \log \left( N (\log N)^2 2^{p+1} \right) \\
& \leq \frac{C}{(\log N)^2} \sum_{p=0}^\infty \frac{1}{2^p} \log \left( N (\log N)^2 2^{p+1} \right)
\leq \frac{C'}{\log N}
\end{align*}
which goes to zero as $N$ goes to infinity. It completes the proof of \eqref{eq: second statement lemma1} and \eqref{eq: second statement lemma2}. \eqref{eq:lem_16c1} is a direct consequence of \eqref{eq: second statement lemma2}.

\smallbreak
\emph{Isoradial graphs.}
\eqref{eq: statement lemma isoradial graphs 1} and \eqref{eq: statement lemma isoradial graphs 2} are a direct consequences of Theorem 1.6.2 and Proposition 1.6.3 of \cite{lawler1996intersections} in the case of simple random walk on the square lattice. Kenyon extended this result to general isoradial graphs (see \cite{Kenyon2002} or Theorem 2.5 and Definition 2.6 of\cite{Chelkak_Smirnov2011}). \eqref{eq:lem_16c2} follows from \eqref{eq: statement lemma isoradial graphs 2}.
\end{proof}

From now on, we will work with a graph $\Gamma$ and a walk $Y$ which satisfy Assumptions \ref{ass1} - \ref{ass4}.
An upper bound on the Green function $G_N$ is already enough to prove the upper bound of Theorem \ref{th: frequent points, general case}:

\begin{proof}[Proof of the upper bound of Theorem \ref{th: frequent points, general case}]
Let $a \geq 0$ and $N \geq 1$. For every $\eps>0$ we obtain by Markov inequality:
\begin{align*}
\PROB{x_0}{ \abs{\Mc_N(a) } \geq N^{2(1-a) + \eps} }
& \leq N^{-2(1-a) - \eps } \sum_{x \in V_N} \PROB{x_0}{\ell_x^{\tau_N} \geq 2ga (\log N)^2 }.
\end{align*}
But for every $x \in V_N$, under $\prob_x$, $\ell_x^{\tau_N}$ is an exponential variable with mean $G_N(x,x)$. Hence by \eqref{eq: hypothesis upper bound G_N},
\begin{align}
\PROB{x_0}{\ell_x^{\tau_N} \geq 2ga (\log N)^2 } &
= \PROB{x_0}{\ell_x^{\tau_N} > 0} \PROB{x}{\ell_x^{\tau_N} \geq 2ga (\log N)^2 } \nonumber \\
& = \PROB{x_0}{\ell_x^{\tau_N} > 0} \exp \left( - 2ga (\log N)^2 / G_N(x,x) \right) \nonumber \\
& \leq C N^{-2a+o(1)}. \label{eq: proof upper bound}
\end{align}
The upper bound for the convergence in probability follows. To show that
\[ \limsup_{N \rightarrow \infty} \frac{\log \abs{\Mc_N(a)}}{\log N} \leq 2(1-a), \quad \quad \prob_{x_0} \mathrm{-a.s.},
\]
we observe that, taking $N = 2^n$ in \eqref{eq: proof upper bound},
\[
\PROB{x_0}{ \# \left\{ x \in V_{2^{n+1}}: \ell_x^{\tau_{2^{n+1}}} \geq 2ga \left( \log 2^n \right)^2 \right\} \geq (2^n)^{2(1-a)+\eps} }
\]
decays exponentially and so is summable.
Moreover, if $2^n \leq N < 2^{n+1}$,
\[
\abs{\Mc_N(a)} \leq \# \left\{ x \in V_{2^{n+1}}: \ell_x^{\tau_{2^{n+1}}} \geq 2ga \left( \log 2^n \right)^2 \right\}.
\]
Hence the Borel--Cantelli lemma implies that
\[
\limsup_{N \rightarrow \infty} \frac{\log \abs{\Mc_N(a)}}{\log N} \leq 2(1-a)+\eps, \quad \quad \prob_{x_0} \mathrm{-a.s.}
\]
This concludes the proof of the upper bound on $\abs{\Mc_N(a)}$. We notice that the above reasoning also shows that for all $\eps >0$, almost surely, for all $N$ large enough, $\abs{\Mc_N(1+\eps)} = 0$.
The upper bound on $\sup_{x \in V_N} \ell_x^{\tau_N}$ then follows from
\[
\left\{ \sup_{x \in V_N} \ell_x^{\tau_N} \geq 2g(1+\eps) (\log N)^2 \right\}
\subset \left\{ \abs{\Mc_N(1+\eps)} \geq 1 \right\}.
\]
\end{proof}

\subsection{Lower bound}\label{sec: Lower Bound}

We first start this section by establishing a lemma which simplifies a bit the problem: we only need to show that the probability to have a lot of thick points decays sub-polynomially. For all starting point $x_0' \in V_N$, define $\Mc_N(a,x_0')$ the set of $a$-thick points in the ball $V_N(x'_0)$:
\[
\Mc_N(a,x_0') = \left\{ x \in V_N(x_0'): \ell_x^{\tau_N(x_0')} \geq 2 ga (\log N)^2 \right\}.
\]

\begin{lemma}\label{lem: from not too small proba to proba 1}
Suppose that for all starting point $x_0' \in V_N(x_0)$, for all $a \in (0,1), \eps>0$ and $N \in \N$,
\[
\PROB{x_0'}{ \abs{\Mc_N(a,x_0')} \geq N^{2(1-a) - \eps} } \geq p_N,
\]
with $p_N = p_N(a)>0$ decaying slower than any polynomial, i.e. $\log p_N = o_{a,\eps}(\log N)$. Then for all $a \in (0,1)$,
\[
\liminf_{N \rightarrow \infty} \frac{\log \abs{\Mc_N(a)}}{\log N} \geq 2(1-a), \quad \quad \prob_{x_0} \mathrm{-a.s.}
\]
\end{lemma}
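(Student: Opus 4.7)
The plan is a bootstrap that turns the sub-polynomially small probability $p_N$ into a probability overwhelmingly close to $1$, by chopping $V_N(x_0)$ into many disjoint sub-balls of an intermediate radius $K_N$ and treating each as an independent trial via the strong Markov property. Fix $a \in (0,1)$ and $\eps > 0$, and set $K_N := \lfloor N^{1-\delta}\rfloor$, $a' := a/(1-\delta)^2$ and $\eps' := \eps/2$, with $\delta > 0$ chosen small enough that $a' \in (0,1)$ and $(1-\delta)\bigl(2(1-a') - \eps'\bigr) > 2(1-a) - \eps$. The crucial algebraic identity is $2ga'(\log K_N)^2 = 2ga(\log N)^2$, so any point that accumulates local time at least $2ga'(\log K_N)^2$ during a single excursion inside a radius-$K_N$ sub-ball contained in $V_N(x_0)$ is automatically in $\Mc_N(a)$, and one successful sub-ball already produces $K_N^{2(1-a')-\eps'} \geq N^{2(1-a)-\eps}$ thick points.

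Build a chain of stopping times $\sigma_1 < \sigma_2 < \dots$ greedily: $\sigma_{j+1}$ is the first time after the exit from $V_{K_N}(Y_{\sigma_j})$ at which $Y_t$ lies in $V_{N-K_N}(x_0)$ and at graph distance $> 3K_N$ from every previous $Y_{\sigma_i}$; set $\sigma_{j+1} = \infty$ if this does not happen before $\tau_N$, and write $J$ for the number of finite $\sigma_j$. By construction the sub-balls $V_{K_N}(Y_{\sigma_j})$ are pairwise disjoint and included in $V_N(x_0)$. The strong Markov property at $\sigma_j$ combined with the hypothesis applied at scale $K_N$ with starting point $Y_{\sigma_j}$ and parameters $(a', \eps')$ shows that, conditionally on $\mathcal{F}_{\sigma_j}$, the excursion of $Y$ inside $V_{K_N}(Y_{\sigma_j})$ contains at least $N^{2(1-a)-\eps}$ vertices with local time $\geq 2ga(\log N)^2$ with probability at least $p_{K_N}$.

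Since the sub-balls are disjoint, thick points produced by different excursions are distinct. Setting $m_N := \lceil N^{\delta/2}\rceil$ and iterating conditional expectations, on $\{J \geq m_N\}$ the probability that every sub-ball fails is at most $(1-p_{K_N})^{m_N} \leq \exp\!\bigl(-N^{\delta/2 - o(1)}\bigr)$, using $\log p_{K_N} = o_{a',\eps'}(\log K_N) = o_{a,\eps}(\log N)$. Combined with $\PROB{x_0}{J \geq m_N} \to 1$ (see the obstacle discussion below), this yields
\[
\PROB{x_0}{\abs{\Mc_N(a)} \geq N^{2(1-a)-\eps}} \geq 1 - \exp\!\bigl(-N^{\delta/2 - o(1)}\bigr) - o(1).
\]
This bound is summable along $N = 2^n$, and the monotone interpolation between $2^n$ and $2^{n+1}$ used in the upper bound extends it to all $N$. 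Borel--Cantelli then yields $\liminf_N \log \abs{\Mc_N(a)}/\log N \geq 2(1-a) - \eps$ almost surely, and sending $\eps \downarrow 0$ along a countable sequence finishes the proof.

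The main obstacle is establishing $\PROB{x_0}{J \geq m_N} \to 1$: Assumption \ref{ass4} is the key tool, as it prevents the walker from leaving $V_N$ in a single anomalous jump before encountering many fresh sub-balls, while the Green function lower bound \eqref{eq: hypothesis G_N starting point} and the packing provided by Assumption \ref{ass1} together ensure that the walker spends enough time in $V_N$ to hit $m_N$ disjoint sub-balls of radius $N^{1-\delta}$. The tuning is comfortable: since $p_{K_N}$ is only sub-polynomially small, any polynomial $m_N = N^{\delta/2}$ will dominate it, so $\delta$ can be chosen as small as needed for the numerical constraint on $(a,a',\eps,\eps')$ above.
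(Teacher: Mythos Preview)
Your bootstrap idea is the same as the paper's, but the execution leaves a genuine gap and introduces an unnecessary complication that the paper avoids.

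\textbf{The gap.} You explicitly flag the step $\PROB{x_0}{J \geq m_N} \to 1$ as ``the main obstacle'' and then do not prove it; invoking Assumption~\ref{ass4}, \eqref{eq: hypothesis G_N starting point} and Assumption~\ref{ass1} heuristically is not a proof. Worse, for Borel--Cantelli along $N=2^n$ you need not just $o(1)$ but a summable rate for $\PROB{x_0}{J < m_N}$, and you never quantify it. Finally, your $K_N = N^{1-\delta}$ with \emph{fixed} $\delta>0$ is not of the form $N^{1-o(1)}$, so Assumption~\ref{ass4} as stated in the paper does not apply to it.

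\textbf{The unnecessary complication.} Your requirement that each new centre $Y_{\sigma_{j+1}}$ lie at distance $>3K_N$ from \emph{all} previous centres is what makes $J \geq m_N$ hard to control. You do not need disjoint sub-balls: one successful excursion already yields enough thick points, and the strong Markov property gives the conditional success probability $\geq p_{K_N}$ regardless of overlap. The paper simply sets $\sigma(i) := \inf\{t>\sigma(i-1): d_\Gamma(Y_t,Y_{\sigma(i-1)}) \geq K_N\}$ and lets $i_{\max}$ be the last index with $Y_{\sigma(i)} \in V_{N-K_N}(x_0)$. Then $\{i_{\max}+1 \leq k\}$ forces some single overshoot $d_\Gamma(Y_{\sigma(i)},Y_{\sigma(i+1)}) \geq (N-K_N)/k$, which Assumption~\ref{ass4} and a union bound control directly: $\PROB{x_0}{i_{\max}+1 \leq k} \leq k^2 K_N N^{o(1)}/(N-(k+1)K_N)$.

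\textbf{How the paper tunes the scales.} Because $p_N$ is only sub-polynomial, the paper takes $k=(\log N)/p_N$ (so $(1-p_N)^k \leq e^{-\log N}$) and then chooses $K_N = p_N^2 (\log N)^{-4} N^{1-\eps_N}$, which \emph{is} of the form $N^{1-o(1)}$ and makes the overshoot bound $\leq C/(\log N)^2$. Since $K_N = N^{1-o(1)}$, the thickness rescaling is handled approximately via $a(\log N)^2 \leq (a+\delta)(\log K_N)^2$ for large $N$, rather than by your exact identity with $a'=a/(1-\delta)^2$. The upshot is a complete, summable bound $\PROB{x_0}{|\Mc_N(a)| \leq N^{2(1-a)-\eps}} \leq C/(\log N)^2$, after which Borel--Cantelli along $2^n$ finishes as you describe.
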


\begin{proof}

A similar but weaker statement appears in \cite{dembo2001} and \cite{rosen2006} where they assumed that $p_N$ was bounded away from $0$. The idea is to decompose the walk in the ball $V_N(x_0)$ into several walks in smaller balls to bootstrap the probability we are interested in.

First of all, let us remark that if $p_N \in (0,1)$ decays slower than any polynomial, then so does $\left( \inf_{n \leq N} p_n \right)_{N \geq 1}$. Consequently, we can assume without loss of generality that the sequences $p_N$ in the statement of the lemma are non increasing.

Fix $\eps>0$ and take $N$ large and $K_N \in \N$ much smaller than $N$ such that $K_N = N^{1 - o(1)}$. Let us introduce the stopping times
\[
\sigma(0) := 0 \mathrm{~and~} \forall i \geq 1, \sigma(i) := \inf \left\{ t > \sigma(i-1): d_\Gamma \left( Y_t, Y_{\sigma(i-1)} \right) \geq K_N \right\}
\]
and
\[
i_{\mathrm{max}} := \max \left\{ i \geq 0, d_\Gamma \left( x_0, Y_{\sigma(i)} \right) \leq N - K_N \right\}.
\]
Let $k \geq 1$. If $i_\mathrm{max} + 1 \geq k$, then all the walks $\left( Y_{\sigma(i) + t}, 0 \leq t \leq \sigma(i+1) - \sigma(i) \right)$, $i= 0 \dots k-1$, are contained in the walk $\left( Y_t, 0 \leq t \leq \tau_N \right)$. So by a repeated application of Markov property, we see that for all $\delta>0$, if $N$ is large enough so that $a (\log N)^2 \leq (a + \delta) (\log K_N)^2$ (which is possible by assumption on $K_N$), we have:
\begin{align*}
& \prob_{x_0} \Big( \abs{\Mc_N(a)} \leq N^{2(1 -a) - \eps} \Big) \\
& \leq \sup_{x_0' \in V_{N-K_N}(x_0)} \PROB{x_0'}{ \abs{\Mc_{K_N}(a+\delta,x_0')} \leq N^{2(1-a) - \eps} }^k + \PROB{x_0}{i_\mathrm{max} + 1 \leq k} \\
& \leq \sup_{x_0' \in V_{N-K_N}(x_0)} \PROB{x_0'}{ \abs{\Mc_{K_N}(a+\delta,x_0')} \leq K_N^{(2(1-a) - \eps)\sqrt{1 + \delta/a}} }^k + \PROB{x_0}{i_\mathrm{max} + 1 \leq k}.
\end{align*}
If $\delta >0$ is small enough we have $ (2(1-a) - \eps) \sqrt{1 + \delta / a} < 2(1-a-\delta) $. Hence with $p_N = p_N(a + \delta)$
\begin{align}
\PROB{x_0}{ \abs{\Mc_N(a)} \leq N^{ 2(1-a) - \eps} } & \leq (1 - p_{K_N})^k + \PROB{x_0}{i_\mathrm{max} + 1 \leq k} \nonumber \\
& \leq (1 - p_N)^k + \PROB{x_0}{i_\mathrm{max} + 1 \leq k}.\label{eq: proof bootstrap}
\end{align}
To conclude, we have to choose $K_N$ small enough to ensure that $i_\mathrm{max}$ is large with high probability. If the walk were a nearest neighbour random walk, we could say that $i_\mathrm{max} + 1 \geq \floor{N / K_N} ~\prob_{x_0}$-a.s. Here, the jumps may be unbounded but large jumps are costly (Assumption \ref{ass4}) so we will be able to recover a lower bound fairly similar on $i_\mathrm{max}$. By the triangle inequality, we have for all $k \geq 1$
\begin{align*}
\PROB{x_0}{i_\mathrm{max} + 1 \leq k}
& \leq 
\PROB{x_0}{ \exists i \leq k-1, d_\Gamma \left( Y_{\sigma(i)}, Y_{\sigma(i+1)} \right) \geq (N-K_N)/k } \\
& \leq
\sum_{i = 0}^{k-1} \PROB{x_0}{ Y_{\sigma(i)} \in V_{N - K_N}, d_\Gamma \left( Y_{\sigma(i)}, Y_{\sigma(i+1)} \right) \geq (N-K_N)/k } \\
& \leq
k \sup_{x_0' \in V_{N-K_N}} \PROB{x'_0}{ d_\Gamma \left( x_0', Y_{\tau_{K_N}} \right) \geq (N-K_N)/k }.
\end{align*}
Assumption \ref{ass4} allows us to bound this last probability: there exists $(\eps_N)_{N \geq 1} \subset (0, \infty)$ which converges to zero such that if $M >0$,
\begin{align*}
\PROB{x'_0}{ d_\Gamma \left( x_0', Y_{\tau_{K_N}} \right) \geq M + K_N }
& \leq
K_N N^{\eps_N} / M.
\end{align*}
Hence
\[
\PROB{x_0}{i_\mathrm{max} + 1 \leq k} \leq \frac{k^2 K_N N^{\eps_N}}{N-(k+1)K_N}.
\]
Coming back to the estimate \eqref{eq: proof bootstrap} and taking $k = (\log N)/p_N$ , we have obtained
\begin{align*}
\PROB{x_0}{ \abs{\Mc_N(a)} \leq N^{ 2(1-a) - \eps} }
& \leq
(1-p_N)^{(\log N) / p_N} + \PROB{x_0}{i_\mathrm{max} + 1 \leq (\log N) / p_N} \\
\leq &
\left( \sup_{0 < p < 1} (1-p)^{1/p} \right)^{\log N} + C \frac{(\log N)^2 K_N N^{\eps_N}}{(p_N)^2(N-(1+(\log N)/p_N)K_N)}.
\end{align*}
We can choose
\[
K_N = \frac{p_N^2}{(\log N)^4} N^{1-\eps_N}  = N^{1-o(1)}
\]
so that the previous estimates gives
\[
\PROB{x_0}{ \abs{\Mc_N(a)} \leq N^{ 2(1-a) - \eps} } \leq C / (\log N)^2.
\]
We now conclude as in the proof of the upper bound of Theorem \ref{th: frequent points, general case}. We apply the Borel--Cantelli lemma along the sequence $(2^p)_{p \in \N}$ which yields
\[
\liminf_{p \rightarrow \infty} \frac{\log \abs{\Mc_{2^p}(a)}}{\log \left( 2^p \right)} \geq 2(1-a), \quad \quad \prob_{x_0} \mathrm{-a.s.}
\]
This finishes the proof of the lemma because $\log \left(2^{p+1} \right) / \log \left( 2^p \right) \to 1$ as $p \to \infty$.
\end{proof}

As mentioned at the end of Section \ref{sec: Literature Overview and Organization of the Paper}, when we will use Eisenbaum's isomorphism, we will have to bound from above expectations of the form:
\[
\mathds{E} \left[ 1 + \frac{\phi_N(x_0)}{s} ; A \right] := \mathds{E} \left[ \left( 1 + \frac{\phi_N(x_0)}{s} \right) \mathbf{1}_A \right]
\]
for some given event $A$. We will use the following elementary lemma which we state here only for convenience:

\begin{lemma}\label{lem: upper bound event Eisenbaum}
For all $N$ large enough and for all events $A$,
\[
\mathds{E} \left[ \left( 1 + \frac{\phi_N(x_0)}{s} \right) ; A \right] \leq (\log N)^2 \mathds{P}(A) + N^{-\log N}.
\]
\end{lemma}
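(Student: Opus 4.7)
The plan is to split the expectation according to whether $|\phi_N(x_0)|$ is above or below a carefully chosen threshold $M$, so that on the "typical" event the prefactor $1+\phi_N(x_0)/s$ is bounded deterministically, while on the "atypical" event we pay only a super-polynomially small probability. Concretely, I would set $M := s\bigl( (\log N)^2 - 1 \bigr)$, chosen so that on $\{|\phi_N(x_0)| \leq M\}$ one has $1 + \phi_N(x_0)/s \leq 1 + M/s = (\log N)^2$. This immediately yields
\[
\mathds{E}\!\left[ \left(1 + \frac{\phi_N(x_0)}{s}\right) ; A \cap \{|\phi_N(x_0)| \leq M\} \right] \leq (\log N)^2 \, \mathds{P}(A),
\]
which is the first term in the target bound.

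The remaining piece is to show that $\mathds{E}[(1 + \phi_N(x_0)/s)_+ ; |\phi_N(x_0)| > M] \leq N^{-\log N}$ for $N$ large. Here I would use that under $\mathds{P}$, $\phi_N(x_0)$ is a centred Gaussian whose variance is exactly $G_N(x_0,x_0)$, which by \eqref{eq: hypothesis upper bound G_N} in Assumption \ref{ass2} satisfies $\sigma^2 := G_N(x_0,x_0) \leq g \log N + o(\log N)$. Bounding the prefactor by $1 + |\phi_N(x_0)|/s$ and applying the standard Gaussian tail estimates
\[
\mathds{P}(|\phi_N(x_0)| > M) \leq 2 e^{-M^2/(2\sigma^2)}, \qquad \mathds{E}\bigl[|\phi_N(x_0)|\,;\,|\phi_N(x_0)|>M\bigr] \leq C\sigma\, e^{-M^2/(2\sigma^2)},
\]
one obtains an upper bound of the form $C(1 + \sigma/s)\, e^{-M^2/(2\sigma^2)}$.

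It then remains to check that this quantity is indeed $\leq N^{-\log N}$ once $N$ is sufficiently large (depending on $s$ and $g$). With our choice $M = s((\log N)^2 - 1)$ and $\sigma^2 = O(\log N)$, the exponent $M^2/(2\sigma^2)$ is of order $s^2(\log N)^3$, so the tail bound is at most $e^{-c s^2(\log N)^3}$, which is eventually dominated by $N^{-\log N} = e^{-(\log N)^2}$. There is no real obstacle here; the only thing to watch is the dependence on $s$ in the "$N$ large enough" clause, which is harmless since in every application $s$ will be fixed (or at worst a slowly growing function of $N$) before $N \to \infty$. Adding the two contributions yields the claimed inequality.
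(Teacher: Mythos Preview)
Your proposal is correct and follows essentially the same approach as the paper: split according to whether $1 + \phi_N(x_0)/s$ exceeds $(\log N)^2$, bound the first piece by $(\log N)^2\,\mathds{P}(A)$, and control the second piece via the Gaussian tail using the variance bound $G_N(x_0,x_0) \leq g\log N + o(\log N)$ from \eqref{eq: hypothesis upper bound G_N}, yielding a term of order $\exp\bigl(-\tfrac{s^2}{2g}(\log N)^3(1+o(1))\bigr) \leq N^{-\log N}$ for $N$ large. The only cosmetic difference is that the paper thresholds directly on $1 + \phi_N(x_0)/s$ rather than on $|\phi_N(x_0)|$, which saves a line since negative values of $\phi_N(x_0)$ only help.
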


\begin{proof}
Using \eqref{eq: hypothesis upper bound G_N}, we have:
\begin{align*}
\mathds{E} \left[ \left( 1 + \frac{\phi_N(x_0)}{s} \right) ; A \right]
& \leq \left( \log N \right)^2 \mathds{P}(A) +
\mathds{E} \left[ \left( 1 + \frac{\phi_N(x_0)}{s} \right) \indic{ 1+\phi_N(x_0)/s \geq (\log N)^2} \right] \\
& \leq \left( \log N \right)^2 \mathds{P}(A) + \exp \left( -\frac{s^2}{2g} (\log N)^3(1+o(1)) \right),
\end{align*}
which concludes the lemma.
\end{proof}

We now provide our proof of the lower bound of Theorem \ref{th: frequent points, general case}. In the following, we write our arguments with the starting point $x_0$ but note that the same also works for all starting points $x_0' \in V_N(x_0)$, which is required to apply Lemma \ref{lem: from not too small proba to proba 1}.

\begin{proof}[Proof of the lower bound of Theorem \ref{th: frequent points, general case}]
During the entire proof we will fix some small $\eta>0$. To ease notations, we will denote $Q_N := Q_N(x_0)$.
Recall that if $x \in Q_N$ and $1 \leq R \leq N^{1-\eta}$, Assumption \ref{ass3} gives the existence of a subset $C_R(x) \subset Q_N$ which can be thought of as a circle of radius $R$ around $x$. We will denote $M_R^x$ the operator corresponding to taking the mean value of a function on this circle: if $f$ is a function defined on $Q_N$, then
\[
M_R^x f = \frac{1}{\# C_R(x)} \sum_{y \in C_R(x)} f(y) \in \R.
\]

We use Eisenbaum's isomorphism with some $s>0$ ($s = 1$ will do). Let $\eps_N = 1/ \sqrt{\log N}$ and for some $b>a$ (to be chosen later on, close to $a$) and $\phi_N$ a GFF independent of the walk, we define the good events at $x$:
\begin{align*}
G_N^{b,\eta}(x,\ell^{\tau_N})
& = \left\{ M_R^x \ell^{\tau_N } \leq 2g b \left( \log \frac{N}{R} \right)^2, \forall R \in (2^p)_{p \in \N} \cap \left\{ 1, \dots, N^{1-\eta} \right\} \right\}, \\
G_N^{\eta}(x,\phi_N)
& = \left\{ M_R^x \left(\frac{1}{2} (\phi_N+s)^2 \right) \leq \eps_N \left( \log \frac{N}{R} \right)^2, \forall R \in (2^p)_{p \in \N} \cap \left\{ 1, \dots, N^{1-\eta} \right\} \right\},
\end{align*}
and
\begin{equation}\label{eq:Good events, general case}
G_N^{b,\eta}(x) = G_N^{b,\eta}(x,\ell^{\tau_N}) \cap G_N^{\eta}(x,\phi_N).
\end{equation}
We require the points to be never to thick at any scales (similar to \cite{berestycki2017}).
We restrict ourselves to $Q_N$ (the subset of $V_N$ where we control the Green function $G_N$) by considering:
\[
\widetilde{\Mc}_N(a) = \Mc_N(a) \cap Q_N
\]
and we will abusively write $\abs{\widetilde{\Mc}_N(a) \cap G_N^{b,\eta}} $ when we mean $ \sum_{x \in Q_N} \indic{x \in \widetilde{\Mc}_N(a)} \indic{G_N^{b,\eta}(x)}$. The Paley--Zigmund inequality gives:
\begin{align*}
\PROB{x_0}{ \abs{ \Mc_N(a) } \geq \frac{1}{2}\EXPECTprod{x_0}{ \abs{\widetilde{\Mc}_N(a) \cap G_N^{b,\eta}} } }
& \geq \frac{1}{4} \frac{\EXPECTprod{x_0}{ \abs{\widetilde{\Mc}_N(a) \cap G_N^{b,\eta}} }^2}{\EXPECTprod{x_0}{ \abs{\widetilde{\Mc}_N(a) \cap G_N^{b,\eta}}^2 }}
\end{align*}
and it remains to estimate the first and second moments on the right hand side.

\paragraph{First Moment Estimate}\label{paragraph: first moment estimate, general case}
Firstly, we estimate the first moment without restricting to any event. Thanks to assumptions \eqref{eq: hypothesis G_N on Q_N} and \eqref{eq: hypothesis G_N starting point} and because, starting from $x$, the law of $\ell_x^{\tau_N}$ is exponential, we have:
\begin{align*}
\EXPECT{x_0}{ \abs{\widetilde{\Mc}_N(a)} } & = \sum_{x \in Q_N} \PROB{x_0}{ \ell_x^{\tau_N} \geq 2ga (\log N)^2 }
= \sum_{x \in Q_N} \frac{G_N(x_0,x)}{G_N(x,x)} \PROB{x}{\ell_x^{\tau_N} \geq 2ga (\log N)^2 } \\
& = \sum_{x \in Q_N} \frac{G_N(x_0,x)}{G_N(x,x)} \exp \left( - \frac{2ga (\log N)^2}{G_N(x,x)} \right) = N^{2 - 2a + o(1)}.
\end{align*}

To estimate the probability $\mathds{P} \left( G_N^{\eta}(x, \phi_N) \right)$ we will first derive a large deviation estimate for $M^x_R \left( (\phi_N+s)^2 \right)$. The estimate we obtain is rough and does not take into account the fact that if $R$ is large we should expect $M^x_R \left( (\phi_N+s)^2 \right)$ to be close to its mean. Writing $\Nc(\mu,\sigma^2)$ a Gaussian variable with mean $\mu$ and variance $\sigma^2$, by Jensen's inequality we have $\forall \lambda >0$ and $\forall t \in (0,1/(2g))$
\begin{align*}
\mathds{P} \left( M_R^x \left( (\phi_N+s)^2 \right) \geq \lambda \log N \right)
& \leq e^{-t \lambda} \mathds{E} \left[ \exp \left( \frac{t}{\log N} M_R^x \left( (\phi_N+s)^2 \right) \right) \right] \\
& \leq e^{-t \lambda} \frac{1}{\# C_R(x)} \sum_{y \in C_R(x)} \mathds{E} \left[ \exp \left( \frac{t}{\log N} (\phi_N(y)+s)^2 \right) \right] \\
& \leq e^{-t \lambda} \mathds{E} \left[ \exp \left\{ (tg+o(1)) \Nc(o(1),1+o(1))^2 \right\} \right]
\leq C(t) e^{-t \lambda}
\end{align*}
where $0 < C(t) < \infty$ because $tg$ is smaller than $1/2$. Hence, we have obtained: for all $t \in \left(0, 1/(2g) \right)$, there exists $C(t) \in (0, \infty)$ such that
\begin{equation}\label{eq: Tail bound on M_R^x phi^2}
\forall x \in Q_N, \forall 1 \leq R \leq N^{1-\eta}, \forall \lambda > 0,
\mathds{P} \left( M_R^x \left( (\phi_N+s)^2 \right) \geq \lambda \log N \right) \leq C(t) e^{- t \lambda}.
\end{equation}
Hence, using the above estimate with $t = 1/(4g)$ for instance, if $x \in Q_N$, the probability that the good event at $x$ linked to $\phi_N$ does not hold is:
\begin{align*}
\mathds{P} \left( G_N^{\eta}(x, \phi_N)^c \right)
& \leq \sum_{\substack{R = 2^p, ~p \in \N \\ 1 \leq R \leq N^{1-\eta}}} \mathds{P} \left( M_R^x \left( \frac{1}{2} (\phi_N+s)^2 \right) > \eps_N \left( \log \frac{N}{R} \right)^2 \right) \\
& \leq \sum_{\substack{R = 2^p, ~p \in \N \\ 1 \leq R \leq N^{1-\eta}}} \mathds{P} \left( M_R^x \left( \frac{1}{2} (\phi_N+s)^2 \right) > \eta^2 \eps_N (\log N)^2 \right) \\
& \leq \exp \left( - C(\eta) \eps_N \log N \right) \xrightarrow[ N \rightarrow \infty]{} 0
\end{align*}
for some $C(\eta) > 0$. By independence of $\phi_N$ and the local times of the random walk, we thus have
\[
\PROBprod{x_0}{\ell_x^{\tau_N} \geq 2ga (\log N)^2, G_N^{b,\eta}(x)}
= \left(1-o_\eta(1) \right) \PROB{x_0}{\ell_x^{\tau_N} \geq 2ga (\log N)^2, G_N^{b,\eta}(x,\ell^{\tau_N})}.
\]
Now, using the Eisenbaum's isomorphism and Lemma \ref{lem: upper bound event Eisenbaum}, we can bound from above the probability $\PROB{x_0}{\ell_x^{\tau_N} \geq 2g a ( \log N)^2, G_N^{b,\eta} \left( x, \ell^{\tau_N} \right)^c } $, for a given $x \in Q_N$, by the sum over $R \in \{2^p, p \in \N\} \cap [1,N^{1-\eta}]$ of
\begin{align*}
& \PROB{x_0}{ \ell_x^{\tau_N} \geq 2g a ( \log N)^2, M_R^x \left( \ell^{\tau_N} \right) \geq 2g b \left( \log \frac{N}{R} \right)^2 } \\
& \leq \mathds{E} \bigg[ \left( 1+ \frac{\phi_N(x_0)}{s} \right) ; \abs{ \phi_N(x) + s }^2 \geq 4g a (\log N)^2, M_R^x \left(\abs{ \phi_N + s }^2 \right) \geq 4g b \left( \log \frac{N}{R} \right)^2 \bigg] \\
& \leq (\log N)^2 \mathds{P} \left( \abs{ \phi_N(x) + s }^2 \geq 4g a (\log N)^2, M_R^x \left( \abs{ \phi_N + s }^2 \right) \geq 4g b \left( \log \frac{N}{R} \right)^2 \right) \\
& ~~~~+ O \left( N^{-\log N} \right).
\end{align*}
By taking $\delta = 2 \sqrt{a/g}$, we can bound from above the probability 
appearing in the last equation by:
\begin{align*}
& (2+o(1)) \mathds{P} \left( \phi_N(x) \geq (2\sqrt{ga} + o(1)) \log N, M_R^x \left( \abs{ \phi_N + s }^2 \right) \geq 4g b \left( \log \frac{N}{R} \right)^2 \right) \\
& = (2+o(1)) \mathds{P} \left( e^{\delta \phi_N(x)} \indic{ M_R^x ((\phi_N+s)^2) \geq 4gb \left( \log \frac{N}{R}  \right)^2} \geq N^{ 2 \sqrt{ga} \delta + o(1) } \right) \\
& \leq N^{-4a+o(1)} \mathds{E} \left[ e^{\delta \phi_N(x)} \indic{ M_R^x ((\phi_N+s)^2) \geq 4gb \left( \log \frac{N}{R}  \right)^2} \right] \\
& = N^{-4a + o(1)} e^{ \frac{\delta^2}{2} \mathds{E} \left[\phi_N(x)^2 \right] } \widetilde{\mathds{P}} \left( M_R^x ((\phi_N+s)^2) \geq 4gb \left( \log \frac{N}{R} \right)^2 \right)
\end{align*}
where $\widetilde{\mathds{P}}$ is the shifted probability:
\[
\frac{d \widetilde{\mathds{P}}}{d \mathds{P}} = e^{\delta \phi_N(x) - \frac{\delta^2}{2}\mathds{E} \left[ \phi_N(x)^2 \right] }.
\]
By Cameron--Martin theorem, under this new probability, $\phi_N$ has the same covariance structure but the mean of $\phi_N(y)$ is now given by:
\[
\mathrm{Cov}_{\mathds{P}}(\phi_N(y), \delta \phi_N(x)) = \left( 2 \sqrt{ga} + o_\eta(1) \right) \log \frac{N}{d_\Gamma(x,y)} = ( 2 \sqrt{ga} + o_\eta(1)) \log \frac{N}{R} \mathrm{~if~} y \in C_R(x).
\]
As we have taken $b>a$, we can apply our tail estimate \eqref{eq: Tail bound on M_R^x phi^2} to show that,
\begin{align*}
\PROB{x_0}{\ell_x^{\tau_N} \geq 2g a ( \log N)^2, G_N^{b,\eta} \left( x, \ell^{\tau_N} \right)^c } & \leq 
N^{-2a - t + o(1)}
\end{align*}
for some small $t>0$ which may depend on $\eta, a$ and $b$.
With the estimate on the first moment without the event $G_N^{b,\eta}$, this shows that:
\[
\EXPECTprod{x_0}{ \abs{\widetilde{\Mc}_N(a) \cap G_N^{b,\eta}} } \geq N^{2(1-a)+o(1)}.
\]

\paragraph{Second Moment Estimate}
To control the second moment, we adapt the ideas of \cite{berestycki2017} to our framework: let $x,y \in Q_N$ such that $d_\Gamma(x,y) \leq N^{1-\eta}$. We can find some $R \in (2^p)_{p \in \N}, R \leq N^{1-\eta}$ such that
\[
\frac{1}{2} \left( d_\Gamma(x,y) \vee 1 \right) \leq R \leq d_\Gamma(x,y) \vee 1.
\]
As before, we apply the Eisenbaum isomorphism, Lemma \ref{lem: upper bound event Eisenbaum}, an exponential Markov inequality, and using the fact that by Cauchy--Schwarz $\abs{M_R^x \phi_N} \leq \sqrt{M_R^x ((\phi_N+s)^2)}+s$, we have:
\begin{align}
& \PROBprod{x_0}{ \ell_x^{\tau_N} \mathrm{~and~} \ell_y^{\tau_N} \geq 2g a (\log N)^2, G_N^{b,\eta}(x), G_N^{b,\eta}(y) } \nonumber \\
& \leq (2+o(1)) (\log N)^2 \mathds{P} \Bigg( \phi_N(x) \mathrm{~and~} \phi_N(y) \geq \left( 2 \sqrt{ga} + o(1) \right) \log N, \nonumber \\
& ~~~~~~~~~~~~~~~~~~~~~~~~~~~~~~~~ M_R^x \phi_N \leq \left( 2 \sqrt{gb} + o_\eta(1) \right) \log \frac{N}{R} \Bigg) + N^{-\log N} \nonumber \\
& \leq N^{-4a + o(1)} \left( \frac{N}{d_\Gamma(x-y) \vee 1} \right)^{4a} \widetilde{\mathds{P}} \left( M_R^x \phi_N \leq \left( 2 \sqrt{gb} + o_\eta(1) \right) \log \frac{N}{R} \right) + N^{-\log N} \label{eq: proba second moment}
\end{align}
where $\widetilde{\mathds{P}}$ denotes the shifted probability defined by
\[
\frac{d \widetilde{\mathds{P}}}{d \mathds{P}} = e^{\delta \phi_N(x) + \delta \phi_N(y) - \frac{\delta^2}{2} \mathds{E} \left[(\phi_N(x) + \phi_N(y))^2 \right]} \mathrm{~with~} \delta = 2 \sqrt{\frac{a}{g}}.
\]
By Cameron--Martin theorem, under the probability $\widetilde{\mathds{P}}, \phi_N$ has the same covariance structure but the mean of $\phi_N(z)$ is now given by:
\[
\mathrm{Cov}_{\mathds{P}}(\phi_N(z), \delta \phi_N(x) + \delta \phi_N(y)) = \left( 4 \sqrt{ga} + o_\eta(1) \right) \log \frac{N}{R} \mathrm{~if~} z \in C_R(x)
\]
by our particular choice of R. Thanks to Assumptions \eqref{eq: hypothesis G_N on Q_N} and \eqref{eq: hypothesis existence circles, control sum}, one can check that the variance of $M_R^x \phi_N$ is equal to $\left( g + o_\eta(1) \right) \log \frac{N}{R}$. Hence
\begin{align*}
& \widetilde{\mathds{P}} \left( M_R^x \phi_N \leq \left( 2 \sqrt{gb} + o_\eta(1) \right) \log \frac{N}{R} \right) \\
& \leq 
\mathds{P} \left( \mathcal{N}(0,1) \leq - \left( 2 (2 \sqrt{a} - \sqrt{b}) + o_\eta(1) \right) \sqrt{\log \frac{N}{R}} \right) \\
& \leq \left( \frac{N}{R} \right) ^{-2 (2 \sqrt{a} - \sqrt{b})^2 + o_\eta(1) }.
\end{align*}
Again thanks to our particular choice of $R$, we have obtained:
\begin{align*}
& \PROBprod{x_0}{ \ell_x^{\tau_N}, \ell_y^{\tau_N} \geq 2g a (\log N)^2, G_N^{b,\eta}(x), G_N^{b,\eta}(y) } \\
& \leq N^{-4a+o_\eta(1)} \left( \frac{N}{d_\Gamma(x,y) \vee 1} \right)^{4a - 2(2\sqrt{a}-\sqrt{b})^2}.
\end{align*}
As $a < 1$, we can choose $b>a$ close enough to $a$ to ensure that the exponent $4a - 2(2 \sqrt{a}-\sqrt{b})^2$ is less than $2$. We can then sum over all $x,y \in Q_N$ such that $\abs{x-y} \leq N^{1-\eta}$ and use assumption \eqref{eq: hypothesis on Q_N 2} to find that:
\begin{align*}
\EXPECTprod{x_0}{ \abs{\widetilde{\Mc}_N(a) \cap G_N^{b,\eta}}^2 } & \leq N^{4(1-a) + o_\eta(1) } + \sum_{ \substack{x,y \in Q_N \\ d_\Gamma(x,y) \geq N^{1-\eta}}} \PROB{x_0}{\ell_x^{\tau_N}, \ell_y^{\tau_N}  \geq 2ga (\log N)^2}.
\end{align*}
We eventually treat our last sum noticing that the probability in this sum is not larger than (using \eqref{eq: proba second moment} without the term $\tilde{\prob}(\cdots)$):
\[
N^{-4a + o(1)} \left( \frac{N}{d_\Gamma(x,y)} \right)^{4a} \leq N^{-4a + 4a \eta + o(1) }.
\]
This shows that the second moment is not larger than $N^{4(1-a+a\eta)+o_\eta(1)}$. To come back to the probability we wanted to bound from below, this implies:
\[
\PROB{x_0}{\abs{\Mc_N(a)} \geq N^{2(1-a)+o(1)} } \geq N^{-4a \eta +o_\eta(1)}.
\]
As this is true for all $\eta>0$, it means that the probability is not less than $(1/N)^{o(1)}$. We can then use Lemma \ref{lem: from not too small proba to proba 1} to conclude the proof of Theorem \ref{th: frequent points, general case}.
\end{proof}

\section{Higher dimensions}\label{sec:Dimension 3}

\subsection{Proofs of Theorems \ref{th: dim 3 convergence measures}, \ref{th: dim 3 convergence thick points} and \ref{th: dim 3 convergence supremum} }\label{sec: dim 3 proofs theorems}

This section is devoted to the proofs of Theorems \ref{th: dim 3 convergence measures}, \ref{th: dim 3 convergence thick points} and \ref{th: dim 3 convergence supremum}. Let us first recall the setting and introduce some new notations.
Consider a continuous time (rate 1) random walk $(Y_t)_{t \geq 0}$ on $\Z^d$ for $d \geq 3$ and denote $\prob_x$ and $\expect_x$ its law and expectation starting from $x$. Writing $V_N = \{ -N, \dots, N \}^d$, we consider the first exit time of $V_N$ and the first hitting time of $x$:
\begin{equation}\label{eq: dim 3 tau_N tau_x}
\tau_N := \inf \{ t \geq 0, Y_t \notin V_N \}, \forall x \in \Z^d, \tau_x := \inf \{ t \geq 0: Y_t = x \}.
\end{equation}
We will denote $G$ and $G_N$ the Green function on $\Z^d$ and on $V_N$ respectively: for all $x,y \in \Z^d$,
\begin{equation}\label{eq: dim 3 def Green functions}
G(x,y) := \EXPECT{x}{\int_0^\infty \indic{Y_t = y} dt } \mathrm{~and~} G_N(x,y) := \EXPECT{x}{\int_0^{\tau_N} \indic{Y_t = y} dt }.
\end{equation}
Finally, we denote $g := G(0,0)$ the value of $G$ on the diagonal and $\omega(x, dz)$ the harmonic measure on $[-1,1]^d$: for all $x \in [-1,1]^d, E \subset \partial [-1,1]^d, \omega(x,E)$ denotes the probability that a Brownian motion starting from $x$ exits $[-1,1]^d$ through $E$. In the following, if $x \in \R^d$, we will denote $\floor{x}$ one element of $\Z^d$ which is closest to $x$.

Let us first recall the behaviour of $G_N$ in dimension greater or equal to $3$:

\begin{lemma}\label{lem: behaviour Green function dimension >= 3}
For all $\eta \in (0,1)$, we have the following estimates:
\begin{align*}
\forall x \in V_N, G_N(x,x) & \leq g, \\
\forall x \in V_{(1-\eta)N}, G_N(x,x) & \geq g + O_\eta \left( N^{2-d} \right).
\end{align*}
Moreover, if $a_d = d/2 ~ \Gamma(d/2 - 1) \pi^{-d/2}$, we have for all $x \neq y \in V_N$,
\[
G_N(x,y) = a_d \left( \abs{x-y}^{2-d} - q_N(x,y) \right)
\]
where $q_N(x,y) \geq O \left( \abs{x-y}^{-d} \right)$ and for all $\tilde{x}, \tilde{y} \in (-1,1)^d$, we have the following pointwise estimate:
\begin{equation}\label{eq:lem pointwise limit Green}
\lim_{N \rightarrow \infty} N^{d-2} q_N \left( \floor{ N \tilde{x} }, \floor{ N \tilde{y} } \right)
= 
\int_{\partial [-1,1]^d} \abs{ \tilde{y} - \tilde{z} }^{2-d} \omega(\tilde{x},d \tilde{z}) =: q( \tilde{x}, \tilde{y}).
\end{equation}
\end{lemma}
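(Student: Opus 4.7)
The plan is to reduce each statement to the classical asymptotics of the free Green function $G$ on $\Z^d$ (for $d \geq 3$) combined with the strong Markov property at $\tau_N$. The fundamental identity is
\[
G(x,y) = G_N(x,y) + \EXPECT{x}{ G(Y_{\tau_N}, y) } \quad \text{for all } x,y \in V_N,
\]
which expresses $G_N$ as $G$ minus a nonnegative boundary correction. The upper bound $G_N(x,x) \leq g$ is then immediate. For the matching lower bound when $x \in V_{(1-\eta)N}$, I would invoke the standard asymptotics $G(z,w) = a_d \abs{z-w}^{2-d} + O(\abs{z-w}^{-d})$ (Theorem 4.3.1 of Lawler--Limic) together with the fact that $\abs{Y_{\tau_N} - x} \geq \eta N$ almost surely; this bounds the correction term pointwise by $C_\eta N^{2-d}$ and hence in expectation by the same quantity.

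The formula for $G_N(x,y)$ with $x \neq y$ follows by substituting $G(x,y) = a_d \abs{x-y}^{2-d} - h(x,y)$ with $h(x,y) = O(\abs{x-y}^{-d})$ into the same decomposition, which yields
\[
q_N(x,y) = \frac{1}{a_d} \bigl( h(x,y) + \EXPECT{x}{ G(Y_{\tau_N}, y) } \bigr).
\]
Since the expectation is nonnegative and $\abs{h(x,y)} \leq C \abs{x-y}^{-d}$, this gives the claimed lower bound $q_N(x,y) \geq O(\abs{x-y}^{-d})$.

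The pointwise limit is the main step. Writing $x_N := \floor{N \tilde x}$ and $y_N := \floor{N \tilde y}$, the term $N^{d-2} h(x_N, y_N) = O(N^{-2})$ is negligible, so the task reduces to showing
\[
\frac{N^{d-2}}{a_d} \EXPECT{x_N}{ G(Y_{\tau_N}, y_N) } \xrightarrow[N \to \infty]{} \int_{\partial [-1,1]^d} \abs{\tilde y - \tilde z}^{2-d} \omega(\tilde x, d\tilde z).
\]
Using once more the asymptotics of $G$ and rescaling the spatial variable by $N$, the left-hand side becomes, up to a vanishing error, the expectation of $\abs{Y_{\tau_N}/N - y_N/N}^{2-d}$. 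The final ingredient is the classical invariance principle: $Y_{\tau_N}/N$ converges in distribution to the exit position of standard Brownian motion from $[-1,1]^d$ started at $\tilde x$, whose law is precisely $\omega(\tilde x, \cdot)$. Since $\tilde y \in (-1,1)^d$, the test function $\tilde z \mapsto \abs{\tilde y - \tilde z}^{2-d}$ is bounded and continuous on $\partial [-1,1]^d$, so weak convergence suffices.

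The main obstacle will be making the invariance-principle step precise: one must verify that the distribution of $Y_{\tau_N}/N$ on $\partial V_N/N$ really does converge weakly to $\omega(\tilde x, \cdot)$ on $\partial [-1,1]^d$, and that the approximation $G(z, y_N) = a_d \abs{z - y_N}^{2-d}(1 + o(1))$ holds uniformly for $z$ near $\partial V_N$. The first is classical (e.g. via a coupling with Brownian motion or a local CLT); the second causes no trouble because $\tilde y$ is an interior point, so $\abs{z - y_N} \gtrsim N$ uniformly in $z \in \partial V_N$. Everything else is soft.
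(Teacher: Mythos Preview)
Your proposal is correct and follows essentially the same route as the paper: both use the strong-Markov decomposition $G_N(x,y)=G(x,y)-\EXPECT{x}{G(Y_{\tau_N},y)}$ (the paper cites it as Proposition~1.5.8 of \cite{lawler1996intersections}), the asymptotics $G(x,y)=a_d\abs{x-y}^{2-d}+O(\abs{x-y}^{-d})$ from Theorem~4.3.1 of \cite{lawler_limic_2010}, and Donsker's invariance principle for the convergence of $Y_{\tau_N}/N$ to the Brownian exit distribution. The paper's proof is terser but the ingredients and logic are identical.
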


The proof of this lemma will be given in Section \ref{sec: dim 3 proof lemmas}. As mentioned in Section \ref{sec: Literature Overview and Organization of the Paper}, a key point is to show that all the moments of the number of thick points converge which is the purpose of the next proposition. Before stating it, let us introduce some notations.

\textbf{Notation:}
If $k \geq 1$ and $q \geq 1$, we denote by $f(k \to q)$ the number of ways to partition a set with $k$ elements into $q$ non empty sets. As this is equal to the number of surjective functions from $\{1 \dots k\}$ to $\{1 \dots q\}$ divided by $q!$, we have
\begin{equation}\label{eq:dim3 def f}
f(k \to q) = \frac{1}{q!} \sum_{i=1}^q \binom{q}{i}(-1)^{q-i} i^k.
\end{equation}
If $X$ is a topological space we will denote by $\Bc(X)$ the class of Borel sets of $X$.

\begin{proposition}\label{prop: dim 3 moment number thick points}
Let $r \geq 1$ and for all $i=1 \dots r$, take $k_i \geq 1, A_i \in \Bc([-1,1]^d)$ such that the Lebesgue measure of $\bar{A_i} \backslash A_i^\circ$ vanishes, $T_i \in \Bc(\R)$ with $\inf T_i > - \infty$. Moreover, we assume that the $A_i \times T_i$'s are pairwise disjoint. By denoting $k = k_1 + \cdots + k_r$ we define
\begin{align}\label{eq:def m(AxT)}
m ( A_i \times T_i, k_i&, i = 1 \dots r ) :=
\left( \frac{a_d}{g} \right)^k \prod_{i=1}^r \left( \int_{T_i} e^{-t/g} \frac{dt}{g} \right)^{k_i} \\
& \times \sum_{\sigma \in \mathfrak{S}_k} \int_{A_1^{k_1} \times \dots \times A_r^{k_r}} \prod_{i=0}^{k-1} \left( \abs{y_{\sigma(i+1)} - y_{\sigma(i)}}^{2-d} - q \left( y_{\sigma(i)}, y_{\sigma(i+1)} \right) \right) dy_1 \dots dy_k \nonumber
\end{align}
with the convention $y_{\sigma(0)} = 0$. 

1. Subcritical regime: let $a \in [0,1)$ and if $a=0$ assume furthermore that $T_i \subset (0,\infty)$ for all $i$. Then
\begin{equation}\label{eq:prop subcritical regime}
\lim_{N \rightarrow \infty} \EXPECT{0}{ \prod_{i=1}^r \left\{ \nu_N^a \left( A_i \times T_i \right) \right\}^{k_i} }
=
m(A_i \times T_i, k_i, i = 1 \dots r).
\end{equation}

2. At criticality, 
\begin{align}
\lim_{N \rightarrow \infty} & \EXPECT{0}{ \prod_{i=1}^r \left\{ \nu_N^1 \left( A_i \times T_i \right) \right\}^{k_i} } \nonumber \\
& =
\sum_{\substack{1 \leq q_i \leq k_i \\ i = 1 \dots r }} \left( \prod_{i=1}^r f( k_i \to q_i) \right) m \left( A_i \times T_i, q_i, i = 1 \dots r \right). \label{eq:prop criticality}
\end{align}

The previous results also hold if we replace $\nu_N^a$ by $\mu_N^a$.
\end{proposition}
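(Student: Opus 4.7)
The plan is to expand both moments as sums over $k$-tuples of points in $V_N$ (with $k = k_1 + \cdots + k_r$), use the pairwise disjointness of the $A_i \times T_i$ to reduce to sums over distinct labelled tuples up to a Stirling-number factor, and then estimate the joint probability of several prescribed points being simultaneously thick via the strong Markov property together with the Green function asymptotics of Lemma \ref{lem: behaviour Green function dimension >= 3}.

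\textbf{Combinatorial reduction.} Expanding $\nu_N^a(A_i \times T_i)^{k_i}$ and grouping indices by equality of the underlying points, the disjointness of the $A_i \times T_i$ forces every distinct point to carry at most one admissible label. Counting ordered maps of $k_i$ indices onto a set of $q_i$ distinct points (surjections) gives $q_i!\, S(k_i, q_i)$, which after converting to a sum over ordered labelled $q$-tuples ($q := q_1 + \cdots + q_r$) leaves exactly $\prod_i f(k_i \to q_i)$ in front:
\[
\mathbb{E}_0\Big[\prod_i \nu_N^a(A_i \times T_i)^{k_i}\Big] = \sum_{q_1,\ldots,q_r} \prod_i f(k_i \to q_i) \cdot \frac{1}{N^{2(1-a)k}} \sum_{\substack{(x^{(i,m)}) \\ \text{distinct}}} \mathbb{P}_0\big(\forall (i,m):\, \ell_{x^{(i,m)}}^{\tau_N} \in 2ga\log N + T_i\big).
\]
Power counting shows the inner sum grows like $N^{2(1-a)q}$, so for $a<1$ only $q_i = k_i$ survives in the limit (giving \eqref{eq:prop subcritical regime}), whereas at $a=1$ every tuple $(q_i)$ contributes at the same order (producing \eqref{eq:prop criticality}).

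\textbf{Joint probability of $q$ distinct thick points.} For fixed distinct $x_1, \ldots, x_q$, I would first discard configurations with two $x$'s at distance $\le N^{1-\delta}$ using Lemma \ref{lem: dim 3 two close points are not both thick} (whose aggregate contribution is $o(1)$), and then sum over the hitting-order permutations $\sigma \in \mathfrak{S}_q$. Using Lemma \ref{lem: dim 3 number of excursions E} to localize the accumulation of the thick local time and iterating the strong Markov property at the hitting/departure times of each $x_{\sigma(j)}$, the joint probability should factorize asymptotically as
\[
\prod_{j=1}^q \frac{G_N(x_{\sigma(j-1)}, x_{\sigma(j)})}{G_N(x_{\sigma(j)}, x_{\sigma(j)})} \cdot \mathbb{P}\big(E_j \in 2ga\log N + T_{i_{\sigma(j)}}\big) \cdot (1+o(1)),
\]
with the convention $x_{\sigma(0)} := 0$ and $E_j$ an exponential of mean $G_N(x_{\sigma(j)}, x_{\sigma(j)})$. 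Inserting the asymptotics of Lemma \ref{lem: behaviour Green function dimension >= 3}, namely $G_N(x_{\sigma(j-1)}, x_{\sigma(j)}) \sim a_d N^{2-d}(|y_{\sigma(j)} - y_{\sigma(j-1)}|^{2-d} - q(y_{\sigma(j-1)}, y_{\sigma(j)}))$ for $x = \lfloor N y \rfloor$, together with $\mathbb{P}(E_j \in 2ga\log N + T) = N^{-2a} \int_T g^{-1} e^{-t/g}\,dt\,(1+o(1))$, the powers of $N$ balance out and the sum over $x$'s becomes a Riemann sum converging (by the boundary hypothesis on $A_i$) to the multiple integral in the definition of $m(A_i \times T_i, q_i, i=1,\ldots,r)$.

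\textbf{The $\mu_N^a$ case} is strictly analogous and slightly simpler: since the $E_x$ are iid exponentials with mean exactly $g$ and independent of the walk, the joint probability factorizes exactly as $\mathbb{P}_0(\text{all } x_j \text{ visited}) \cdot \prod_j \mathbb{P}(E_{x_j} \in 2ga\log N + T_{i_j})$, and the same hitting-order expansion applies to the first factor. The \textbf{main obstacle} is making the random-walk factorization rigorous: one must show that after accumulating its thick local time at $x_{\sigma(j)}$, the walk does not revisit any previous $x_{\sigma(i)}$ with $i < j$ before hitting $x_{\sigma(j+1)}$ (or exiting $V_N$), uniformly in well-separated configurations and with error estimates tight enough to survive summation over $\sim N^{dq}$ tuples. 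This is where the transience of the walk in $d \geq 3$, together with the smallness of $G_N(x, y) = O(|x-y|^{2-d})$ on macroscopic scales, plays the essential role.
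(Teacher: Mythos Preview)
Your overall strategy matches the paper's: reduce to joint thickness probabilities for tuples of points, control these via the excursion count $E$ between the points (Lemma~\ref{lem: dim 3 number of excursions E}), isolate the event $E=q-1$ on which the walk visits each point exactly once in some permutation order and the local times become conditionally independent exponentials, and pass to the limit with the Green-function asymptotics of Lemma~\ref{lem: behaviour Green function dimension >= 3}. Your Stirling-first organisation --- in which the factors $f(k_i\to q_i)$ appear from the outset by grouping coincident points --- is a legitimate and arguably cleaner variant of the paper's route, where these factors instead emerge at criticality from a cluster decomposition of the non-separated tuples.

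There is, however, a genuine problem with your separation scale $N^{1-\delta}$. With this choice the set of ``close'' distinct $q$-tuples has cardinality of order $N^{qd-\delta d}$, while Lemma~\ref{lem: dim 3 two close points are not both thick} only buys a factor $N^{-\varepsilon}$ for the close pair; the aggregate does not vanish. Concretely, at criticality with $q=2$: bounding $\mathbb P_0(\ell_{x_1}^{\tau_N},\ell_{x_2}^{\tau_N}>2g\log N)$ by $C|x_1|^{2-d}N^{-2-\varepsilon}$ and summing over $x_1\in V_N$ and $x_2$ within $N^{1-\delta}$ of $x_1$ gives $CN^{(1-\delta)d-\varepsilon}\to\infty$. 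The same obstruction breaks your subcritical ``power counting'': to show the inner sum over $q$ distinct points is $O(N^{2(1-a)q})$ one needs an induction that forgets one member of a close pair, and the cost of that forgetting is $r_N^d$, which must be $o(N^{2(1-a)})$ --- impossible for $r_N=N^{1-\delta}$ when $a$ is close to $1$. The paper resolves both issues by taking a \emph{subpolynomial} scale $r_N=\exp(\sqrt{\log N})$: then $r_N^d=N^{o(1)}$ is negligible against any polynomial, the induction goes through in the subcritical regime, and at criticality Lemma~\ref{lem: dim 3 two close points are not both thick} is applied inside each small cluster while Lemma~\ref{lem: dim 3 number of excursions E} controls the excursions \emph{between} clusters so as to retain the $N^{-2}$ decay from every far point. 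Once you switch to $r_N=N^{o(1)}$, your sketch goes through, and the ``main obstacle'' you identify is exactly what the dichotomy $E=q-1$ versus $E\ge q$ in Lemma~\ref{lem: dim 3 number of excursions E} handles.
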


We postpone the proof of this proposition to the next section and we now explain how we can deduce Theorems \ref{th: dim 3 convergence measures}, \ref{th: dim 3 convergence thick points} and \ref{th: dim 3 convergence supremum} from it. We start with Theorem \ref{th: dim 3 convergence measures}.

\begin{proof}[Proof of Theorem \ref{th: dim 3 convergence measures}]
This proof will be decomposed in three small parts. First, we will show that the previous proposition implies the joint convergence of $(\nu_N^a(A_1 \times T_1), \dots, \nu_N^a(A_r \times T_r))$ with suitable $A_i$'s and $T_i$'s. The second part is relatively standard and shows that it then implies the convergence in law of the sequence of random measures $\{ \nu_N^a, N \geq 1\}$. The third part is dedicated to the identification of the limiting measures.

\emph{Step 1.} Take $a \in [0,1]$. Let us first show that the previous proposition implies the convergence of the joint distribution $(\nu_N^a(A_1 \times T_1), \dots, \nu_N^a(A_r \times T_r))$ where the $A_i$'s and $T_i$'s are as in the statement of the proposition. As all their moments converge, we just need to check that the limiting moments do not grow too rapidly. Take $k_1 \dots k_r \geq 1$. We notice that for all $x \in [-1,1]^d$,
\begin{align*}
0 \leq \int_{[-1,1]^d} \left( \abs{y - x}^{2-d} - q(x,y) \right) dy
& \leq \int_{[-1,1]^d} \abs{y - x}^{2-d} dy \\
& \leq \int_{[-2+x,2+x]^d} \abs{y - x}^{2-d} dy = C
\end{align*}
for some universal constant $C$ depending only on the dimension $d$. Hence there exists $C'$ depending on $d$ and on the $T_i$'s such that
\begin{equation}\label{eq: dim 3 proof thm convergence measures}
m(A_i \times T_i, k_i, i = 1 \dots r) \leq C'^k k!
\end{equation}
with $k = k_1 + \dots + k_r$. In particular, it implies that the moment generating function associated to those moments has a positive radius of convergence and they determine a unique law. It thus proves the claimed convergence in the subcritical regime. At criticality, we notice that for all $q \leq k$,
\[
\sum_{\substack{1 \leq q_i \leq k_i \\ i = 1 \dots r }} \indic{q_1 + \dots + q_r = q} \prod_{i=1}^r f( k_i \to q_i)
\]
is not larger than the number of ways to partition a set of $k$ elements into no more than $q$ parts which is equal to $q^k / (q!)$.
Using \eqref{eq: dim 3 proof thm convergence measures}, it implies that
\begin{align*}
\sum_{\substack{1 \leq q_i \leq k_i \\ i = 1 \dots r }} & \left( \prod_{i=1}^r f( k_i \to q_i) \right) m \left( A_i \times T_i, q_i, i = 1 \dots r \right)
\\
& \leq 
\sum_{q=r}^k C'^q q! \sum_{\substack{1 \leq q_i \leq k_i \\ i = 1 \dots r }} \indic{q_1 + \dots + q_r = q} \prod_{i=1}^r f( k_i \to q_i) \leq \sum_{q=r}^k C'^q q^k
\leq C'^k k^{k+1} \leq \tilde{C}^k k!.
\end{align*}
Again the radius of convergence of the associated moment generating function is positive and it gives the required convergence in the critical case as well. We will denote $\nu^a(A_1 \times T_1), \dots, \nu^a(A_r \times T_r)$ random variables which have the limiting distribution of $(\nu_N^a(A_1 \times T_1), \dots, \nu_N^a(A_r \times T_r))$.

\emph{Step 2.} We now show the convergence of the sequence of random measures $\{ \nu_N^a, N \geq 1 \}$. Recalling that the underlying topology is the topology of vague convergence, it is enough to show that for all function $\phi : [-1,1]^d \times \R \rightarrow [0, \infty)$ which are $\Cc^\infty$ with compact support (included in $[-1,1]^d \times (0,\infty)$ if $a=0$),
\[
\scalar{\nu_N^a, \phi} := \int_{[-1,1]^d \times \R} \phi(x,t) d \nu_N^a(x,t)
\]
converges in distribution. It is enough to check that for all $L$-Lipschitz function $h : \R \rightarrow \R$, $\EXPECT{0}{h( \scalar{\nu_N^a, \phi})}$ converges. By Lemma \ref{lem: decomposition function in simple functions}, we can uniformly approximate $\phi$ by a sequence of functions $(\phi_p)_{p \geq 1}$ taking the following form:
\[
\phi_p = \sum_{i=1}^p a_i^{(p)} \mathbf{1}_{ A_i^{(p)} \times T_i^{(p)} }
\]
where $A_i^{(p)} \in \Bc ([-1,1]^d)$ with the Lebesgue measure of $\bar{A}^{(p)}_i \backslash (A_i^{(p)})^\circ$ vanishing, $T_i^{(p)} \in \Bc( \R )$ with $\inf T_i^{(p)} > - \infty$ ($\inf T_i^{(p)} >0$ if $a=0$) and $a_i^{(p)} \in \C$. By the joint convergence proven in Step 1, for all $p \geq 1$,
\[
\lim_{N \rightarrow \infty} \scalar{\nu_N^a, \phi_p} \overset{\mathrm{(d)}}{=} \scalar{\nu^a, \phi_p}
\]
and we can define the law (by dominated convergence theorem for instance)
\[
\scalar{\nu^a, \phi} \overset{\mathrm{(d)}}{:=} \lim_{p \rightarrow \infty} \scalar{ \nu^a, \phi_p}.
\]
We are going to show that we can exchange the two limits, i.e. that $\scalar{\nu^a_N, \phi}$ converges in law to $\scalar{\nu^a, \phi}$.
Recalling that $h$ is $L$-Lipschitz, $\abs{\EXPECT{0}{ h(\scalar{\nu_N^a, \phi}) } - \EXPECT{0}{ h(\scalar{\nu^a, \phi}) } }$ is not larger than
\begin{align*}
&
\abs{ \EXPECT{0}{ h \left( \scalar{\nu_N^a, \phi_p} \right) } - \EXPECT{0}{ h \left( \scalar{\nu^a, \phi_p} \right) } } +
L \EXPECT{0}{ \scalar{\nu_N^a, \abs{\phi - \phi_p} } } \\
& + \abs{ \EXPECT{0}{ h(\scalar{\nu^a, \phi}) } - \EXPECT{0}{ h \left( \scalar{\nu^a, \phi_p} \right) } }.
\end{align*}
By the first part of the proof, the first term goes to zero as $N$ goes to infinity. If $t_0 \in \R$ is such that the support of $\phi$ is included in $[-1,1]^d \times (t_0, \infty)$, then the second term is not larger than
\[
L \norme{\phi-\phi_p}_\infty \EXPECT{0}{\nu_N^a([-1,1]^d \times (t_0,\infty))} \xrightarrow[N \rightarrow \infty]{} L 2^{-p} \EXPECT{0}{\nu^a([-1,1]^d \times (t_0,\infty))}.
\]
Thus the limit of the second term goes to zero when $p \to \infty$. The third term goes to zero by definition and we have proved
\[
\lim_{N \rightarrow \infty} \EXPECT{0}{ h(\scalar{\nu_N^a, \phi}) }  = \EXPECT{0}{ h(\scalar{\nu^a, \phi}) }.
\]

\emph{Step 3.} The convergence of the sequence of random measures $\{ \nu_N^a, N \geq 1\}$ has thus been proved. We are now going to identify the limit. What we did in Step 1 and Step 2 shows that the limiting distribution is entirely determined by the limiting moments from Proposition \ref{prop: dim 3 moment number thick points}.
In particular, the same conclusion holds for both $\{ \nu_N^a, N \geq 1\}$ and $\{ \mu_N^a, N \geq 1\}$ and this shows that these two sequences converge and have the same limiting distribution. We are now going to show that the limiting measures can be expressed in terms of the occupation measure $\mu_\mathrm{occ}$ and a Poisson point process as explained in Theorem \ref{th: dim 3 convergence measures}. We start with the subcritical regime ($a<1$). Take $A_i \times T_i, i=1 \dots r$, as in Proposition \ref{prop: dim 3 moment number thick points}, $k_1, \dots, k_r \geq 1$ and denote $k=k_1 + \cdots + k_r$.
As
\[
(x,y) \mapsto a_d \left( \abs{x-y}^{2-d} - q(x,y) \right)
\]
is the Green function associated to Brownian motion killed at the first exit time $\tau$ of $[-1,1]^d$ (see equation (3.15) of \cite{bass1995} for instance), it is not hard to see that
\begin{align*}
& \EXPECT{0}{ \prod_{i = 1}^r \mu_\mathrm{occ}(A_i)^{k_i} } \\
& =
\sum_{\sigma \in \mathfrak{S}_k} \int_{A_1^{k_1} \times \cdots \times A_r^{k_r}} \prod_{i=0}^{k-1} a_d \left( \abs{y_{\sigma(i+1)} - y_{\sigma(i)}} - q \left( y_{\sigma(i)}, y_{\sigma(i+1)} \right) \right) dy_1 \cdots dy_k
\end{align*}
with the convention $y_{\sigma(0)} = 0$. Thus
\begin{equation}\label{eq:dim3 proof thm convergence measures}
\Expect{ \prod_{i=1}^r \left( \frac{1}{g} \mu_\mathrm{occ} (A_i) \int_{T_i} e^{-t_i/g} \frac{dt_i}{g} \right)^{k_i} } = m(A_i \times T_i, k_i, i=1 \dots r).
\end{equation}
This proves the identification \eqref{eq:thm measure subcritical} of the limiting measure in the subcritical regime.
Let us now consider the critical case $a=1$. Recalling the definition of $f$ in \eqref{eq:dim3 def f} we see that the equation \eqref{eq:lem Poisson} of Lemma \ref{lem: dim 3 gamma laws} implies that if $P_1(\lambda_1), \dots, P_r(\lambda_r)$ are independent Poisson random variables with parameters $\lambda_1, \dots, \lambda_r$, 
\[
\Expect{P_1(\lambda_1)^{k_1} \dots P_r(\lambda_r)^{k_r}}
= \sum_{\substack{1 \leq q_i \leq k_i \\ i = 1 \dots r}} \left( \prod_{i=1}^r f(k_i \to q_i) \right) \lambda_1^{q_1} \dots \lambda_r^{q_r}.
\]
Using \eqref{eq:dim3 proof thm convergence measures}, this now shows \eqref{eq:thm measure critical} and it concludes the proof.
\end{proof}

We now move on to the proof of Theorem \ref{th: dim 3 convergence thick points}.

\begin{proof}[Proof of Theorem \ref{th: dim 3 convergence thick points}]
Take $a \in [0,1]$. In the proof of Theorem \ref{th: dim 3 convergence measures} we showed that
\[
\abs{\Mc_N(a)} / N^{2(1-a)} = \nu_N^a([-1,1]^d \times (0,\infty))
\]
converges to $\nu^a([-1,1]^d \times (0, \infty))$. The identities \eqref{eq:thm law number thick points subcritical} and \eqref{eq:thm law number thick points critical} come from \eqref{eq:thm measure subcritical} and \eqref{eq:thm measure critical} and from the fact that $\mu_\mathrm{occ}([-1,1]^d) = \tau$ a.s.
\end{proof}

We will finish this section by proving Theorem \ref{th: dim 3 convergence supremum}.

\begin{proof}[Proof of Theorem \ref{th: dim 3 convergence supremum}]
Let $t \in \R$. Because the discrete random variables
\[
\nu_N^1 \left( [-1,1]^d \times (t, \infty) \right), N \geq 1,
\]
converge in law to a Poisson distribution with parameter $\tau e^{-t/g} / g$, we have
\begin{align*}
\lim_{N \rightarrow \infty} \PROB{0}{ \sup_{x \in V_N} \ell_x^{\tau_N} - 2g \log N \leq t}
& = \lim_{N \rightarrow \infty} \PROB{0}{ \nu_N^1 \left( [-1,1]^d \times (t, \infty) \right) = 0 } \\
& = \Expect{ \exp \left( - \frac{\tau}{g} e^{-t/g} \right) }.
\end{align*}
This concludes the proof.
\end{proof}

\subsection{Proof of Proposition \ref{prop: dim 3 moment number thick points}}\label{sec: dim 3 proof proposition}

In this section, we will prove Proposition \ref{prop: dim 3 moment number thick points} stated in the previous section.
We are first going to lay the groundwork by stating some technical lemmas which will be used in the proof of Proposition \ref{prop: dim 3 moment number thick points}. These lemmas, except the next one, will be proven in Section \ref{sec: dim 3 proof lemmas}.

We start with a well-known and easy lemma that we state for convenience. This lemma is valid for more general Markov chains.

\begin{lemma}\label{lem:crucial local times hitting times}
For all subset $A \subset \Z^d$, starting from $x$, $\ell_x^{\tau_A}$ and $Y_{\tau_A} \indic{\tau_A < \infty}$ are independent.
\end{lemma}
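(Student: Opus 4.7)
The plan is to exploit the standard decomposition of the continuous-time random walk $Y$ into its embedded jump chain and its exponential holding times, and then to isolate the contributions to $\ell_x^{\tau_A}$ and to $Y_{\tau_A}$ via the strong Markov property at successive visits to $x$. If $x \notin A$, then $\tau_A = 0$ and both random variables are deterministic, so I assume $x \in A$ from now on.

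First I would write $Y$ in terms of its jump chain $(X_n)_{n \geq 0}$, with $X_0 = x$, together with an independent i.i.d.\ $\mathrm{Exp}(1)$ sequence $(E_n)_{n \geq 0}$ of holding times. Setting $\tau := \inf\{n \geq 0 : X_n \notin A\}$, one has $Y_{\tau_A} = X_\tau$ on $\{\tau < \infty\}$ and
\[
\ell_x^{\tau_A} = \sum_{n=0}^{\tau - 1} E_n \indic{X_n = x}.
\]
Let $V := \#\{0 \leq n < \tau : X_n = x\}$ be the number of visits of the jump chain to $x$ before exit. Conditionally on the entire jump chain, the holding times $E_n$ with $X_n = x$ form an i.i.d.\ $\mathrm{Exp}(1)$ sequence independent of everything else, so the conditional law of $\ell_x^{\tau_A}$ given the jump chain depends on it only through $V$ (it is a $\mathrm{Gamma}(V,1)$ law). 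It therefore suffices to prove that $V$ and $X_\tau \indic{\tau < \infty}$ are independent.

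For this I would use an excursion decomposition of the jump chain at $x$. Set $T_0 := 0$ and $T_{i+1} := \inf\{n > T_i : X_n = x\}$ with $\inf\emptyset = \infty$, and let $p := \PROB{x}{\tau < T_1}$. By iteratively applying the strong Markov property at $T_1, T_2, \ldots$, the indicators that the $i$-th excursion away from $x$ exits $A$ before returning are i.i.d.\ $\mathrm{Bernoulli}(p)$, so on $\{\tau < \infty\}$ the variable $V$ is geometric with parameter $p$. Applying the strong Markov property one more time, at $T_{V-1}$, gives that conditionally on $\{V = k, \tau < \infty\}$ the exit position $X_\tau$ has the law of $X_\tau$ under $\PROB{x}{\cdot \mid \tau < T_1}$, which does not depend on $k$. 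This is precisely the independence of $V$ and $X_\tau \indic{\tau < \infty}$, and combined with the preceding paragraph it yields the lemma.

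The only bookkeeping subtlety concerns $\{\tau = \infty\}$, on which $Y_{\tau_A} \indic{\tau_A < \infty} = 0$ deterministically so independence is vacuous there. The main (minor) obstacle is just to keep the interplay between the jump chain and the holding times straight; once the decomposition above is written down, everything reduces to applying the strong Markov property at return times to $x$.
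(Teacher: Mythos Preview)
Your proof is correct and follows essentially the same approach as the paper: both decompose the walk into excursions away from $x$, observe that the number of visits to $x$ is geometric, and note that the exit position is determined solely by the final (non-returning) excursion, independently of that count. Your version is simply more explicit about separating the jump chain from the exponential holding times, whereas the paper compresses this into a two-line excursion argument.
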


\begin{proof}
Consider a trajectory of the random walk $Y$ starting at $x$ and killed at $\tau_A$. We can decompose it according to the excursions away from $x$. There is a geometric number of independent excursions. The last one is conditioned to not come back to $x$ whereas the previous ones are i.i.d. excursions conditioned to come back to $x$. To conclude the proof, we notice that $Y_{\tau_A} \indic{\tau_A < \infty}$ depends on the last excursion whereas $\ell_x^{\tau_A}$ depends on the previous ones.
\end{proof}

\begin{remark}
This lemma implies in particular that conditioned on $Y_{\tau_A} \indic{\tau_A < \infty}$ and starting from $x$, $\ell_x^{\tau_A}$ is still an exponential variable with mean $\EXPECT{x}{\ell_x^{\tau_A}}$.
We also want to emphasise that this lemma is no longer true if the walk does not start at $x$.
\end{remark}

Now, consider the $k$-th moment of $\nu_N^a \left( A \times T \right)$. To compute it, we will have to estimate the probability that in $k$ different points, say $x_1, \dots, x_k$, the local times belong to $2ga \log N + T$.
To capture the correlations of those local times, we will denote by $E$ (to ease notation, we omit the dependence in $N$ and $x_1, \dots, x_k$) the number of excursions between the $x_i$'s before the time $\tau_N$. More precisely, if we define
\begin{align*}
\varsigma_0 & := \inf \left\{ t \geq 0: Y_t \in \{ x_1, \dots, x_k \} \right\}, \\
\forall p \geq 1, \varsigma_p & := \inf \left\{ t \geq \varsigma_{p-1}: Y_t \in \{ x_1, \dots, x_k \} \backslash \left\{ Y_{\varsigma_{p-1}} \right\} \right\},
\end{align*}
then
\begin{equation}\label{eq: dim 3 def E}
E := \max \left\{ p \in \N, \varsigma_p \leq \tau_N \right\}
\end{equation}
with the convention $\max \varnothing = - \infty$. 
The lemma below studies some properties of $E$. It roughly states that the typical way to visit all the points $x_1, \dots, x_k$ corresponds to $E = k-1$. It means that there exists a permutation $\sigma$ of the set of indices $\{1, \dots, k\}$ so that we have the following: the walk first hits $x_{\sigma(1)}$, then hits $x_{\sigma(2)}$, etc. When the walk has visited $x_{\sigma(i)}$ it does not come back to the vertices $x_{\sigma(1)}, \dots, x_{\sigma(i-1)}$.
We will denote $\mathfrak{S}_k$ the set of permutations of $\{1,\dots,k\}$.

\begin{lemma}\label{lem: dim 3 number of excursions E}
There exist $C_k >0$ and an integrable function
\begin{equation}\label{eq: dim 3 def U in lemma}
U : \left\{ (y_1, \dots, y_k ) \in \left( [-1,1]^d \backslash \{ 0 \} \right)^k: \forall i \neq j, y_i \neq y_j \right\} \rightarrow (0, \infty)
\end{equation}
such that the following is true.
For all $(y_1, \dots, y_k)$ and $(y_1', \dots, y_k')$ where $U$ is defined we have
\begin{equation}\label{eq:lem U is regular}
U (y_1, \dots, y_k) \leq \max_{0 \leq i \neq j \leq k} \left( \frac{\abs{y_i'-y_j'}}{\abs{y_i-y_j}} \right)^{d-2} U(y'_1, \dots, y'_k)
\end{equation}
with the convention $y_0 = y_0' = 0$.
For all $p \geq k-1$ and all $x_1, \dots, x_k$ non zero and pairwise distinct elements of $V_N$,
\begin{equation}\label{eq:lem number excursions E upper bound}
\PROB{0}{E = p, \tau_{x_i} <\tau_N ~\forall i = 1 \dots k} \\
\leq
C_k^{p+1} \left( \max_{i \neq j} \abs{x_i - x_j}^{2-d} \right)^{p-k+1}
N^{(2-d)k} U \left( \tfrac{x_1}{N}, \dots, \tfrac{x_k}{N} \right) .
\end{equation}
Moreover, if $x_1 = \floor{N y_1}, \dots, x_k = \floor{Ny_k}$, for $y_1, \dots, y_k$ non zero and pairwise distinct elements of $(-1,1)^d$, we have the following pointwise estimate:
\begin{align}
\lim_{N \rightarrow \infty} N^{(d-2)k} & \PROB{0}{E = k-1, \tau_{x_i} <\tau_N ~\forall i = 1 \dots k} \nonumber \\
& =
\left( \frac{a_d}{g} \right)^k \sum_{\sigma \in \mathfrak{S}_k}
\prod_{i = 0}^{k-1} \left( \abs{y_{\sigma(i+1)}-y_{\sigma(i)}}^{2-d} - q \left( y_{\sigma(i)},y_{\sigma(i+1)} \right) \right) \label{eq:lem number excursions E asymptotic}
\end{align}
with the convention $y_{\sigma(0)} = 0$.
\end{lemma}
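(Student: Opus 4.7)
The plan is to decompose the event $\{E = p,\, \tau_{x_i} < \tau_N\; \forall i\}$ according to the sequence of target points visited during the excursions. On this event, the walk produces a sequence $(x_{i_0}, x_{i_1}, \dots, x_{i_p})$ of length $p+1$ with entries in $\{x_1, \dots, x_k\}$, consecutive entries distinct, and set of values equal to $\{x_1, \dots, x_k\}$. A repeated application of the strong Markov property at the transition times $\varsigma_j$ yields
\begin{equation*}
\PROB{0}{E = p,\, \tau_{x_i} < \tau_N \;\forall i}
\leq \sum_{(i_0, \dots, i_p)} \PROB{0}{\tau_{x_{i_0}} < \tau_N} \prod_{j=1}^{p} \PROB{x_{i_{j-1}}}{\tau_{x_{i_j}} < \tau_N},
\end{equation*}
where the sum ranges over admissible sequences. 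Lemma \ref{lem:crucial local times hitting times} is invoked to ignore the values of local times accumulated during previous excursions when bounding subsequent transition probabilities.

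For the upper bound \eqref{eq:lem number excursions E upper bound}, I would use the standard identity $\PROB{x}{\tau_y < \tau_N} = G_N(x,y)/G_N(y,y)$ and Lemma \ref{lem: behaviour Green function dimension >= 3} to bound each factor by $C\abs{x-y}^{2-d}$ with $C = C(d)$. For a given admissible sequence, I then extract the \emph{spanning} subsequence consisting of the first occurrence of each label, which defines a permutation $\sigma \in \mathfrak{S}_k$ giving the visiting order $0 \to x_{\sigma(1)} \to \dots \to x_{\sigma(k)}$. The $p-k+1$ remaining \emph{excess} transitions each contribute at most $\max_{i \neq j} \abs{x_i - x_j}^{2-d}$, while the spanning contribution is $\prod_{i=0}^{k-1} \abs{x_{\sigma(i+1)} - x_{\sigma(i)}}^{2-d}$ (with $x_{\sigma(0)} = 0$); the number of admissible sequences compatible with a given spanning permutation and a given assignment of excess transitions is bounded by some $C_k^{p-k+1}$. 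After rescaling by $N$ and summing over $\sigma$, the spanning contribution is absorbed into a function $U(y_1, \dots, y_k)$ on $([-1,1]^d\setminus\{0\})^k$. Integrability near the diagonals follows from $\int_{\abs{y}\leq R}\abs{y}^{2-d} dy < \infty$ in $d \geq 3$, and the regularity \eqref{eq:lem U is regular} is obtained from a careful construction of $U$ relying on the monotonicity of $t \mapsto t^{2-d}$.

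For the asymptotic \eqref{eq:lem number excursions E asymptotic}, the case $p = k-1$ forces every transition to be spanning, so the visiting sequence must be a full permutation $\sigma \in \mathfrak{S}_k$. Setting $A_0 = \{x_1, \dots, x_k\}$ and $A_i = A_0 \setminus \{x_{\sigma(i)}\}$ for $i \geq 1$, the strong Markov property gives the exact expression
\begin{equation*}
\sum_{\sigma \in \mathfrak{S}_k} \PROB{0}{Y_{\tau_{A_0}} = x_{\sigma(1)},\, \tau_{A_0} < \tau_N} \prod_{i=1}^{k-1} \PROB{x_{\sigma(i)}}{Y_{\tau_{A_i}} = x_{\sigma(i+1)},\, \tau_{A_i} < \tau_N}.
\end{equation*}
Each factor equals $\bigl(G_N(x_{\sigma(i)}, x_{\sigma(i+1)})/G_N(x_{\sigma(i+1)}, x_{\sigma(i+1)})\bigr)(1+o(1))$: by the strong Markov property and a union bound, the probability of first hitting $A_i$ at some $x_j$ with $j \notin \{\sigma(i), \sigma(i+1)\}$ and \emph{then} reaching $x_{\sigma(i+1)}$ is $O(N^{2(2-d)})$, negligible against the $O(N^{2-d})$ leading term. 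Substituting $G_N(x_{\sigma(i+1)}, x_{\sigma(i+1)}) \to g$ and the pointwise asymptotic $N^{d-2} G_N(\lfloor Ny \rfloor, \lfloor Ny'\rfloor) \to a_d(\abs{y-y'}^{2-d} - q(y,y'))$ from Lemma \ref{lem: behaviour Green function dimension >= 3} then identifies the limit as claimed.

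The main obstacle is the asymptotic step: since every pairwise hitting probability is of the same order $N^{2-d}$, a naive union bound on the ``detour'' event would yield an error of the same order as the leading term and destroy the factorization. The argument must leverage that a detour requires \emph{two} separate hits, producing an $O(N^{2(2-d)})$ correction that is genuinely lower order only because $d \geq 3$. A secondary but delicate point is choosing $U$ so that \eqref{eq:lem U is regular} holds with a single power $d-2$ rather than $k(d-2)$; this rules out the naive symmetric sum $\sum_\sigma \prod \abs{y_{\sigma(i+1)} - y_{\sigma(i)}}^{2-d}$ and suggests defining $U$ via an extremal quantity such as $U(y) = c(\min_{0 \leq i \neq j \leq k} \abs{y_i - y_j})^{2-d}$ with $y_0 = 0$, for which the desired bound follows directly from the monotonicity of $t \mapsto t^{2-d}$ and which is still large enough to dominate the spanning cost after summing over permutations.
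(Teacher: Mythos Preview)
Your overall decomposition via the strong Markov property, the separation into spanning and excess transitions, and the two-hit argument for the asymptotic \eqref{eq:lem number excursions E asymptotic} are all correct and match the paper. The genuine gap is in your choice of $U$. Your proposal $U(y)=c\bigl(\min_{0\le i\neq j\le k}|y_i-y_j|\bigr)^{2-d}$ is \emph{not large enough} to dominate the spanning cost. Already for $p=k-1$ the Markov bound gives, after rescaling, a sum over $\sigma\in\mathfrak S_k$ of $\prod_{i=0}^{k-1}|y_{\sigma(i+1)}-y_{\sigma(i)}|^{2-d}$, which is a product of $k$ factors, each of order $(\min|y_i-y_j|)^{2-d}$. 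Take $k=2$, $d=3$, and $|y_1|=|y_2|=|y_1-y_2|=\varepsilon$: the spanning cost is $\asymp\varepsilon^{-2}$ while your $U$ is $\asymp\varepsilon^{-1}$, so the inequality \eqref{eq:lem number excursions E upper bound} fails. The issue is compounded for $p>k-1$: the transition \emph{into} the newly discovered vertex $x_{\sigma(i+1)}$ originates from the last visited vertex, which need not be $x_{\sigma(i)}$; your spanning product $\prod|x_{\sigma(i+1)}-x_{\sigma(i)}|^{2-d}$ therefore does not even match the combinatorics of the walk.

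The paper resolves both problems simultaneously by recording, for each new discovery, the label $f(i+1)\in\{\sigma(1),\dots,\sigma(i)\}$ of the vertex visited immediately before, and setting
\[
U(y_1,\dots,y_k)=\sum_{(\sigma,f)\text{ admissible}}|y_{\sigma(1)}|^{2-d}\prod_{i=1}^{k-1}|y_{\sigma(i+1)}-y_{f(i+1)}|^{2-d}.
\]
This is exactly the object that arises from the Markov bound, it is integrable because one can peel off the variables $y_{\sigma(k)},y_{\sigma(k-1)},\dots$ one at a time (each appears in a single factor), and the regularity estimate \eqref{eq:lem U is regular} follows termwise by replacing each ratio $|y_a-y_b|^{2-d}/|y_a'-y_b'|^{2-d}$ by the maximum. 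Your worry that the exponent $d-2$ in \eqref{eq:lem U is regular} rules out such a sum is misplaced: the only use of \eqref{eq:lem U is regular} is to compare $U(\lfloor Ny\rfloor/N)$ with $U(y)$ for well-separated points, where the maximal ratio is $1+o(1)$, so the precise power is immaterial and there is no need to force $U$ down to a single-factor extremal quantity.
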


\begin{remark} It is important for us to give a better estimate than
\[
\forall p \geq k-1, \PROB{0}{E = p, \tau_{x_i} <\tau_N ~\forall i = 1 \dots k}
\leq 
C_k^p \max_i \abs{x_i}^{2-d}  \left( \max_{i \neq j} \abs{x_i - x_j}^{2-d} \right)^p
\]
because the function
\[
(y_1, \dots, y_k) \in \prod_{i=1}^k (-1,1)^d \mapsto \max_i \abs{y_i}^{2-d}  \left( \max_{i \neq j} \abs{y_i - y_j}^{2-d} \right)^{k-1} \in (0,\infty)
\]
is not integrable if $(k-1)(d-2) \geq d$.
\end{remark}

\bigbreak
As mentioned in Section \ref{sec: Literature Overview and Organization of the Paper}, in the subcritical regime we will be able to restrict ourselves to points $x_1, \dots, x_k$ which are far away from each other. At criticality we will have to deal with points which are close to each other. The following lemma shows that two distinct close points are not thick at the same time with high probability:

\begin{lemma}\label{lem: dim 3 two close points are not both thick}
For $x,y \in \Z^d$, consider a sequence $\left( \ell_x^{\infty, i} , \ell_y^{\infty,i} \right), i \geq 1$, of i.i.d. variables with the same law as $\left( \ell_x^\infty, \ell_y^\infty \right)$ under $\prob_x$. If $x \neq y$, then for all $p \geq 1$, there exists $\eps_p >0$ independent of $x$ and $y$ such that for all $t \in \R$,
\[
\Prob{ \sum_{i=1}^p \ell_x^{\infty,i} , \sum_{i=1}^p \ell_y^{\infty,i} \geq 2g\log N + gt } \leq N^{-2 - \eps_p + o(1)}.
\]
\end{lemma}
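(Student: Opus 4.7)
The approach is exponential Chebyshev, exploiting independence of the local times at $x$ and $y$ conditional on the visit counts. Set $A := 2g \log N + gt$. Decompose each $\ell_x^{\infty,i}$ as a sum of $N_x^{(i)}$ independent $\mathrm{Exp}(1)$ holding times (one per visit to $x$), so that conditionally on the visit counts $(N_x^{(i)}, N_y^{(i)})_{i=1}^p$ the random variables $S_x := \sum_i \ell_x^{\infty,i}$ and $S_y := \sum_i \ell_y^{\infty,i}$ are independent Gamma variables with shapes $\sum_i N_x^{(i)}$ and $\sum_i N_y^{(i)}$ and unit rate. Applying the standard bound $\Prob{\mathrm{Gamma}(n,1) \geq A} \leq (1-\lambda)^{-n} e^{-\lambda A}$ (valid for $\lambda \in [0,1)$) to each factor and averaging over visit counts yields
\[
\Prob{S_x \geq A, S_y \geq A} \leq e^{-2 \lambda A}\, \EXPECT{x}{(1-\lambda)^{-M}}^p,
\]
where $M := N_x + N_y$ is the total number of visits to $\{x,y\}$ in a single trajectory.

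The Laplace transform of $M$ will be controlled via the trace of $Y$ on $\{x, y\}$: by the strong Markov property, the successive visits form a Markov chain on $\{x, y, \dagger\}$ (with $\dagger$ absorbing) whose sub-stochastic transition matrix $R$ on $\{x,y\}$ has entries $R_{uv} = \PROB{u}{\tau_v < \tau_u^+}$ and absorption probabilities $\rho_u := 1 - R_{ux} - R_{uy}$. Then $M$ is the absorption time and
\[
\EXPECT{x}{z^M} = z\, \mathbf{e}_x^T (I - zR)^{-1} \rho,
\]
finite precisely when $z < 1/\lambda_+(R)$, where $\lambda_+(R)$ is the Perron eigenvalue of $R$. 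The Perron--Frobenius bound $\lambda_+(R) \leq \max_u (R_{ux} + R_{uy}) = 1 - \min(\rho_x, \rho_y)$ then reduces matters to bounding $\rho_x$ and $\rho_y$ from below.

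The key step is a uniform-in-$(x,y)$ lower bound on $\min(\rho_x, \rho_y)$. Splitting at $\tau_y$ and using $\PROB{x}{\tau_x^+ = \infty} = 1/g$ together with $\PROB{y}{\tau_x = \infty} = 1 - G(x,y)/g$,
\[
\rho_x = \PROB{x}{\tau_x^+ = \infty} - \PROB{x}{\tau_x^+ = \infty, \tau_y < \infty} = \frac{1}{g} - R_{xy}\left(1 - \frac{G(x,y)}{g}\right),
\]
with a symmetric expression for $\rho_y$. Setting $p_{xy} := G(x,y)/g$ and using $R_{xy} \leq p_{xy}$ combined with $p(1-p) \leq 1/4$ gives $g\rho_x \geq 1 - g/4$. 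The crucial fact is that $g = G(0,0) < 2$ for continuous-time simple random walk on $\Z^d$ with $d \geq 3$ (numerically, $g \approx 1.516$ at $d = 3$, decreasing in $d$), so $g\min(\rho_x, \rho_y) > 1/2$ uniformly in $x \neq y$.

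To conclude, I would take $\lambda := 1 - \lambda_+(R) - 1/\log N$. The pole of the Laplace transform of $M$ at $\lambda = 1 - \lambda_+(R)$ being simple, $\EXPECT{x}{(1-\lambda)^{-M}}^p = O((\log N)^p) = N^{o(1)}$, while
\[
e^{-2 \lambda A} \leq N^{-4g(1 - \lambda_+(R)) + o(1)} \leq N^{-4 g \min(\rho_x,\rho_y) + o(1)} \leq N^{-(4-g) + o(1)}.
\]
Since $4 - g > 2$, the lemma follows with $\eps_p := 2 - g > 0$, uniformly in $p$ and in $x \neq y$. The main obstacle is precisely the uniform lower bound on $\min(\rho_x, \rho_y)$, which, via the explicit formula above, reduces to the elementary bound $g < 2$ in dimension $d \geq 3$.
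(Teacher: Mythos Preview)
Your approach is correct in outline and takes a genuinely different route from the paper. Both arguments are exponential Chebyshev after conditioning on a suitable count, but the decompositions differ. The paper groups each trajectory into \emph{excursions between} $x$ and $y$: under $\prob_x$, the number $A$ of round trips $x\to y\to x$ is geometric with failure probability $p_{xy}^2$, and the local time accumulated at $x$ (resp.\ at $y$) per round trip is exponential with mean $\theta_{xy} := \EXPECT{x}{\ell_x^{\tau_y}}$. The identity $g = \theta_{xy}/(1-p_{xy}^2)$ (from $\EXPECT{x}{\ell_x^\infty} = \Expect{A}\,\theta_{xy}$) then yields $g(1-p_{xy})/\theta_{xy} = 1/(1+p_{xy}) > 1/2$ \emph{structurally}, with no numerical input, and this is what drives the exponent below $-2$. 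Your finer decomposition into individual visits and the trace chain on $\{x,y\}$ instead reduces everything to $g\min(\rho_x,\rho_y) > 1/2$, which you obtain from the numerical fact $g < 2$ (equivalently, return probability $< 1/2$). This is true for simple random walk on $\Z^d$, $d\geq 3$, but is not a general feature of transience; the paper's argument would carry over to any symmetric transient walk, yours would not. On the plus side, once $g<2$ is granted your computation is arguably cleaner and delivers a $p$-independent $\eps_p = 2-g$ directly.

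One small gap: your choice $\lambda = 1 - \lambda_+(R) - 1/\log N$ depends on $(x,y)$, and the claimed bound $\EXPECT{x}{(1-\lambda)^{-M}} = O(\log N)$ is not obviously uniform in $(x,y)$ (the residue at the simple pole depends on $R$). The fix is immediate: take a \emph{fixed} $\lambda$ in the interval $\bigl(1/(2g),\,(1-g/4)/g\bigr)$, which is nonempty precisely because $g<2$. Since $\min(\rho_x,\rho_y) \geq (1-g/4)/g > \lambda$ uniformly in $x\neq y$, the crude tail bound $\Prob{M \geq m+1} \leq (1-\min(\rho_x,\rho_y))^m$ gives $\EXPECT{x}{(1-\lambda)^{-M}} \leq 1/(\min(\rho_x,\rho_y)-\lambda)$, a uniform constant, while $4g\lambda > 2$ secures the exponent.
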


We have now all the ingredients we need to start the proof of Proposition \ref{prop: dim 3 moment number thick points}.

\begin{proof}[Proof of Proposition \ref{prop: dim 3 moment number thick points}]
To ease notations, we will restrict ourselves to the case of the $k$-th moment of $\nu_N^a \left( A \times T \right)$ for $A \in \Bc([-1,1]^d)$ such that the Lebesgue measure of $\bar{A}\backslash A^\circ$ vanishes and $T \in \Bc(\R)$ with $\inf T > - \infty$ ($\inf T >0$ if $a = 0$). Indeed, the proof of the general case follows almost entirely along the same lines and throughout the proof we will explain which arguments need to be changed to treat the case of mixed moments
\[
\EXPECT{0}{ \prod_{i=1}^r \left\{ \nu_N^a(A_i \times T_i) \right\}^{k_i} }.
\]
When we will refer to the general case, $k$ will denote $k_1 + \dots + k_r$.

In the following, we will take $N$ large enough so that $2ga \log N + T \subset (0,\infty)$. To ease notations, we will denote
\begin{equation}\label{eq: dim 3 proof prop def M_N and A_N}
M_N := \nu_N^a \left( A \times T \right)
\mathrm{~and~}
A_N := \{ x \in V_N: x/N \in A \}.
\end{equation}
The $k$-th moment of $M_N$ can be written as
\[
\EXPECT{0}{ \left( M_N \right)^k }
= N^{-2(1-a)k} \sum_{x_1, \dots, x_k \in A_N} \PROB{0}{\ell_{x_1}^{\tau_N}, \dots, \ell_{x_k}^{\tau_N} \in 2ga \log N + T}.
\]
For some $r_N = N^{o(1)}$ (to be chosen later on), we introduce the set of well-separated points
\[
A_{N,k} := \left\{ (x_1, \dots, x_k) \in \left( A_N \backslash \{0\} \right)^k: \min_{i \neq j} \abs{x_i - x_j} > 2r_N \right\}.
\]
The proof will be decomposed in four parts. The first one will estimate the contribution of $A_{N,k}$ to the $k$-th moment of $M_N$. This part does not need to treat the subcritical ($a<1$) and critical ($a=1$) cases separately. Then, the second part shows that the contribution of points $(x_1, \dots, x_k) \in (A_N)^k \backslash A_{N,k}$ to the $k$-th moment of $M_N$ vanishes in the subcritical regime. The third part deals with the critical case and handles the points that are close to each other. The fourth part will briefly show the results on $\mu_N^a$.

\paragraph{Contribution of points far away from each other, $\nu_N^a$.}\label{para:points_far_away}

The goal of this part is to show that for all $a \in [0,1]$,
\begin{equation}
\label{eq:proof_M_N,k}
\lim_{N \to \infty}
N^{-2(1-a)k} \sum_{(x_1, \dots, x_k) \in A_{N,k}} \PROB{0}{\ell_{x_1}^{\tau_N}, \dots, \ell_{x_k}^{\tau_N} \in 2ga \log N + T}
= m(A \times T,k).
\end{equation}
We will write
\[
M_{N,k} := N^{-2(1-a)k} \sum_{(x_1, \dots, x_k) \in A_{N,k}} \PROB{0}{\ell_{x_1}^{\tau_N}, \dots, \ell_{x_k}^{\tau_N} \in 2ga \log N + T}.
\]
For a given $x \in V_N \backslash \partial V_N$, the Lebesgue measure of the set $\{ y \in (-1,1)^d: \floor{Ny} = x \}$ is $(1/N)^d$. Hence we can write
\begin{align}\label{eq: k-th moment as an integral}
M_{N,k}
=
N^{(d-2+2a)k} \int_{\prod_{i=1}^k(-1,1)^d} & \PROB{0}{\ell_{\floor{Ny_1}}^{\tau_N}, \dots, \ell_{\floor{Ny_k}}^{\tau_N} \in 2ga \log N + T} \nonumber \\
& ~~~~\times \indic{ \left( \floor{Ny_1}, \dots, \floor{Ny_k} \right) \in A_{N,k}} dy_1 \dots dy_k.
\end{align}
We will first bound from above the integrand. This will provide us the domination we need in order to apply the dominated convergence theorem and we will be left to show the pointwise limit.

\bigbreak
Let $(x_1, \dots, x_k) \in A_{N,k}$. By definition of $E$ (equation \eqref{eq: dim 3 def E}), if the walk visits all the $x_i$'s before $\tau_N$, then $E \geq k-1$. Thus
\[
\PROB{0}{\ell_{x_1}^{\tau_N}, \dots, \ell_{x_k}^{\tau_N} \in 2ga \log N + T, E \leq k-2} = 0.
\]
In this paragraph, we will use Lemma \ref{lem: dim 3 number of excursions E} to show that the probability
\[
\PROB{0}{\ell_{x_1}^{\tau_N}, \dots, \ell_{x_k}^{\tau_N} \in 2ga \log N + T, E \geq k}
\]
is very small. First, by denoting $t := \inf T /g$, we can bound
\[
\PROB{0}{\ell_{x_1}^{\tau_N}, \dots, \ell_{x_k}^{\tau_N} \in 2ga \log N + T, E \geq k}
\leq \PROB{0}{\ell_{x_1}^{\tau_N}, \dots, \ell_{x_k}^{\tau_N} > 2ga \log N + gt, E \geq k}.
\]
Starting from $x_1$, the law of the time spent in $x_1$ before hitting $\partial V_N \cup \{x_2, \dots, x_k \}$ is an exponential law with mean at most $g$. Also, if $E = p$, the number of excursions from $x_1$ to $\{x_2, \dots, x_k \}$ before $\tau_N$ is not larger than $p$. Hence, by Lemma \ref{lem:crucial local times hitting times} conditioned on the event $\{ E = p, \tau_{x_i} < \tau_N ~\forall i \leq k \}$, the joint law $(\ell_{x_1}^{\tau_N}, \dots, \ell_{x_k}^{\tau_N})$ is stochastically dominated by the law of $k$ independent Gamma random variables with shape parameter $p+1$ and scale parameter $g$. Using the claim \eqref{eq:lem Gamma} of Lemma \ref{lem: dim 3 gamma laws} about the Gamma distribution, it implies that
\begin{align*}
& \PROB{0}{\ell_{x_1}^{\tau_N}, \dots, \ell_{x_k}^{\tau_N} > 2ga \log N + gt \lvert E = p, \tau_{x_i} < \tau_N ~\forall i \leq k} \\
& \leq N^{-2ak} e^{-kt} \sum_{q = 0}^{kp} (2a\log N + t)^q \frac{k^q}{q!}.
\end{align*}
By definition of $A_{N,k},$ $\min_{i \neq j} \abs{x_i - x_j} \geq 2 r_N$. Let $U(x_1, \dots, x_k)$ be as in Lemma \ref{lem: dim 3 number of excursions E}. Then
\begin{align}
& \PROB{0}{\ell_{x_1}^{\tau_N}, \dots, \ell_{x_k}^{\tau_N} > 2ga \log N + gt, E \geq k} \nonumber\\
& = \sum_{p \geq k} \PROB{0}{E = p, \tau_{x_i} < \tau_N ~\forall i \leq k} \nonumber \\
& ~~~~~~~~~~~\times \PROB{0}{\ell_{x_1}^{\tau_N}, \dots, \ell_{x_k}^{\tau_N} > 2ga \log N + gt \lvert E = p, \tau_{x_i} < \tau_N ~\forall i \leq k} \nonumber \\
& \leq N^{-(d-2+2a)k} e^{-kt} U \left( \frac{x_1}{N}, \dots, \frac{x_k}{N} \right) \sum_{p \geq k} \left( C_k r_N^{\frac{2-d}{k}} \right)^p \sum_{q = 0}^{kp} (2a\log N + t)^q \frac{k^q}{q!} \nonumber\\
& = N^{-(d-2+2a)k} e^{-kt} U \left( \frac{x_1}{N}, \dots, \frac{x_k}{N} \right) \sum_{q \geq 0} \frac{((2a\log N + t)k)^q}{q!} \sum_{p \geq \lceil q/k \rceil \vee k } \left( C_k r_N^{\frac{2-d}{k}} \right)^p \nonumber\\
& \leq C_k' N^{-(d-2+2a)k} e^{-kt} U \left( \frac{x_1}{N}, \dots, \frac{x_k}{N} \right) \sum_{q \geq 0} \frac{((2a\log N + t)k)^q}{q!} \left( C_k r_N^{\frac{2-d}{k}} \right)^{\lceil q/k \rceil \vee k} \nonumber\\
& \leq C_k'' r_N^{\frac{2-d}{2}} N^{-(d-2+2a)k} e^{-kt} U \left( \frac{x_1}{N}, \dots, \frac{x_k}{N} \right) \sum_{q \geq 0} \left\{ (2a\log N + t)k C_k^{\frac{1}{2k}} r_N^{\frac{2-d}{2k^2}} \right\}^q / q! \label{eq: dim 3 useful a=1 E >=k}
\end{align}
because $\left\lceil \frac{q}{k} \right\rceil \vee k \geq \frac{k}{2} + \frac{q}{2k}$ for all $q \geq 0$. If we choose $r_N = \exp \left( \sqrt{\log N} \right) =  N^{o(1)}$ for instance, then $(2a\log N + t)k C_k^{1/(2k)} r_N^{(2-d)/(2k^2)}$ goes to zero and we have obtained:
\begin{equation}\label{eq: term E >= k negligible}
\PROB{0}{\ell_{x_1}^{\tau_N}, \dots, \ell_{x_k}^{\tau_N} \geq 2ga \log N + gt, E \geq k}
\leq  o(1) N^{-(d-2+2a)k} e^{-kt} U \left( \frac{x_1}{N}, \dots, \frac{x_k}{N} \right).
\end{equation}
According to Lemma \ref{lem: dim 3 number of excursions E}, the function
$(y_1, \dots, y_k) \in (-1,1)^k \mapsto U(y_1, \dots, y_k) \in (0,\infty)$
is integrable. Moreover, the equation \eqref{eq:lem U is regular} of Lemma \ref{lem: dim 3 number of excursions E} implies that
if $y_1, \dots, y_k \in (-1,1)^d$ are such that $(\floor{Ny_1}, \dots, \floor{Ny_k}) \in A_{N,k}$, then
\[
U \left( \frac{\floor{Ny_1}}{N}, \dots, \frac{\floor{Ny_k}}{N} \right) \leq C_{k,d} U(y_1, \dots, y_k)
\]
for some $C_{k,d} >0$.
Coming back to the equation \eqref{eq: k-th moment as an integral} we have thus shown with the equation \eqref{eq: term E >= k negligible} that:
\begin{align}\label{eq: k-th moment with E = p-1}
M_{N,k}
=
o(1) +
N^{(d-2+2a)k} & \int_{\prod_{i=1}^k(-1,1)^d} dy_1 \dots dy_k \indic{ \left( \floor{Ny_1}, \dots, \floor{Ny_k} \right) \in A_{N,k}} \\
& ~~ \times \PROB{0}{\ell_{\floor{Ny_1}}^{\tau_N}, \dots, \ell_{\floor{Ny_k}}^{\tau_N} \in 2ga \log N + T, E = k-1}. \nonumber
\end{align}

\bigbreak
Our last task consists in controlling the probability appearing in the equation \eqref{eq: k-th moment with E = p-1}. By Lemma \ref{lem:crucial local times hitting times}, conditioning on the event $\{ E = k-1, \tau_{x_i} <\tau_N ~\forall i = 1 \dots k \}$, the local times $\ell_{x_i}^{\tau_N}, i = 1 \dots k$, are independent exponential variables with mean
$\EXPECT{x_i}{\ell_{x_i}^{\tau_N \wedge \min_{j \neq i} \tau_{x_j}} } \leq g$. Consequently,
\begin{align}
\prob_0 ( \ell_{x_1}^{\tau_N}, \dots, &\ell_{x_k}^{\tau_N} \in 2ga \log N + T, E = k-1 ) \nonumber\\
& \leq N^{-2ak} \left( \int_T \frac{1}{g} e^{-s/g} ds \right)^k \PROB{0}{E = k-1, \tau_{x_i} <\tau_N ~\forall i \leq k}. \label{eq: dim 3 useful for a=1 E =k-1}
\end{align}
Using the first estimate of Lemma \ref{lem: dim 3 number of excursions E}, it implies that $ M_{N,k}$ is bounded and it also provides us the domination we need to use the dominated convergence theorem.
We have already done everything we need for the pointwise convergence. Indeed, if $x_1 = \floor{N y_1}, \dots, x_k = \floor{Ny_k}$, for $y_1, \dots, y_k$ non zero and pairwise distinct elements of $(-1,1)^d$, Lemma \ref{lem: dim 3 number of excursions E} provides an explicit expression for the pointwise limit
\[
\lim_{N \rightarrow \infty} N^{(d-2)k} \PROB{0}{E = k-1, \tau_{x_i} <\tau_N ~\forall i = 1 \dots k}
\]
and a small modification of the arguments in the proof of Lemma \ref{lem: dim 3 number of excursions E} shows that
\[
\EXPECT{\floor{Ny_i}}{\ell_{\floor{Ny_i}}^{\tau_N \wedge \min_{j \neq i} \tau_{\floor{Ny_j}}} } = g + O_{y_1, \dots, y_k} \left( N^{2-d} \right).
\]
Hence
\begin{align*}
& \lim_{N \rightarrow \infty} N^{2ka}
\PROB{0}{\ell_{x_1}^{\tau_N}, \dots, \ell_{x_k}^{\tau_N} \in 2ga \log N + T \lvert E = k-1, \tau_{x_i} <\tau_N ~\forall i = 1 \dots k} \\
& = \left( \int_T e^{-s/g} \tfrac{ds}{g} \right)^k.
\end{align*}
Moreover,
\begin{align*}
\indic{\forall i \neq j, y_i \in A^\circ \backslash \{0\}, y_i \neq y_j }
& \leq
\liminf_{N \rightarrow \infty} \indic{ \left( \floor{Ny_1}, \dots, \floor{Ny_k} \right) \in A_{N,k}} \\
& \leq
\limsup_{N \rightarrow \infty} \indic{ \left( \floor{Ny_1}, \dots, \floor{Ny_k} \right) \in A_{N,k}} \leq \indic{\forall i \neq j, y_i \in \bar{A} \backslash \{0\}, y_i \neq y_j }.
\end{align*}
Notice the interior $A^\circ$ and the closure $\bar{A}$ in the previous inequalities.
As we have supposed that the Lebesgue measure of $\bar{A} \backslash A^\circ$ vanishes, putting things together leads to the convergence of $M_{N,k}$ to
\begin{align*}
\left( \frac{a_d}{g} \right)^k \left( \int_T e^{-s/g} \frac{ds}{g} \right)^k \sum_{\sigma \in \mathfrak{S}_k} & \int_{A^k}
\times \prod_{i=0}^{k-1} \left( \abs{y_{\sigma(i+1)} - y_{\sigma(i)}}^{2-d} - q(y_{\sigma(i)}, y_{\sigma(i+1)}) \right) dy_1 \dots dy_k
\end{align*}
with the convention $y_{\sigma(0)} = 0$. This completes the proof of \eqref{eq:proof_M_N,k}.

\paragraph{Subcritical regime, $\nu_N^a$.}

We now show how the previous part allows us to conclude the proof in the subcritical regime. Suppose that $a<1$. We show that the $k$-th moment of $M_N$ converges towards $m(A \times T,k)$ by induction on $k \geq 1$.
Thanks to \eqref{eq:proof_M_N,k}, it only remains to control the contribution of points $(x_1, \dots, x_k) \in (A_N)^k \backslash A_{N,k}$ to the $k$-th moment of $M_N$. This contribution is at most
\begin{align*}
C(k,d) N^{-2(1-a)k} r_N^d & \sum_{x_1, \dots, x_{k-1} \in A_N} \PROB{0}{\ell_{x_1}^{\tau_N}, \dots, \ell_{x_{k-1}}^{\tau_N} \in 2ga \log N + T} \\
& = C(k,d) N^{-2(1-a)} r_N^d \EXPECT{0}{ \left( M_N \right)^{k-1} }
\end{align*}
which goes to zero: this is clear for $k=1$ (because $r_N = N^{o(1)}$ and $a < 1$) and comes from the induction hypothesis for $k \geq 2$. With \eqref{eq:proof_M_N,k}, we have shown that
\[
\EXPECT{0}{ \left( M_N \right)^k } = m(A \times T,k) + o(1).
\]
This is exactly \eqref{eq:prop subcritical regime} in the case $r=1$. In the general case of a mixed moment, we recover the result by the exact same method.

\paragraph{At criticality, $\nu_N^a$.}
Let us now consider the critical case $a =1$.
Unlike in the subcritical regime, the points $(x_1, \dots, x_k) \in (A_N)^k \backslash A_{N,k}$ will contribute to $\EXPECT{0}{(M_N)^k}$. We first notice that the points $(x_1, \dots, x_k) \in (A_N)^k$ with one of the $x_i$'s being equal to zero do not contribute. Indeed, by ignoring the points which are within a distance $2 r_N$ to each other or to zero, which contributes at most $C r_N^d$ for every such point, we have:
\begin{align*}
\sum_{\substack{(x_1, \dots, x_k) \in (A_N)^k \\ \exists i, x_i = 0}}
& \PROB{0}{ \ell_{x_1}^{\tau_N}, \dots, \ell_{x_k}^{\tau_N} \in 2g \log N + T} \\
& \leq
C_k \sum_{l = 0}^{k-1} \left( C r_N^d \right)^{k-1-l} \sum_{\substack{\forall i = 1 \dots l, \abs{x_i} \geq 2r_N \\ \forall i \neq j, \abs{x_i-x_j} \geq 2r_N}}
\PROB{0}{\ell_0^{\tau_N}, \ell_{x_1}^{\tau_N}, \dots, \ell_{x_l}^{\tau_N} \in 2g \log N + T}.
\end{align*}
The last sum is over $l$ different points and we require the local times to be large in $l+1$ different points.
We can then use the same arguments as in Section \ref{para:points_far_away} (all the points are far away from each other) to show that this last sum is at most $CN^{-2}$. As $r_N = N^{o(1)}$ it shows that this contribution vanishes.

We are going to estimate
\begin{equation}\label{eq:proof sum close points}
\sum_{\substack{(x_1, \dots, x_k) \in (A_N \backslash \{0\})^k \backslash A_{N,k} }}
\PROB{0}{ \ell_{x_1}^{\tau_N}, \dots, \ell_{x_k}^{\tau_N} \in 2g \log N + T}.
\end{equation}
If $(x_1, \dots, x_k) \in (A_N \backslash \{0\})^k \backslash A_{N,k}$, by definition of $A_{N,k}$, it means that there are at least two balls $B(x_i, r_N)$ which overlap. In the following, we will partition the set $(A_N \backslash \{0\})^k \backslash A_{N,k}$ according to the maximum number $r$ ($r \leq k-1$) of balls which do not overlap. We will denote by $x_{i_p}$, $p=1 \dots r$, the centres of such balls and we will partition the set of indices $\sqcup_{p=1}^r I_p = \{1, \dots, k\}$ such that for all $p=1 \dots r, i \in I_p, \abs{x_i - x_{i_p}} \leq 2 r_N$. See Figure \ref{fig: decomposition}. The reader should think of the balls as small balls which are far away from each other. The choice of the partition $(I_p)$ may be not unique. In this case, we make an arbitrary choice.

\begin{figure}
\centering
\begin{tikzpicture}[scale=1]

\node (A) at (2,0.8) {};
\node (A1) at (3,0.8) {};
\node (A2) at (2,2.8) {};
\node (B) at (10,0) {};
\node (B1) at (10.7,0.2) {};
\node (B2) at (8.7,-0.8) {};
\node (B3) at (10.2,1.8) {};
\node (text) at (7,0) {} ;

\fill (B1) circle (0.05);
\fill (B2) circle (0.05);
\fill (B3) circle (0.05);

\draw (A) node [left] {$x_{i_1}$};
\draw (B) node [below] {$x_{i_2}$};
\draw (text) node [left] {$x_i, i \in I_2 \backslash \{ i_2 \}$};
\fill (A) circle (0.05);
\fill (B) circle (0.05);
\draw (A) circle (1);
\draw (B) circle (1);
\draw[dotted] (A) circle (2);
\draw[dotted] (B) circle (2);

\draw [->] (A) edge node [below] {$r_N$} (A1);
\draw [->] (A) edge node [above right] {$2r_N$} (A2);
\draw [->] (text) edge (B1);
\draw [->] (text) edge (B2);
\draw [->] (text) edge (B3);
\end{tikzpicture}
\caption{Decomposition of $(A_N \backslash \{0\})^k \backslash A_{N,k}$. The balls in solid lines do not overlap. Here $r=2$.}\label{fig: decomposition}
\end{figure}

Our decomposition is thus:
\begin{align*}
(A_N \backslash \{0\})^k \backslash A_{N,k} = \bigcup_{r = 1}^{k-1} \bigcup_{\substack{\sqcup_{p=1}^r I_p \\ = \{1,\dots,k\} }}
W_{N,k,r,(I_p)}
\end{align*}
where
\[
W_{N,k,r,(I_p)} = \left\{
(x_1, \dots, x_k) \in (A_N \backslash \{0\})^k:
\begin{array}{l}
\forall p \neq q, \exists i_p \in I_p, i_q \in I_q, \abs{x_{i_p} - x_{i_q}} > 2r_N, \\
\forall i \in I_p, \abs{x_i - x_{i_p}} \leq 2r_N
\end{array}
\right\}.
\]
For a given $W_{N,k,r,(I_p)}$, the contribution to the sum \eqref{eq:proof sum close points} of the elements $(x_1, \dots, x_k) \in W_{N,k,r,(I_p)}$ such that for all $p=1 \dots r$, for all $i,j \in I_p, x_i = x_j$ is equal to
\[
\sum_{(y_1, \dots, y_r) \in A_{N,r}} \PROB{0}{\ell_{y_1}^{\tau_N}, \dots, \ell_{y_r}^{\tau_N} \in 2g \log N + T}
\]
which converges to $m(A \times T,r)$ (see \eqref{eq:proof_M_N,k}).
As the number of ways to partition the set $\{1, \dots, k\}$ into $r$ non empty sets is exactly equal to $f(k \to r)$, the claim of the proposition is equivalent to saying that the contribution of $W_{N,k,r,(I_p)}$ to the sum \eqref{eq:proof sum close points} comes only from these points. In other words, if we denote
\[
W_{N,k,r,(I_p)}^{\neq} = \left\{ (x_1, \dots, x_k) \in W_{N,k,r,(I_p)}: \exists p = 1 \dots r, \exists i,j \in I_p, x_i \neq x_j \right\}
\]
then we are going to show that
\[
\sum_{(x_1, \dots, x_k) \in W_{N,k,r,(I_p)}^{\neq}} \PROB{0}{\ell_{x_1}^{\tau_N}, \dots, \ell_{x_k}^{\tau_N} \in 2g \log N + T} \xrightarrow[N \rightarrow \infty]{} 0.
\]
By denoting $t := \inf T / g$, we can first bound:
\[
\PROB{0}{\ell_{x_1}^{\tau_N}, \dots, \ell_{x_k}^{\tau_N} \in 2g \log N + T}
\leq
\PROB{0}{\ell_{x_1}^\infty, \dots, \ell_{x_k}^\infty > 2g \log N + gt}.
\]
If $(x_1, \dots, x_k) \in W_{N,k,r,(I_p)}^{\neq}$, then there exists $p_0 \in \{1, \dots, r\}$ and $j_{p_0} \in I_{p_0}$ such that $x_{i_{p_0}} \neq x_{j_{p_0}}$. To bound from above this last sum, for each $p \neq p_0$ we keep track of only one $x_k, k \in I_p,$ by considering $x_{i_p}$. As for all $k \in I_p$, $\abs{x_k - x_{i_p}} \leq 2r_N$, our estimate is increased by a multiplicative factor of order $r_N^d$ for each point that we forget.
For $p=p_0$, we keep track of both $x_{i_{p_0}}$ and $x_{j_{p_0}}$. Furthermore, $x_{j_{p_0}}$ will absorb all the $x_{i_p}, p \neq p_0$ which are within a distance $2r_N$ of $x_{j_{p_0}}$. This procedure implies that:
\begin{align}\label{eq: dim 3 a=1 max s}
& \sum_{(x_1, \dots, x_k) \in W_{N,k,r,(I_p)}^{\neq}} \PROB{0}{\ell_{x_1}^\infty, \dots, \ell_{x_k}^\infty > 2g \log N + gt} \\
& ~~~~~~~~~~~~~~~~~~ \leq
C \sum_{s = 1}^r (r_N^d)^{k-s-1}
\sum_{\substack{x_0, \dots, x_s \in A_N \\ x_0 \neq x_1, \abs{x_0 - x_1} \leq 2r_N\\ \forall i \neq j, \{i,j\} \neq \{0,1\}, \abs{x_i - x_j} > 2r_N}}
\PROB{0}{\ell_{x_0}^\infty, \dots, \ell_{x_s}^\infty > 2g \log N + gt} \nonumber
\end{align}
where $C>0$ may depend on $d,k,r$. We will conclude by showing that this last sum is not larger than $N^{-\eps}$ for some $\eps>0$. Take $s \in \{1, \dots, r\}$ and $(x_0, x_1, \dots, x_s)$ as in the previous sum. If $s=1$ it means that we just need to control the local times $\ell_{x_0}^\infty, \ell_{x_1}^\infty$. This has already been done in Lemma \ref{lem: dim 3 two close points are not both thick} and we are going to explain the slightly more delicate case $s \geq 2$.
The idea is fairly similar to the one we used in the subcritical regime. Let us denote $E$ the number of excursions between the sets $\{x_0,x_1\}, \{x_2\}, \dots, \{x_s\}$. First of all, let us notice that if we take $p_{\max}\geq s$, a small modification of the equation \eqref{eq: dim 3 useful a=1 E >=k} gives:
\begin{align*}
& \sum_{\substack{x_0, \dots, x_s \in A_N \\ x_0 \neq x_1, \abs{x_0 - x_1} \leq 2r_N\\ \forall i \neq j, \{i,j\} \neq \{0,1\}, \abs{x_i - x_j} > 2r_N}}
\PROB{0}{\ell_{x_0}^\infty, \dots, \ell_{x_s}^\infty > 2g \log N + gt, E \geq p_{\max}} \\
& \leq
C(s,d) (r_N)^d
\sum_{\substack{x_1, \dots, x_s \in A_N \\ \forall i \neq j, \abs{x_i - x_j} > 2r_N}}
\PROB{0}{\ell_{x_1}^\infty, \dots, \ell_{x_s}^\infty > 2g \log N + gt, E \geq p_{\max}} \\
& \leq
C e^{-st} r_N^{(p_{\max}-s)(2-d)+d} N^{-ds}
\sum_{\substack{x_1, \dots, x_s \in A_N \\ \forall i \neq j, \abs{x_i - x_j} > 2r_N}} U \left(  \frac{x_1}{N}, \dots, \frac{x_s}{N} \right)
\leq C e^{-st} r_N^{(p_{\max}-s)(2-d)+d}.
\end{align*}
Hence if $p_{\max}$ is large enough, the negative power $(p_{\max}-s)(2-d) + d$ of $r_N$ will kill the positive power $(k-s-1)d$ of $r_N$ in the equation \eqref{eq: dim 3 a=1 max s} and we are now left to control:
\[
\sum_{\substack{x_0, \dots, x_s \in A_N \\ x_0 \neq x_1, \abs{x_0 - x_1} \leq 2r_N\\ \forall i \neq j, \{i,j\} \neq \{0,1\}, \abs{x_i - x_j} > 2r_N}}
\PROB{0}{\ell_{x_0}^\infty, \dots, \ell_{x_s}^\infty > 2g \log N + gt, E < p_{\max}}.
\]

Thanks to Lemmas \ref{lem: dim 3 gamma laws} and \ref{lem: dim 3 two close points are not both thick} and using the notations in those lemmas, we have
\begin{align*}
& \PROB{0}{\ell_{x_0}^\infty, \dots, \ell_{x_s}^\infty > 2g \log N + gt \lvert E = p, \tau_{ \{ x_0,x_1\} }, \tau_{x_2}, \dots, \tau_{x_s} < \infty} \\
& \leq
\Prob{\Gamma(p+1,g) > 2g \log N + gt}^{s-1}
\Prob{ \forall \alpha = 0,1, \sum_{i=1}^{p+1} \sum_{j=1}^{A_i} \ell_{x_\alpha,j}^i > 2g \log N + gt} \\
& \leq N^{-2s - \eps_p}.
\end{align*}
By summing \eqref{eq:lem number excursions E upper bound} of Lemma \ref{lem: dim 3 number of excursions E} over all $p \geq s-1$, we also have
\begin{align*}
\PROB{0}{E = p, \tau_{ \{ x_0,x_1\} }, \tau_{x_2}, \dots, \tau_{x_s} < \infty}
& \leq
2 \max_{\alpha = 0,1} \PROB{0}{\tau_{ x_\alpha }, \tau_{x_2}, \dots, \tau_{x_s} < \infty} \\
& \leq C N^{(2-d)s} \max_{\alpha = 0,1} U \left( \frac{x_\alpha}{N}, \frac{x_2}{N}, \dots, \frac{x_s}{N} \right).
\end{align*}
We have obtained the existence of $\eps > 0$ such that
\begin{align*}
& \sum_{\substack{x_0, \dots, x_s \in A_N \\ x_0 \neq x_1, \abs{x_0 - x_1} \leq 2r_N\\ \forall i \neq j, \{i,j\} \neq \{0,1\}, \abs{x_i - x_j} \geq 2r_N}}
\PROB{0}{\ell_{x_0}^\infty, \dots, \ell_{x_s}^\infty > 2g \log N + gt, E < p_{\max}} \\
& \leq
N^{-ds - \eps} \sum_{\substack{x_0, \dots, x_s \in A_N \\ x_0 \neq x_1, \abs{x_0 - x_1} \leq 2r_N\\ \forall i \neq j, \{i,j\} \neq \{0,1\}, \abs{x_i - x_j} \geq 2r_N}}
\max_{\alpha = 0,1} U \left( \frac{x_\alpha}{N}, \frac{x_2}{N}, \dots, \frac{x_s}{N} \right) \\
& \leq
C(d)(r_N)^d N^{-ds - \eps} \sum_{\substack{x_1, \dots, x_s \in A_N \\ \forall i \neq j, \abs{x_i - x_j} \geq 2r_N}} U \left( \frac{x_1}{N}, \frac{x_2}{N}, \dots, \frac{x_s}{N} \right) \leq C (r_N)^d N^{-\eps}
\end{align*}
where we justify as before the last inequality thanks to the integrability of $U$ and by \eqref{eq:lem U is regular}.
This concludes the proof of the estimates on $\EXPECT{0}{ \left\{\nu_N^a(A \times T) \right\}^k} $ at criticality (equation \eqref{eq:prop criticality} with $r=1$).

In the general case of a mixed moment, we have to deal with points
\begin{equation*}
\left\{ (x_1, \dots, x_k) \in (A_{1N} \backslash \{0\})^{k_1} \times \dots \times (A_{rN} \backslash \{0\})^{k_r}: \exists i \neq j, \abs{x_i - x_j} \leq 2r_N \right\}.
\end{equation*}
As before, we decompose this set according to blocks of points which are close to each other. Again, only points which are equal inside a same block will contribute. As we have assumed that the $A_i \times T_i$'s are pairwise disjoint, they will not interact between each other meaning that if $1 \leq i \neq j \leq r$, if $x_i \in A_i$ and $x_j \in A_j$, either $x_i \neq x_j$ or $T_i \cap T_j = \varnothing$.
Now, take $r_i \leq k_i$ for $i=1 \dots r$.
We notice that the number of ways to partition the sets $\{1, \dots, k_i\}$ into $r_i$ non empty sets, for $i=1 \dots r$, is equal to
\[
\prod_{i=1}^r f(k_i \to r_i).
\]
Thus, the contribution of points $(x_1, \dots, x_k) \in (A_{1N} \backslash \{0\})^{k_1} \times \dots \times (A_{rN} \backslash \{0\})^{k_r}$ such that for all $i = 1 \dots r$, $\{x_{k_1 + \dots + k_{i-1} + 1}, \dots, x_{k_1 + \dots + k_i} \}$ is composed of $r_i$ well-separated points converges to
\[
\left( \prod_{i=1}^r f(k_i \to r_i) \right) m(A_i \times T_i, r_i, i=1 \dots r).
\]
This shows \eqref{eq:prop criticality} in the general case $r \geq 1$.


\paragraph{Estimates on $\mu_N^a$.} We now briefly end the proof of Proposition \ref{prop: dim 3 moment number thick points} by explaining how the results for $\mu_N^a$ are obtained. Take $a \in [0,1]$, $T \in \Bc(\R)$ and $A \subset [-1,1]^d$ such that the Lebesgue measure of $\bar{A}\backslash A^\circ$ vanishes. By definition of $f(k \to r)$ and since $(E_x)_{x \in V_N}$ are i.i.d. exponential variables with mean $g$ independent of $\Mc_N(0)$, the normalised $k$-th moment
$\EXPECT{0}{( \mu_N^a (A \times T) )^k}$ is equal to
\begin{align*}
& \frac{1}{N^{2(1-a)k}} \EXPECT{0}{ \sum_{x_1, \dots, x_k \in A_N \cap \Mc_N(0)} \indic{E_{x_1}, \dots, E_{x_k} \in 2ga \log N + T} } \\
& = \frac{1}{N^{2(1-a)k}} \sum_{r=1}^k f(k \to r) \EXPECT{0}{ \sum_{\substack{x_1, \dots, x_r \in A_N \cap \Mc_N(0) \\\forall i \neq j, x_i \neq x_j}} \indic{E_{x_1}, \dots, E_{x_r} \in 2ga \log N + T} } \\
& = \frac{1}{N^{2(1-a)k}} \sum_{r=1}^k f(k \to r) N^{-2ar} \left( \int_T e^{-s/g} \frac{ds}{g} \right)^r \EXPECT{0}{\sum_{\substack{x_1, \dots, x_r \in A_N \\\forall i \neq j, x_i \neq x_j}} \indic{\ell_{x_1}^{\tau_N}, \dots, \ell_{x_r}^{\tau_N} > 0 } }.
\end{align*}
We have already shown that
\begin{align*}
\lim_{N \rightarrow \infty}
\frac{1}{N^{2r}}\EXPECT{0}{\sum_{\substack{x_1, \dots, x_r \in A_N \\ x_i \neq x_j \forall i \neq j}} \indic{\ell_{x_1}^{\tau_N}, \dots, \ell_{x_r}^{\tau_N} > 0 } }
= m(A \times (0, \infty),r)
\end{align*}
so $\EXPECT{0}{ \left( \mu_N^a (A \times T) \right)^k }$ converges to
\begin{align*}
\sum_{r=1}^k f(k \to r) \left( \int_T e^{-s/g} \frac{ds}{g} \right)^r m(A \times (0,\infty),r) \times
\left\{
\begin{array}{ll}
1 & \mathrm{if~} a=1 \mathrm{~or~} r = k \\
0 & \mathrm{if~} a<1 \mathrm{~and~} r<k
\end{array}
\right.
\end{align*}
which is exactly the stated result. The extension to the general case of a mixed moment is obtained exactly as for $\nu_N^a$.
\end{proof}

\subsection{Proof of technical lemmas}\label{sec: dim 3 proof lemmas}

We start this section by proving Lemma \ref{lem: behaviour Green function dimension >= 3} which gives estimates on the Green function $G_N$ (defined in \eqref{eq: dim 3 def Green functions} as well as the Green function $G$ on $\Z^d$) in dimension greater of equal to $3$.

\begin{proof}[Proof of Lemma \ref{lem: behaviour Green function dimension >= 3}]
As in dimension 2, these estimates follow from \cite{lawler1996intersections} and \cite{lawler_limic_2010}: Proposition 1.5.8 in \cite{lawler1996intersections} gives
\begin{equation}\label{eq:proof Green 1}
G_N(x,y) = G(x,y) - \sum_{z \in \partial V_N} \PROB{x}{Y_{\tau_N} = z} G(z,y)
\end{equation}
and Theorem 4.3.1 in \cite{lawler_limic_2010} (or Theorem 1.5.4 in \cite{lawler1996intersections} for a slightly worse estimate) gives
\begin{equation}\label{eq:proof Green 2}
G(x,y) = a_d \abs{x-y}^{2-d} + O \left( \abs{x-y}^{-d} \right) \mathrm{~as~} \abs{x-y} \rightarrow \infty.
\end{equation}
Our two first estimates on the Green function on the diagonal follow since if $y \in V_{(1-\eta)N}$ for some $\eta > 0$, then for all $z \in \partial V_N$, $\abs{z-y} \geq \eta N$. The lower bound on $q_N(x,y)$ follows as well. We are going to explain how to obtain the pointwise limit estimate \eqref{eq:lem pointwise limit Green}. Take $\tilde{x} \neq \tilde{y} \in (-1,1)^d$. By \eqref{eq:proof Green 1} and \eqref{eq:proof Green 2}, we have
\begin{align*}
N^{d-2} G_N \left( \floor{N \tilde{x}}, \floor{N \tilde{y}} \right)
& = a_d \abs{x-y}^{2-d} - a_d \EXPECT{\floor{N \tilde{x}}}{ \abs{ \frac{Y_{\tau_N}}{N} - \tilde{y} }^{2-d} } + O_{\tilde{x},\tilde{y}} \left( N^{2-d} \right).
\end{align*}
By Donsker's invariance principle, starting from $\floor{N \tilde{x}}$, $Y_{\tau_N}/N$ converges in law to the exit distribution of $[-1,1]^d$ of Brownian motion starting from $\tilde{x}$. We thus obtain \eqref{eq:lem pointwise limit Green}.
\end{proof}

We now move on to the proof of Lemma \ref{lem: dim 3 number of excursions E}. We consider $k$ non zero and pairwise distinct points $x_1, \dots, x_k \in V_N$ and we recall the definitions of $E$ and of the stopping times $\varsigma_p$ in \eqref{eq: dim 3 def E}.

\begin{proof}[Proof of Lemma \ref{lem: dim 3 number of excursions E}]
As mentioned just before Lemma \ref{lem: dim 3 number of excursions E},
if $E = k-1$ and $\tau_{x_i} <\tau_N ~\forall i = 1 \dots k$ then the stopping times $\varsigma_p, p=0 \dots k-1,$ define a permutation $\sigma$ of the set of indices $\{1, \dots, k \}$ which keeps track of the order of visits of the set $\{ x_1, \dots, x_k \}$.
By a repeated application of Markov property, we thus have:
\begin{align}\label{eq: dim 3 proof lemma number}
& \PROB{0}{E = k-1, \tau_{x_i} <\tau_N ~\forall i = 1 \dots k}
= \sum_{ \sigma \in \mathfrak{S}_k } \PROB{0}{\tau_{x_{\sigma(1)}} < \tau_N \wedge \min_{j \neq 1} \tau_{x_{\sigma(j)}} } \\
& ~~~~~~~~~~~~~~~~~~~~~~~~~~~~~~~~\times \prod_{i=1}^{k-1} \PROB{x_{\sigma(i)}}{ \tau_{x_{\sigma(i+1)}} < \tau_N \wedge \min_{j \neq i,i+1 } \tau_{x_{\sigma(j)}} }
\PROB{x_{\sigma(k)}}{ \tau_N < \min_{j \neq k} \tau_{x_{\sigma(j)}} }. \nonumber
\end{align}
But for all $\sigma \in \mathfrak{S}_k$ and $i = 1 \dots k-1$,
\[
\PROB{x_{\sigma(i)}}{ \tau_{x_{\sigma(i+1)}} < \tau_N \wedge \min_{j \neq i,i+1 } \tau_{x_{\sigma(j)}} }
\leq 
\PROB{x_{\sigma(i)}}{ \tau_{x_{\sigma(i+1)}} < \tau_N }
= \frac{ G_N(x_{\sigma(i)}, x_{\sigma(i+1)}) }{ G_N(x_{\sigma(i+1)}, x_{\sigma(i+1)}) }.
\]
We bound from below the denominator $G_N(x_{\sigma(i+1)}, x_{\sigma(i+1)})$ by 1 and from above the numerator $G_N(x_{\sigma(i)}, x_{\sigma(i+1)})$ by $C \abs{ x_{\sigma(i)} - x_{\sigma(i+1)} }^{2-d}$ (see Lemma \ref{lem: behaviour Green function dimension >= 3}). Coming back to \eqref{eq: dim 3 proof lemma number}, this leads to
\[
\PROB{0}{E = k-1, \tau_{x_i} <\tau_N ~\forall i = 1 \dots k}
\leq C^k \sum_{\sigma \in \mathfrak{S}_k} \prod_{i=0}^{k-1} \abs{ x_{\sigma(i)} - x_{\sigma(i+1)} }^{2-d}.
\]
with the convention $x_{\sigma(0)} = 0$.

The general case $p \geq k-1$ follows from the same lines but now the order of visits of the set $\{ x_1, \dots, x_k \}$ is not as simple as before. In the following, $\sigma \in \mathfrak{S}_k$ will keep track of the order of new visits of the vertices $x_1, \dots, x_k$: $x_{\sigma(1)}$ is the first vertex visited among the $x_i$'s, $x_{\sigma(2)}$ the second one... We will focus on the transitions which explore new vertices, so we introduce the notion:
$(\sigma, f) \in \mathfrak{S}_k \times \{1, \dots, k\}^{ \{2, \dots, k \} }$ is said to be admissible if
\[
\forall i = 2 \dots k, f(i) \in \{ \sigma(1), \dots, \sigma(i-1) \}.
\]
$x_{f(i)}$ will denote the vertex visited just before visiting the vertex $x_{\sigma(i)}$.
Now we define
\begin{equation}\label{eq: dim 3 def U}
U(x_1, \dots, x_k) := \sum_{(\sigma,f) \mathrm{~admissible}} \abs{x_{\sigma(1)}}^{2-d} \prod_{i=1}^{k-1} \abs{x_{\sigma(i+1)} - x_{f(i+1)}}^{2-d}.
\end{equation}
By keeping track of the transitions where new vertices are discovered (in a chronological sense) and by noticing that all the others occur with a probability which is not larger than $C_k \max_{i \neq j} \abs{x_i - x_j}^{2-d}$, we have
\begin{align*}
\PROB{0}{E = p, \tau_{x_i} <\tau_N ~\forall i = 1 \dots k}
& \leq (C_k)^{p+1} \left( \max_{i \neq j} \abs{x_i - x_j}^{2-d} \right)^{p-k+1} U(x_1, \dots, x_k) \\
= (C_k)^{p+1} & \left( \max_{i \neq j} \abs{x_i - x_j}^{2-d} \right)^{p-k+1}
N^{(2-d)k} U \left( \frac{x_1}{N}, \dots, \frac{x_k}{N} \right) .
\end{align*}
This proves \eqref{eq:lem number excursions E upper bound}.

We notice that \eqref{eq:lem U is regular} is immediate from the definition of $(y_1, \dots, y_k) \in (-1,1)^k \mapsto U(y_1, \dots, y_k) $ and we now check that it is integrable. Take $(\sigma,f)$ admissible. There is only one occurrence of $y_{\sigma(k)}$ in the product, so we can first integrate:
\[
\int_{(-1,1)^d} \abs{y_{\sigma(k)} - y_{f(k)}}^{2-d} dy_{\sigma(k)}
\leq \int_{(-2,2)^d+y_{f(k)}} \abs{y_{\sigma(k)} - y_{f(k)}}^{2-d} dy_{\sigma(k)} = C.
\]
We then proceed inductively by integrating next with respect to $y_{\sigma(k-1)}$, and so on. This proves that $U$ is integrable.

We now turn to \eqref{eq:lem number excursions E asymptotic}. If $x_1 = \floor{N y_1}, \dots, x_k = \floor{Ny_k}$, for $y_1, \dots, y_k$ non zero and pairwise distinct elements of $(-1,1)^d$, then there exists $\eta \in (0,1)$ such that for all $N$ large enough, $x_i \in V_{(1-\eta)N}, \abs{x_i} \geq \eta N$ and for all $i \neq j, \abs{x_i - x_j} \geq \eta N$. Hence Lemma \ref{lem: behaviour Green function dimension >= 3} implies
\begin{align*}
\PROB{x_1}{ \tau_{x_2} < \tau_N \wedge \min_{j \neq 1} \tau_{x_j} }
& = \PROB{x_1}{\tau_{x_2} < \tau_N} - \PROB{x_1}{ \exists j \neq 1, \tau_{x_j} < \tau_{x_2} < \tau_N} \\
& \geq \PROB{x_1}{\tau_{x_2} < \tau_N} - (k-2) \max_{j \neq 1} \PROB{x_1}{\tau_{x_j} < \tau_N} \PROB{x_j}{\tau_{x_2} < \tau_N} \\
& \geq \PROB{x_1}{\tau_{x_2} < \tau_N} - C_k (\eta N)^{2(2-d)}
\end{align*}
which leads to:
\begin{align*}
\lim_{N \rightarrow \infty} N^{d-2} \PROB{x_1}{ \tau_{x_2} < \tau_N \wedge \min_{j \notin \{1,2 \} } \tau_{x_j} }
& =
\lim_{N \rightarrow \infty} N^{d-2} \PROB{x_1}{ \tau_{x_2} < \tau_N } \\
& = \frac{a_d}{g} \left( \abs{y_1 - y_2}^{2-d} - q(y_1,y_2) \right).
\end{align*}
We deduce \eqref{eq:lem number excursions E asymptotic} by \eqref{eq: dim 3 proof lemma number}.
\end{proof}

We now prove Lemma \ref{lem: dim 3 two close points are not both thick}.

\begin{proof}[Proof of Lemma \ref{lem: dim 3 two close points are not both thick}]

Let $x \neq y \in V_N$ and let us denote
\[
p_{xy} := \PROB{x}{\tau_y<\infty} = \PROB{y}{\tau_x<\infty}
\mathrm{~and~}
\theta_{xy} = \EXPECT{x}{\ell_x^{\tau_y}} = \EXPECT{y}{\ell_y^{\tau_x}}.
\]
By decomposing the walk along the different excursions between $x$ and $y$, by Lemma \ref{lem:crucial local times hitting times} we see that starting from $x$ the joint law of $\left( \ell_{x}^\infty, \ell_{y}^\infty \right)$ can be stochastically dominated by:
\[
\left( \ell_x^\infty, \ell_y^\infty \right)
\preceq
\left( \sum_{j=1}^{A} \ell_{x,j} , \sum_{j=1}^{A} \ell_{y,j} \right)
\]
where $A$ is a geometric random variable with failure probability
\[
\left( p_{xy}\right)^2 = \PROB{x}{\exists 0 < s < t, Y_s = y, Y_t = x}
\]
and $\ell_{x,j}, \ell_{y,j}, j \geq 1,$ are i.i.d. exponential variables with mean $\theta_{xy}$ independent from $A$. $A$ is the number of round trips between $x$ and $y$ and $\ell_{x,j}$ is the time spent in $x$ during the $j$-th round trip.
Let us mention that it is not an exact equality in distribution but only a stochastic domination. Indeed, we exactly have: starting from $x$,
\begin{equation}\label{eq:proof decomposition l_x}
\ell_x^\infty \overset{\mathrm{(d)}}{=} \sum_{j=1}^{A} \ell_{x,j},
\end{equation}
but the number of $\ell_{y,j}$'s we have to sum up is $A$ (resp. $A-1$) if the last visited vertex is $y$ (resp. $x$).
However this stochastic domination is sufficient for our purposes.

Let $p \geq 0$. For all $i = 1 \dots p+1$ we stochastically dominate as above $\left( \ell_x^{\infty,i}, \ell_y^{\infty,i} \right)$ by variables with a superscript $i$ and we have
\begin{align*}
& \Prob{ \sum_{i=1}^{p+1} \ell_x^{\infty,i} \geq 2g\log N + gt , \sum_{i=1}^{p+1} \ell_y^{\infty,i} \geq 2g\log N + gt } \\
& \leq \Prob{ \sum_{i=1}^{p+1} \sum_{j=1}^{A^i} \ell_{x,j}^i \geq 2g \log N + gt, \sum_{i=1}^{p+1} \sum_{j=1}^{A^i} \ell_{y,j}^i \geq 2g \log N + gt}.
\end{align*}
Conditioned on the value of $\sum_{i=1}^{p+1} A^i$, the variables $\sum_{i=1}^{p+1} \sum_{j=1}^{A^i} \ell_{x,j}^i$ and $\sum_{i=1}^{p+1} \sum_{j=1}^{A^i} \ell_{y,j}^i$ are two independent Gamma variables. We can thus use the claim \eqref{eq:lem Gamma} of Lemma \ref{lem: dim 3 gamma laws} and

\begin{align}
& \Prob{ \sum_{i=1}^{p+1} \ell_x^{\infty,i} \geq 2g\log N + gt , \sum_{i=1}^{p+1} \ell_y^{\infty,i} \geq 2g\log N + gt } \nonumber \\
& \leq
N^{-4g/\theta_{xy}} e^{-2t}
\sum_{n = 0}^\infty \Prob{\sum_{i=1}^{p+1} A_i = n+p+1}
\sum_{q=0}^{2(n+p)} \frac{1}{q!} \left( 4 \frac{g}{\theta_{xy}} \log N \right)^q \nonumber \\
& = N^{-4g/\theta_{xy}} e^{-2t} \left( 1 - p_{xy}^2 \right)^{p+1} \sum_{n=0}^\infty p_{xy}^{2n} \binom{n+p}{p} \sum_{q=0}^{2(n+p)} \frac{1}{q!} \left( 4 \frac{g}{\theta_{xy}} \log N \right)^q \nonumber \\
& \leq C(p,t) N^{-4g/\theta_{xy}}
\sum_{q = 0}^\infty \frac{1}{q!} \left( 4 \frac{g}{\theta_{xy}} \log N \right)^q \sum_{n \geq \left( \lceil q/2 \rceil - p \right)_+ } (n+p) \dots (n+1) p_{xy}^{2n}. \label{eq: dim 3 lemma two close points}
\end{align}

We are going to bound from above the last sum indexed by $n$.
Let us first notice that $p_{xy}$ and $\theta_{xy}$ are linked by a simple formula. Indeed, \eqref{eq:proof decomposition l_x} implies that $\EXPECT{x}{\ell_x^\infty} = \Expect{A} \Expect{\ell_{x,1}}$, meaning that $g = \theta_{xy} / \left( 1 - p_{xy}^2 \right)$.
Then
\[
\inf_{x\neq y} g(1-p_{xy})/\theta_{xy} = \inf_{x \neq y} 1/(1+p_{xy}) > 1/2
\]
so we can find $\lambda >1$ such that $\inf_{x \neq y } g(1-\lambda p_{xy})/\theta_{xy} > 1/2.$
If the index $q$ in the equation \eqref{eq: dim 3 lemma two close points} is large enough, say $q \geq q_0(p)$, then for all $n \geq \lceil q/2 \rceil - p$ we have $2 \log (\lambda) n \geq p \log (n+p)$ and we can bound
\begin{align*}
\sum_{n \geq \left( \lceil q/2 \rceil - p \right)_+ }
(n+p) \dots (n+1) p_{xy}^{2n}
& \leq
\sum_{n \geq \lceil q/2 \rceil - p }
(n+p)^p p_{xy}^{2n} \\
& \leq
\sum_{n \geq \lceil q/2 \rceil - p }
\left( \lambda p_{xy}\right)^{2n}
\leq
C(p) \left( \lambda p_{xy} \right)^q.
\end{align*}
If $q < q_0(p)$, we bound the sum indexed by $n$ by some constant depending on $p$. Overall, coming back to the equation \eqref{eq: dim 3 lemma two close points}, we can further bound from above the probability we are interested in by:
\begin{align*}
C'(p,t)N^{-4g/\theta_{xy}} \left( (\log N)^{q_0(p)-1} + \sum_{q = q_0(p)}^\infty \frac{1}{q!} \left( 4 \frac{g}{\theta_{xy}} \lambda p_{xy} \log N \right) \right)
\leq C''(p,t) N^{-4 \frac{g(1-\lambda p_{xy})}{\theta_{xy}}}.
\end{align*}
We have chosen $\lambda$ to make sure that the previous exponent is smaller than $-2$ which is exactly what was required.
\end{proof}

We now state and prove elementary Lemma \ref{lem: dim 3 gamma laws} (recall the definition of $f(k \to q)$ in \eqref{eq:dim3 def f}).

\begin{lemma}\label{lem: dim 3 gamma laws}
1. Poisson distribution: For $\lambda > 0$, consider $P(\lambda)$ a Poisson random variable with parameter $\lambda$. Then for all $k \geq 1$,
\begin{equation}\label{eq:lem Poisson}
\Expect{P(\lambda)^k} = \sum_{q=1}^k f(k \to q) \lambda^q.
\end{equation}

2. Gamma distribution:
For $k,p \geq 1$ and $\theta>0$, consider $\Gamma_1(p,\theta), \dots, \Gamma_k(p,\theta)$ $k$ i.i.d. Gamma random variables with shape parameter $p$ and scale parameter $\theta$, which have the law of the sum of $p$ independent exponential variables with mean $\theta$. Then for all $T >0$,
\begin{equation}\label{eq:lem Gamma}
\Prob{ \forall i=1 \dots k, \Gamma_i(p,\theta) \geq T }
\leq 
e^{-k \frac{T}{\theta}} \sum_{q=0}^{k(p-1)} \left( k \frac{T}{\theta} \right)^q / (q!).
\end{equation}
\end{lemma}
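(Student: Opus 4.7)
For part 1, the plan is to expand the power $n^k$ in terms of falling factorials. Recall the classical identity
\[
n^k = \sum_{q=1}^k f(k \to q) \, n(n-1)\cdots(n-q+1),
\]
which follows from the fact that $f(k \to q)$ (the Stirling number of the second kind) counts partitions of a $k$-element set into $q$ nonempty blocks and can be checked either combinatorially or by induction on $k$. Applying this identity with $n = P(\lambda)$ and taking expectations, the claim reduces to computing the factorial moments
\[
\Expect{ P(\lambda)(P(\lambda)-1)\cdots(P(\lambda)-q+1) } = \lambda^q,
\]
which is standard: multiply $e^{-\lambda}\lambda^n/n!$ by $n(n-1)\cdots(n-q+1)$, shift the summation index by $q$, and recognise the total mass of a Poisson law. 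Summing these contributions yields \eqref{eq:lem Poisson}.

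For part 2, I would first reduce to $\theta = 1$ by rescaling, so set $x := T/\theta$. The tail of a single Gamma$(p,1)$ variable admits the explicit formula
\[
\Prob{\Gamma_i(p,\theta) \geq T} = e^{-x} \sum_{q=0}^{p-1} \frac{x^q}{q!},
\]
obtained by repeated integration by parts of $\int_x^\infty s^{p-1} e^{-s}/(p-1)! \, ds$ (equivalently, from the Poisson–Gamma duality). By independence,
\[
\Prob{ \forall i=1 \dots k, \Gamma_i(p,\theta) \geq T } = e^{-kx} \left( \sum_{q=0}^{p-1} \frac{x^q}{q!} \right)^k.
\]
Expanding the $k$-th power and collecting terms of degree $q$ in $x$, the coefficient of $x^q$ on the left is
\[
\sum_{\substack{q_1+\cdots+q_k = q \\ 0 \leq q_i \leq p-1}} \prod_{i=1}^k \frac{1}{q_i!} \;\leq\; \sum_{q_1+\cdots+q_k = q} \prod_{i=1}^k \frac{1}{q_i!} = \frac{k^q}{q!}
\]
by the multinomial theorem. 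Since only indices $q \leq k(p-1)$ contribute, this yields
\[
\left( \sum_{q=0}^{p-1} \frac{x^q}{q!} \right)^k \leq \sum_{q=0}^{k(p-1)} \frac{(kx)^q}{q!},
\]
which is exactly \eqref{eq:lem Gamma} after reinserting $x = T/\theta$ and the prefactor $e^{-kT/\theta}$.

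Neither part involves a real obstacle: part 1 is a bookkeeping argument using Stirling numbers and Poisson factorial moments, while part 2 only combines the explicit Gamma tail formula with the multinomial inequality. The only mildly subtle point is to notice that the combinatorial identity $f(k \to q)$ in \eqref{eq:dim3 def f} really coincides with the Stirling number appearing in the powers-to-falling-factorials identity; this can be verified either from the inclusion–exclusion formula \eqref{eq:dim3 def f} directly or via the defining recursion for partitions.
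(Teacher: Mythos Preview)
Your proof is correct. Part 2 is essentially identical to the paper's argument: both compute the exact tail $e^{-kT/\theta}\bigl(\sum_{q=0}^{p-1}(T/\theta)^q/q!\bigr)^k$, expand the $k$-th power, and bound the inner multinomial sum by $k^q/q!$.

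For part 1 you take a genuinely different route. The paper works through the moment generating function: it expands $\Expect{e^{uP(\lambda)}}=\exp(\lambda(e^u-1))$ as a double series in $\lambda$ and $u$, rearranges, and reads off the coefficient of $u^k/k!$ as $\sum_{q} f(k\to q)\lambda^q$ directly from the inclusion--exclusion formula \eqref{eq:dim3 def f}. You instead invoke the Stirling-number identity $n^k=\sum_q f(k\to q)\,n(n-1)\cdots(n-q+1)$ and the Poisson factorial moments $\Expect{P(\lambda)^{\underline{q}}}=\lambda^q$. Your approach is arguably cleaner and more conceptual, since it separates the combinatorial content (powers to falling factorials) from the probabilistic content (factorial moments of Poisson). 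The paper's approach has the minor advantage of not requiring a separate verification that \eqref{eq:dim3 def f} really is the Stirling number, since the formula emerges directly from the series manipulation; you flag this point yourself in your closing remark.
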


\begin{proof}[Proof of Lemma \ref{lem: dim 3 gamma laws}]
1. Poisson distribution: The moment generating function of $P(\lambda)$ is given by: for all $u \in \R$
\begin{align*}
\Expect{e^{uP(\lambda)}}
& = \exp( \lambda(e^u - 1) )
= \sum_{q=0}^\infty \frac{\lambda^q}{q!} (e^u -1)^q
= \sum_{q=0}^\infty \frac{\lambda^q}{q!} \sum_{i=1}^q \binom{q}{i} (-1)^{q-i} e^{iu} \\
& = \sum_{q=0}^\infty \frac{\lambda^q}{q!} \sum_{i=1}^q \binom{q}{i} (-1)^{q-i} \sum_{k=0}^\infty i^k \frac{u^k}{k!} = \sum_{k=0}^\infty \frac{u^k}{k!} \sum_{q=0}^k \lambda^q f(k \to q)
\end{align*}
where $f$ is defined in \eqref{eq:dim3 def f}. This proves \eqref{eq:lem Poisson}.

2. Gamma distribution: The probability we are interested in is equal to
\begin{align*}
\Prob{ \Gamma_1(p,\theta) \geq T }^k
= e^{-k \frac{T}{\theta}} \left( \sum_{q=0}^{p-1} \left( \frac{T}{\theta} \right)^q / q! \right)^k
= e^{-k \frac{T}{\theta}} \sum_{q = 0}^{k(p-1)} \left( \frac{T}{\theta} \right)^q \sum_{\substack{0 \leq q_1, \dots, q_k \leq p-1 \\q_1 + \dots + q_k = q}} \frac{1}{q_1! \dots q_k!}.
\end{align*}
By looking at the power series of $x \mapsto (e^x)^k$ we find that
\[
\sum_{\substack{0 \leq q_1, \dots, q_k \leq p-1\\q_1 + \dots + q_k = q}} \frac{1}{q_1! \dots q_k!}
\leq
\sum_{\substack{q_1, \dots, q_k \geq 0\\q_1 + \dots + q_k = q}} \frac{1}{q_1! \dots q_k!}
=
\frac{k^q}{q!}
\]
which concludes the proof of \eqref{eq:lem Gamma}.
\end{proof}

We finish this paper by stating a lemma of measure theory. We include a proof for completeness and because we have not found any reference for this lemma.

\begin{lemma}\label{lem: decomposition function in simple functions}
Let $\phi : [-1,1]^d \times \R \to \R$ be a $\Cc^\infty$ function with compact support. Then there exists a sequence $(\phi_p)_{p \geq 1}$ of functions converging uniformly to $\phi$ such that for all $p \geq 1$,
\[
\phi_p = \sum_{i=1}^p a_i^{(p)} \mathbf{1}_{ A_i^{(p)} \times T_i^{(p)} }
\]
where $A_i^{(p)} \in \Bc ([-1,1]^d)$ with the Lebesgue measure of $\bar{A}^{(p)}_i \backslash (A_i^{(p)})^\circ$ vanishing, $T_i^{(p)} \in \Bc( \R )$ with $\inf T_i^{(p)} > - \infty$ and $a_i^{(p)} \in \C$.
\end{lemma}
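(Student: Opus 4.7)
The plan is to approximate $\phi$ uniformly by simple functions built from a fine grid of axis-aligned boxes. Since $\phi$ has compact support in $[-1,1]^d \times \R$, I fix $M > 0$ such that $\phi \equiv 0$ outside $[-1,1]^d \times [-M, M]$, and note that $\phi$ is then uniformly continuous on $[-1,1]^d \times \R$ with some modulus of continuity $\omega(h) \to 0$ as $h \to 0$.

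For each integer $n \geq 1$, I would partition $[-1,1]^d$ into $n^d$ half-open cubes of side $2/n$ and partition $[-M, M]$ into $n$ half-open intervals of length $2M/n$. Taking products yields a partition of the box $[-1,1]^d \times [-M, M]$ into $n^{d+1}$ rectangles $R = A \times T$. On each such rectangle I pick a sample point $(x_R, t_R) \in R$ and define
\[
\psi_n := \sum_{R} \phi(x_R, t_R) \, \mathbf{1}_{R}.
\]
By uniform continuity, $\sup_{(x,t)} | \psi_n(x,t) - \phi(x,t) |$ is bounded by $\omega(\mathrm{diam}(R))$ with $\mathrm{diam}(R) = O(1/n)$, so $\psi_n \to \phi$ uniformly. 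Outside $[-1,1]^d \times [-M, M]$ both $\phi$ and $\psi_n$ vanish, so no difficulty arises there.

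The remaining verification is that the chosen rectangles satisfy the regularity conditions required by the statement. Each spatial factor $A$ is a Cartesian product of half-open intervals; its topological boundary is contained in a finite union of affine hyperplanes and therefore has vanishing Lebesgue measure, so $\bar{A} \setminus A^\circ$ is Lebesgue-null. Each temporal factor $T \subset [-M, M]$ evidently satisfies $\inf T \geq -M > -\infty$. Finally, to index the approximations by $p$ with at most $p$ summands, I would set $n(p) := \lfloor p^{1/(d+1)} \rfloor$, so that $n(p)^{d+1} \leq p$ and $n(p) \to \infty$, and put $\phi_p := \psi_{n(p)}$, padding the sum with zero-coefficient terms (on any admissible sets of the above form) if fewer than $p$ rectangles are used. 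There is no genuine obstacle here; the only input beyond uniform continuity is the elementary observation that the boundary of a coordinate box has Lebesgue measure zero.
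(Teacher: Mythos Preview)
Your argument is correct and takes a genuinely different, more elementary route than the paper. The paper first uses uniform convergence of the Fourier series of $\phi$ to replace $\phi$ by a finite sum of products $e^{ik_x\cdot x}e^{ik_t t}\mathbf{1}_{(t_0,\infty)}$, thereby separating the $x$ and $t$ variables; it then splits each exponential into positive and negative real parts and approximates each resulting nonnegative one-variable function $\varphi$ by simple functions built from level sets $\varphi^{-1}([k2^{-p}-\xi,(k+1)2^{-p}-\xi))$, where the shift $\xi$ is chosen so that the boundaries of these level sets are Lebesgue-null. Your approach bypasses all of this: you partition the domain directly into axis-aligned boxes and sample $\phi$, using only uniform continuity. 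The ``null boundary'' requirement is then automatic (boundaries of boxes are finite unions of hyperplanes), with no delicate choice of shift needed. The paper's detour through Fourier series buys an explicit tensor-product structure in the approximants, but for the lemma as stated your grid argument is cleaner and entirely sufficient.

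One cosmetic remark: since $\phi$ need not vanish on the boundary of $[-1,1]^d$ (the compact-support hypothesis constrains only the $t$-direction), you should make sure your ``half-open cubes'' actually cover all of $[-1,1]^d$, e.g.\ by letting the last interval in each coordinate be closed on the right. This is a trivial adjustment and does not affect the null-boundary property.
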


\begin{proof}
Let $\eps>0$. As $\phi$ is $\Cc^\infty$ with compact support, the Fourier series of $\phi$ converges uniformly. We can thus find $K \geq 1$, $c_{k_x, k_t} \in \C, k_x \in \Z^d, k_t \in \Z$ and $t_0 \in \R$ such that the uniform norm of
\[
\phi - \sum_{ \substack{ k_x \in \Z^d, \norme{k_x} \leq K \\ k_t \in \Z, \abs{k_t} \leq K} } c_{k_x, k_t} e^{i k_x \cdot x} e^{i k_t \cdot t} \mathbf{1}_{(t_0, \infty)}
\]
is smaller than $\eps$. This procedure separates the variables $x$ and $t$. Now, writing $u_+$ and $u_-$ the positive and negative parts of a real $u$, we decompose
\[
e^{i k_x \cdot x} = \left( \cos(k_x \cdot x) \right)_+ - \left( \cos(k_x \cdot x) \right)_- + i \left( \sin(k_x \cdot x) \right)_+ - i \left( \sin(k_x \cdot x) \right)_-.
\]
Hence, we conclude this lemma by decomposing these four previous functions into sums of simple functions and we do the same thing for the variable $t$. We are going to detail this. In particular, we are going to explain how to ensure that the boundary of the Borel sets linked to the simple functions have zero Lebesgue measure. Let $\varphi : \R^d \rightarrow [0,\infty)$ be a continuous bounded function. We take $\xi > 0$ such that the Lebesgue measure of $\varphi^{-1} \left( \left\{ k2^{-p} - \xi, k \geq 1, p \geq 1 \right\} \right)$ vanishes. It is possible because the set of non suitable $\xi$'s is at most countable. Now we introduce 
\[
\psi_p := \sum_{k=0}^{p2^p} k 2^{-p} \mathbf{1}_{A_{p,k}}
\mathrm{~where~}
A_{p,k} = \varphi^{-1}\left( \left[ k 2^{-p} - \xi, (k+1)2^{-p} - \xi \right) \right).
\]
Thanks to our choice of $\xi$, the Lebesgue measure of $\bar{A}_{p,k} \backslash A_{p,k}^\circ$  vanishes. Also, since $\varphi + \xi$ is positive and bounded, $0 \leq (\varphi + \xi) - \psi_p \leq 2^{-p}$ for all $p$ large enough. We have thus uniformly approximated $\varphi$ by simple functions with Borel sets of the form we desired. This concludes the proof of the lemma.
\end{proof}

\paragraph*{Acknowledgements}

I am grateful to Nathanaël Berestycki for suggesting me these problems and for all the time he dedicated to me. I am also grateful to William Da Silva with whom I started this project for stimulating discussions, to Jay Rosen for bringing Nathanaël's attention to the Eisenbaum's isomorphism, to Nicolas Curien for a useful comment which substantially improved the statement of Theorem \ref{th: dim 3 convergence measures} and to Victor Dagard for proofreading a first version of the article. Finally, I would like to thank an anonymous referee for their careful reading of the paper.


\bibliographystyle{amsplain}



\providecommand{\bysame}{\leavevmode\hbox to3em{\hrulefill}\thinspace}
\providecommand{\MR}{\relax\ifhmode\unskip\space\fi MR }
\providecommand{\MRhref}[2]{%
  \href{http://www.ams.org/mathscinet-getitem?mr=#1}{#2}
}
\providecommand{\href}[2]{#2}


\end{document}